\DeclareMathOperator{\diag}{diag}
\DeclareMathOperator{\Ric}{Ric}
\DeclareMathOperator{\Ricci}{Ric}
\def\upi{{ {}^i } }
\def\uph{{ {}^h } }
\def\up#1{{ {}^{#1} } }
\def\boundary{\partial}
\def\R{\mathbb{R}}
\def\N{\mathbb{N}}
\def\of{\circ}
\def\timess{×}
\def\gradh{    {{}^{{}^h} \! \nabla}    }
\def\fracp#1#2{\frac{\partial #1}{\partial #2}}
\def\theta{\vartheta}
\def\partt{\frac{d}{d t}}
\def\phi{\varphi}
\def\parti#1#2{\frac{\partial #1 } {\partial #2} }
\def\de{\delta}
\def\be{\beta}
\def\al{\alpha}
\def\ga{\gamma}
\def\epsilon{\varepsilon}
\long\def\umbruch{{\displaybreak[1]}}
\def\ol#1{\overline{#1}}
\def\gradsquared{\sum\limits_{i,\,j,\,k}
\left(\gradh_k g_{ij}\right)^2}
\def\dt{\frac{d}{dt}}
\def\delt{\frac{\partial}{\partial t}}
\def\ep{\epsilon}
\def\grad{\nabla}
\def\ti{\tilde}
\def\M{\mathcal M}
\def\Mloc{\mathcal M_{\text{\it{loc}}}}
\mathchardef\ordinarycolon\mathcode`\:
\newtheorem{theorem}{Theorem}[section]
\newtheorem{lemma}[theorem]{Lemma}
\newtheorem{corollary}[theorem]{Corollary}
\theoremstyle{definition}
\newtheorem{remark}[theorem]{{Remark}}
\newtheorem{definition}[theorem]{{Definition}}
\numberwithin{equation}{section}
\begin{document}

\title{Stability of Euclidean space under Ricci flow}


\author{Oliver C. Schnürer}
\address{Oliver Schnürer:
  Freie Universität Berlin, Arnimallee 6, 
  14195 Berlin, Germany}
\curraddr{}
\email{Oliver.Schnuerer\fuhome}


\author{Felix Schulze}
\address{Felix Schulze: 
  Freie Universität Berlin, Arnimallee 6, 
  14195 Berlin, Germany}
\curraddr{}
\email{Felix.Schulze\fuhome}
\def\fuhome{@math.fu-berlin.de}

\author{Miles Simon}
\address{Miles Simon: Universität Freiburg, Eckerstraß{}e 1,
  79104 Freiburg i. Br., Germany}
\curraddr{}
\email{msimon@gmx.de}

\subjclass[2000]{53C44, 35B35}
\date{June 2007.}

\dedicatory{}

\keywords{Stability, Ricci flow.}

\begin{abstract}
  We study the Ricci flow for initial metrics which are $C^0$ small
  perturbations of the Euclidean metric on $\R^n$. In the case that
  this metric is asymptotically Euclidean, we show that a Ricci
  harmonic map heat flow exists for all times, and converges uniformly
  to the Euclidean metric as time approaches infinity. In proving this
  stability result, we introduce a monotone integral quantity which
  measures the deviation of the evolving metric from the Euclidean
  metric. We also investigate the convergence of the diffeomorphisms
  relating Ricci harmonic map heat flow to Ricci flow.
\end{abstract}

\maketitle

\section{Introduction}
In this paper we investigate the evolution of a family of complete
non-compact Riemannian manifolds $\left(\R^n,g(t)\right)$ under 
Ricci flow
\begin{equation}\label{ricci flow}\tag{RF}
\begin{cases}
\,\delt g(t)=-2\Ric(t)&\text{in }\R^n×(0,\infty),\\
\,g(t)\to g_0&\text{in }C^0_{\text{\it{loc}}}\text{ as }t\searrow0,
\end{cases}
\end{equation}
where $g_0$ is a given initial metric on $\R^n$. We 
study the long-term behavior as $t\to\infty$ of solutions
to \eqref{ricci flow} for initial metrics $g_0$ which are
$C^0$-close to the standard Euclidean metric $h$, 
in standard coordinates $h_{ij}=\delta_{ij}$. For analytic 
reasons, it is convenient to study the Ricci harmonic map heat
flow which is a variant of the Ricci-DeTurck flow 
\begin{equation}\label{DeTurck flow}
\begin{cases}
\,\delt g_{ij}(x,t)=-2R_{ij}(x,t)+\nabla_iV_j+\nabla_jV_i&
\text{in }\R^n\timess[0,\infty),\\
\,g_{ij}(t)\to(g_0)_{ij}&\text{in }C^0_{\it{loc}}\left(\R^n\right)
\text{ as }t\searrow0,
\end{cases}\end{equation}
where
$V_i=g_{ik}\left({}^g\Gamma^k_{rs}-{}^h\Gamma^k_{rs}\right)g^{rs}$,
for a flat metric $h$, $V_i=g_{ik}{}^g\Gamma^k_{rs}g^{rs}$.  If a
family of metrics $\left(g(t)\right)_{t\in(0,\,\infty)}$ solves
\eqref{DeTurck flow}, we call it a solution to the $h$-flow with
initial metric $g_0$.  Precise definitions are to be found in Section
\ref{nota sec}.  Note that the $h$-flow \eqref{DeTurck flow} and the
Ricci flow \eqref{ricci flow} are equivalent up to diffeomorphisms
\cite{ShiJDG1989} in the case that the initial metric is smooth, and
$M$ is compact.
\begin{definition}
Let $g_1$ and $g_2$ denote two Riemannian metrics on a given
manifold.  We say that $g_1$ is $\delta$-fair to $g_2$, if
$$\delta^{-1}g_2\leq g_1\leq\delta\,g_2\,.$$
We call $g_1$ $\epsilon$-close to $g_2$, if $g_1$ is
$(1+\epsilon)$-fair to $g_2$.  
\end{definition}

We denote with $\mathcal M^k(\R^n,I)$ the space of families
$(g(t))_{t\in I}$ of sections in the space of Riemannian metrics on
$\R^n$ which are $C^k$ on $\R^n\times I$. Similarly, we define
$\M^\infty$, $\Mloc^k$ and use $\mathcal M^k(\R^n)$ if the metric does
not depend on $t$. We wish to point out that we use $C^k$ on
non-compact sets to denote the space, where derivatives of order up to
$k$ are in $L^\infty$. We also use $C^k_{\text{\it{loc}}}$.

Our existence result reads as follows
\begin{theorem}\label{thm one}
For every $\epsilon>0$ there exists
$0<\epsilon_0(\epsilon,n)\leq \epsilon,$ such
that the following is true: If $g_0\in\M^0(\R^n)$ is a Riemannian
metric on $\R^n$ which is $\epsilon_0$-close to the standard 
metric $h=\delta$, then there exists
a solution $g\in\M^\infty\big(\R^n,
(0,\infty)\big)\cap\Mloc^0\big(\R^n,[0,\infty)\big)$ to 
\eqref{DeTurck flow} such that $g(t)$ is $\epsilon$-close
to $h$ for all $t$. As $t\to\infty$, the metrics $g(t)$ converge 
subsequentially in $\Mloc^\infty\big(\R^n\big)$
to a complete flat metric.
\end{theorem}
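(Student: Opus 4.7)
The plan is to prove Theorem \ref{thm one} in three stages: short-time existence by approximation, a priori preservation of closeness for all time, and extraction of a flat subsequential limit using the monotone integral quantity announced in the abstract. First I would mollify the $C^0$ initial metric $g_0$ to obtain smooth approximants $g_0^{(k)}$ that are still $\epsilon_0$-close to $h$. For each $k$, since \eqref{DeTurck flow} is uniformly parabolic whenever $g$ is uniformly comparable to $h$, standard DeTurck theory produces a smooth short-time solution $g^{(k)}(t)$ together with Shi-type interior estimates
\begin{equation*}
|\grad^\ell g^{(k)}|(x,t) \leq C(\ell,n,\epsilon)\, t^{-\ell/2}
\end{equation*}
valid on any time interval on which $g^{(k)}(t)$ remains $\epsilon$-close to $h$.

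The crucial step is to propagate closeness in time. I would derive a heat-type inequality for the scalar deviation $|g^{(k)}-h|_h^2$ along \eqref{DeTurck flow}, whose nonlinear corrections are dominated by the a priori closeness itself, and then apply a maximum principle on $\R^n$ (with standard barrier functions to handle noncompactness) to obtain
\begin{equation*}
\sup_{\R^n \times [0,\infty)} |g^{(k)}-h|_h \leq C(n)\,\epsilon_0.
\end{equation*}
Choosing $\epsilon_0(\epsilon,n)$ small enough makes the right-hand side less than $\epsilon$, and a continuation argument yields global existence for each $k$. The Shi bounds give uniform-in-$k$ derivative control on compact subsets of $\R^n \times (0,\infty)$, so Arzel\`a--Ascoli extracts a subsequential limit $g \in \M^\infty\bigl(\R^n,(0,\infty)\bigr)$ which remains $\epsilon$-close to $h$, with $g(t) \to g_0$ in $\Mloc^0$ as $t \searrow 0$.

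For the behavior as $t \to \infty$ I would use a monotone integral quantity, schematically $I(t) = \int_{\R^n} \Phi\bigl(g(t)-h\bigr)\, d\mu_h$, with $\Phi$ chosen so that along \eqref{DeTurck flow} one has $\tfrac{d}{dt} I(t) \leq -D(t)$ for a nonnegative dissipation $D$ controlling $\int |\grad g|^2\, d\mu_h$ up to lower-order terms. Integrability of $D$ in $t$ forces $|\Ric(g(t_j))| \to 0$ locally for some $t_j \to \infty$; a further Arzel\`a--Ascoli extraction then yields a limit $g_\infty \in \Mloc^\infty(\R^n)$ which is complete, Ricci flat, and $\epsilon$-close to $h$, and hence flat. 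The hardest step will be establishing the monotonicity of $I$: the required integration-by-parts identities demand adequate decay of $g-h$ and its derivatives at spatial infinity, which under the merely $C^0$ hypothesis on $g_0$ is not automatic, so one will likely need a cutoff argument together with the positive-time smoothing supplied by the Shi estimates and carefully weighted norms.
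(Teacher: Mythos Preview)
Your existence and closeness-preservation stages are broadly sound and parallel the paper's strategy, though the paper executes them differently: rather than applying a maximum principle directly on $\R^n$ with barriers, it solves Dirichlet problems on balls $B_i$ (where closeness is preserved via the maximum principle for $\phi_m+\psi_m-2n$ on a bounded domain), obtains uniform interior estimates, and passes to the limit $i\to\infty$. Your choice of $|g-h|_h^2$ in place of $\phi_m+\psi_m-2n$ would also work for small $\epsilon$, since the bad $|g-h|\cdot|\nabla g|^2$ error is absorbed by the good $-|\nabla g|^2$ term; the paper's quantity is chosen because the absorption works for $m$ large without a smallness assumption on the quadratic error.

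The real problem is Stage 3. You propose to extract the flat limit from the monotone integral $I(t)$, and you correctly flag that the integration by parts needs decay of $g-h$ at infinity. Under the hypotheses of Theorem \ref{thm one} there is \emph{no} such decay: $g_0$ is merely $\epsilon_0$-close to $h$, with no asymptotic condition whatsoever, so $I(t)$ can be identically infinite (e.g.\ $g_0=(1+\tfrac{\epsilon_0}{2})h$). No cutoff or weighted-norm trick will rescue a globally infinite quantity. The monotone integral is the tool for Theorem \ref{thm conv} and Theorem \ref{thm three}, where decay is assumed; it plays no role in Theorem \ref{thm one}.

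The paper's route to the flat subsequential limit is far simpler and you already have the ingredients in hand: the Shi-type estimates $\bigl|\gradh^i g\bigr|\le c(i,n)\,t^{-i/2}$ that you wrote down in Stage 1 hold globally for all $t>0$ once closeness is preserved. In particular $|\Riem(g(t))|\le c\,t^{-1}\to 0$, so by Arzel\`a--Ascoli any $C^\infty_{\text{\it loc}}$ subsequential limit as $t\to\infty$ is automatically flat, complete, and $\epsilon$-close to $h$. No integral monotonicity is needed.
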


In this situation, our main theorem addresses convergence
to the Euclidean background metric.
\begin{theorem}\label{thm conv}
  Let $g\in\M^\infty\big(\R^n,
  (0,\infty)\big)\cap\Mloc^0\big(\R^n,[0,\infty)\big)$
  be a solution to \eqref{DeTurck flow}.  Assume that $g(t)$ is
  $\tilde{\epsilon}(n)$-close to the standard metric $h$ for all
  $t\geq 0$, for some $\tilde{\epsilon}(n)$ chosen sufficiently small,
  and that $g_0$ is $\epsilon(r)$-close to $h$ on $\R^n\setminus
  B_r(0)$ with $\epsilon(r)\to0$ as $r\to\infty$. Then
$$g(t)\to h\quad\text{in }C^\infty\text{ as }t\to\infty.$$
That is:
 $$\sup_{\R^n} | g(t)- h | \to 0$$ as $t \to \infty$, and 
 $$\sup_{\R^n} \left| {\uph \grad}^k g(t) \right| \to 0$$ as $t \to
 \infty$ for all 
 $k \in \{1,2 \ldots \}.$
\end{theorem}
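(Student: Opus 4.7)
The plan is to combine uniform parabolic regularity, the monotone integral quantity introduced earlier in the paper, and a translation/compactness argument. The decay of $g_0$ at infinity is used only to guarantee that the monotone quantity is initially finite; once this is in hand, a blow-up contradiction takes care of uniform decay.

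\textbf{Step 1 (smoothing estimates).} Because $g(t)$ is uniformly $\tilde\epsilon(n)$-close to $h$ and solves the quasilinear strictly parabolic system \eqref{DeTurck flow}, Shi-type interior estimates (which are available in the present setting since the background $h$ is flat and the principal symbol is uniformly elliptic) give
\begin{equation*}
\sup_{\R^n}\left|\uph\grad^{k} g(\,\cdot\,,t)\right|\leq C_k\quad\text{for all }t\geq 1,\ k\in\N.
\end{equation*}
In particular the family $\{g(\,\cdot\,,t)\}_{t\geq 1}$ is precompact in $C^{\infty}_{\text{\it loc}}(\R^n)$, and any $C^{0}_{\text{\it loc}}$-convergence automatically upgrades to $C^{\infty}_{\text{\it loc}}$-convergence.

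\textbf{Step 2 (monotone quantity and initial finiteness).} Apply the monotone integral quantity from the paper (morally of the form $E(t)=\int_{\R^n}\Phi\left(g(t)-h\right)\,d\mu_{g(t)}$, with differential inequality
$\dot E(t)\leq -c\int_{\R^n}|\uph\grad(g(t)-h)|^2\,d\mu_{g(t)}$). The $\epsilon(r)$-decay hypothesis on $g_0$ ensures that, after possibly multiplying by cut-offs and absorbing the $L^\infty$-small core using $\tilde\epsilon$-closeness, $E(0)$ is finite. Consequently $E(t)\searrow E^\ast\geq 0$ and
\begin{equation*}
\int_{0}^{\infty}\int_{\R^n}\left|\uph\grad(g(t)-h)\right|^2 d\mu_{g(t)}\,dt<\infty,
\end{equation*}
and for every $T<\infty$ one has $E(t+T)-E(t)\to 0$ as $t\to\infty$.

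\textbf{Step 3 (blow-up contradiction for $C^0$-convergence).} Suppose $\sup_{\R^n}|g(t)-h|\not\to 0$. Pick $t_k\to\infty$ and $x_k\in\R^n$ with $|g(t_k,x_k)-h|\geq c>0$, and consider the translates $g_k(y,s):=g(x_k+y,t_k+s)$ for $s\in[-1,1]$. By Step 1 a subsequence converges in $C^\infty_{\text{\it loc}}(\R^n\times[-1,1])$ to an eternal solution $g_\infty$ of \eqref{DeTurck flow} that is $\tilde\epsilon$-close to $h$ and satisfies $|g_\infty(0,0)-h|\geq c$. From $E(t_k+1)-E(t_k-1)\to 0$ and the differential inequality in Step 2, together with the translation invariance of the Lebesgue measure (and the fact that $d\mu_g$ is uniformly comparable to $dx$ by $\tilde\epsilon$-closeness), passage to the limit yields
\begin{equation*}
\int_{-1}^{1}\int_{B_R(0)}\left|\uph\grad(g_\infty-h)\right|^2\,dy\,ds=0\quad\text{for every }R>0,
\end{equation*}
so $g_\infty-h$ is covariantly constant with respect to $h$, hence constant in Euclidean coordinates. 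On the other hand, translation invariance also gives
\begin{equation*}
\int_{B_R(0)}\left|g_\infty(y,s)-h\right|^2 dy\leq \liminf_{k\to\infty}\int_{B_R(x_k)}\left|g(t_k+s,\,\cdot\,)-h\right|^2 dy\leq C\,E^\ast<\infty
\end{equation*}
for every $R$, so $|g_\infty-h|\in L^2(\R^n)$. A nonzero constant symmetric tensor on $\R^n$ is not $L^2$, so $g_\infty=h$, contradicting $|g_\infty(0,0)-h|\geq c$. Hence $\sup_{\R^n}|g(t)-h|\to 0$.

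\textbf{Step 4 ($C^\infty$-convergence).} Interpolating the just-established uniform $C^0$-decay against the uniform bounds $\sup_{\R^n}|\uph\grad^{k+1}g(t)|\leq C_{k+1}$ from Step 1 (or repeating the blow-up argument with the derivatives of $g-h$ in place of $g-h$) gives $\sup_{\R^n}|\uph\grad^k g(t)|\to 0$ for every $k\geq 1$.

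The most delicate point is Step 3: one must justify that both the dissipation $\int|\uph\grad(g-h)|^2\,d\mu$ and the finiteness of $E$ survive passage to the translated limit, which requires that the weight $d\mu_{g(t)}$ be controlled uniformly in $x$ and $t$ (this is where the global $\tilde\epsilon$-closeness is essential) and that the monotone quantity of Section\,\ref{nota sec} really does reduce, up to harmless lower-order terms, to a coercive $L^2$-functional of $g-h$ with a Dirichlet-type dissipation.
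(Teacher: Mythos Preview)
There is a genuine gap in Step~2, and it propagates through Step~3. You assert that the qualitative decay $\epsilon(r)\to 0$ makes $E(0)<\infty$, but this is false in general: a metric with $|g_0-h|\sim 1/\log|x|$ at infinity satisfies $\epsilon(r)\to 0$ yet has $\int_{\R^n}|g_0-h|^{2p}\,dx=\infty$ for every $p$. The hand-waving about ``multiplying by cut-offs and absorbing the $L^\infty$-small core'' does not fix this; spatial cut-offs would destroy the monotonicity, and there is no decay rate to trade against. Once $E(0)=\infty$ is possible, the $L^2$-argument in Step~3 (ruling out a nonzero constant limit via $|g_\infty-h|\in L^2(\R^n)$) collapses, and with it the whole contradiction.

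The paper handles exactly this issue by working not with $\int(\phi_m+\psi_m-2n)^p$ but with the \emph{shifted} quantity
\[
I_\delta^{m,p}(t)=\tfrac1p\int_{\R^n}(\phi_m+\psi_m-2n-\delta)_+^p\,dx,
\]
which is finite at $t=0$ for every $\delta>0$ precisely because $\epsilon(r)\to 0$ forces $\{\phi_m+\psi_m-2n>\delta\}$ to be compact. The paper's contradiction argument (Lemma~\ref{lambda decay no rate}) is also structurally different from yours: it uses no compactness or blow-up and no dissipation term. Instead it exploits the quantitative gradient estimate $|\gradh g|\le c_1/\sqrt t$ from Corollary~\ref{int est} directly: if $|\lambda_i(x_k,t_k)-1|\ge 2\zeta$, then $|\lambda_i(\cdot,t_k)-1|\ge\zeta$ on the ball $B_{c_1^{-1}\zeta\sqrt{t_k}}(x_k)$, whose volume grows like $t_k^{n/2}$, forcing $I_\delta^{m,p}(t_k)\to\infty$ and contradicting monotonicity. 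For the higher derivatives the paper does not interpolate at all: Corollary~\ref{int est} already gives $|\gradh^k g|\le c_k/t^{k/2}\to 0$.
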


In this paper we will be concerned with the geometric quantities
\begin{equation}\label{phi def eqn}
\phi_m:=g^{a_1b_1}h_{b_1a_2}g^{a_2b_2}h_{b_2a_3}\cdots
g^{a_mb_m}h_{b_ma_1}
\end{equation}
and 
\begin{equation}\label{psi def eqn}
\psi_m:=g_{a_1b_1}h^{b_1a_2}g_{a_2b_2}h^{b_2a_3}\cdots
g_{a_mb_m}h^{b_ma_1}.
\end{equation}
If $(\lambda_i)$ are the eigenvalues of $(g_{ij})$ with respect
to $(h_{ij})$, we get
$$\phi_m=\sum\limits_{i=1}^n\frac1{\lambda_i^m}\quad\text{and}
\quad\psi_m=\sum\limits_{i=1}^n\lambda_i^m.$$
In particular
\begin{equation}\label{phi psi 2n sum}
\phi_m+\psi_m-2n=\sum\limits_{i=1}^n\left(\frac1{\lambda_i^m}
+\lambda_i^m-2\right)=\sum\limits_{i=1}^n
\frac1{\lambda_i^m}\left(\lambda_i^m-1\right)^2\geq0
\end{equation}
is non-negative and vanishes precisely when $\lambda_i=1$ for
all $i\in\{1,\ldots,n\}$. 

\begin{theorem}\label{thm two}
  Let $n\geq3$ and $g\in
  \Mloc^\infty\big(\R^n,[0,\infty)\big)$ be a solution
  to \eqref{DeTurck flow}, satisfying all the conditions of Theorem
  \ref{thm conv}. Then there exist $m=m(n)\in\N$ and
  $\tilde\epsilon(n)>0$, such that the following holds.  If
  $1\leq p<\frac n2$ can be chosen such that initially
\begin{equation}\label{int p cond}
\int\limits_{\R^n}(\phi_m+\psi_m-2n)^p<\infty,
\end{equation}
then there exists a smooth family $(\phi_t)_{t\geq 0}$ of
diffeomorphisms of $\R^n$, $\phi_0=\rm{id}_{\R^n}$, such that for
$\tilde{g}(t):=\phi_t^*g(t)$ the family $(\tilde{g}(t))_{t\geq 0}$ is
a smooth solution to \eqref{ricci flow} satisfying
$$\tilde{g}(t)\to(\phi_\infty)^*h\quad\text{in } 
\M^\infty\big(\R^n\big)\text{ as }t\to\infty$$ for some smooth
diffeomorphism $\phi_\infty$ of $\R^n$ which satisfies
$\phi_t\to\phi_\infty$ in $C^\infty\left(\R^n,\R^n\right)$ as
$t\to\infty$ and
$$|\phi_\infty(x)-x|\to0\quad\text{as }|x|\to\infty.$$
\end{theorem}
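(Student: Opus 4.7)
The plan is to obtain $\phi_t$ by integrating the time-dependent vector field $-V$, where $V^i=g^{ij}V_j$ for $V_j$ the vector field appearing in \eqref{DeTurck flow}; concretely, one solves the ODE
\begin{equation*}
\tfrac{\partial}{\partial t}\phi_t(x)=-V(\phi_t(x),t),\qquad\phi_0(x)=x.
\end{equation*}
Since $g\in\Mloc^\infty(\R^n,[0,\infty))$ by hypothesis, $V$ is smooth up to $t=0$, and the standard DeTurck computation shows that $\tilde g(t):=\phi_t^*g(t)$ solves \eqref{ricci flow}. Every claim of the theorem then reduces to suitable estimates on $V$: an $L^1_t L^\infty_x$-bound over $[0,\infty)$ for global existence and for a $C^0$ limit $\phi_\infty$; analogous bounds on $DV$ and its higher spatial derivatives for $C^\infty$ convergence and the diffeomorphism property (via Hadamard's global inverse theorem); and uniform-in-$t$ spatial decay of $V$ for the asymptotic condition $|\phi_\infty(x)-x|\to 0$.

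\textbf{$L^p$-to-$L^\infty$ smoothing.} The role of hypothesis \eqref{int p cond} is to produce quantitative temporal decay of $V$ via Moser-type smoothing. Let $u:=\phi_m+\psi_m-2n\geq 0$; by \eqref{phi psi 2n sum} this vanishes exactly on $\{g=h\}$ and is pointwise comparable to a positive power of $|g-h|$ whenever $g$ is $\tilde\epsilon$-close to $h$. The first step is to derive from the $h$-flow a parabolic differential inequality of the form
\begin{equation*}
\partial_t u \,\leq\, \Delta_g u \,+\, (\text{zero-order terms controllable under }p\text{-integration}),
\end{equation*}
which is presumably the monotone-integral machinery alluded to in the abstract and developed for Theorems~\ref{thm one}/\ref{thm conv}. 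Testing with $u^{p-1}$, integrating by parts, and running Moser iteration on $\R^n$ yields a heat-kernel-style estimate
\begin{equation*}
\|u(\cdot,t)\|_{L^\infty(\R^n)}\,\leq\, C(n,p,m)\,t^{-n/(2p)}\,\|u(\cdot,0)\|_{L^p(\R^n)}^{\kappa}\qquad(t\geq 1),
\end{equation*}
in which the restriction $p<n/2$ is precisely what forces $n/(2p)>1$, rendering this rate integrable over $[1,\infty)$. Interior parabolic estimates applied to \eqref{DeTurck flow} (available because $g$ stays uniformly $\tilde\epsilon$-close to $h$) propagate the decay of $u$ to $\grad g$ and hence to $V$; choosing $m=m(n)$ sufficiently large ensures that the final $t$-exponent still exceeds $1$. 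On $[0,1]$ the integral of $\|V\|_{L^\infty}$ is finite directly from $g\in\Mloc^\infty$.

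\textbf{Diffeomorphism property, convergence, and behavior at infinity.} With $\int_0^\infty\|V(\cdot,t)\|_{L^\infty}\,dt<\infty$ and analogous bounds on derivatives of $V$ (by the same smoothing combined with higher parabolic regularity), standard ODE theory produces a smooth family $\phi_t$ for all $t\geq 0$ and a limit $\phi_\infty\in C^\infty(\R^n,\R^n)$ with $\phi_t\to\phi_\infty$ in $C^\infty$; smallness of $D\phi_t-\Iden$ together with Hadamard's theorem makes each $\phi_t$, and $\phi_\infty$, a global diffeomorphism of $\R^n$. Combining $\phi_t\to\phi_\infty$ in $C^\infty$ with $g(t)\to h$ in $C^\infty$ from Theorem~\ref{thm conv} then gives $\tilde g(t)\to\phi_\infty^* h$ in $\M^\infty$. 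For the asymptotic condition, the spatial decay $\epsilon(r)\to 0$ of $g_0-h$ propagates under the flow (by the locality of the parabolic equation and the uniform-in-$t$ estimates of Theorem~\ref{thm conv}) to uniform-in-$t$ spatial decay of $V$; since $|\phi_t(x)-x|$ is uniformly bounded, for $|x|$ large the trajectory $\phi_t(x)$ remains in a region where $V$ is small, so
\begin{equation*}
|\phi_\infty(x)-x|\,=\,\Bigl|\int_0^\infty V(\phi_t(x),t)\,dt\Bigr|\,\longrightarrow\,0\quad\text{as }|x|\to\infty.
\end{equation*}
The main obstacle is the sharp $L^p\to L^\infty$ Moser smoothing with rate $t^{-n/(2p)}$ for the quasilinear parabolic quantity $u$ on all of $\R^n$, together with the coordinated choice of $m$ and $p$: large $m$ is needed both to make the differential inequality for $u^p$ tractable and to ensure that, after losing derivatives to estimate $V$ from $u$, the final decay still beats $t^{-1}$.
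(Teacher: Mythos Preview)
Your outline is correct and follows the same architecture as the paper---integrate the DeTurck vector field to produce $\phi_t$, establish a time-integrable decay rate for $V$ from the integral hypothesis \eqref{int p cond}, and propagate spatial decay for the asymptotic statement---but the central decay estimate is obtained by a genuinely different route. The paper does \emph{not} run Moser iteration. Instead it combines the monotonicity of $I^{m,p}_0(t)$ (Theorem~\ref{int delta thm}) with the pointwise gradient bound $|\gradh g|\le c/\sqrt t$ via a ball argument: if $|\lambda_i(x_0,t)-1|=2\zeta$, then on $B_{c^{-1}\zeta\sqrt t}(x_0)$ one still has $\phi_m+\psi_m-2n\gtrsim\zeta^2$, so $I^{m,p}_0(0)\gtrsim\zeta^{2p+n}t^{n/2}$, giving $|g-h|\lesssim t^{-n/(2(2p+n))}$ (Lemma~\ref{lambda decay with rate}). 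This is then bootstrapped by alternating interpolation $\|\nabla g\|_\infty^2\le c\|g-h\|_\infty\|\nabla^2 g\|_\infty$ with a rerun of the ball argument, converging to $|g-h|\lesssim t^{-n/(4p)+\zeta}$ and $|\gradh g|\lesssim t^{-1/2-n/(4p)+\zeta}$ (Lemma~\ref{better decay estimate}). Your Moser/heat-kernel smoothing $\|u(t)\|_\infty\lesssim t^{-n/(2p)}\|u_0\|_p$ reaches the same endpoint in one shot, without the iteration; since $u\sim|g-h|^2$ this is the same exponent. For the diffeomorphism property of $\phi_\infty$ the paper also differs: rather than Hadamard, it observes that $l(t)=\phi_t^*g(t)$ solves Ricci flow with ${}^l|\partial_t l|=2\,{}^g|\Ric(g)|\le c/t^{3/2}$, hence $l(t)$ converges to a nondegenerate metric, forcing $\det D\phi_t$ to stay bounded below; then the inverses $F_t=\phi_t^{-1}$ are shown to converge and $\phi_\infty\circ F_\infty=F_\infty\circ\phi_\infty=\mathrm{id}$.

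Two minor corrections to your write-up. First, the integrability of $\|V(\cdot,t)\|_{L^\infty}$ on $[0,1]$ does \emph{not} follow ``directly from $g\in\Mloc^\infty$'', which only gives local bounds; one needs the global estimate $|\gradh g|\le c/\sqrt t$ of Corollary~\ref{int est} (and the paper in fact constructs $\phi_t$ by a cutoff-and-limit argument, Lemma~\ref{diffeo lem}). Second, choosing $m$ large plays no role in pushing the $t$-exponent past $1$: large $m=m(n)$ is needed only so that the gradient terms in the evolution of $\phi_m+\psi_m$ have a good sign (absorbing the $c(n)m(1+\epsilon)^{m+1}$ error), while the condition $\tfrac12+\tfrac{n}{4p}>1$ is exactly $p<\tfrac n2$.
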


Rewriting \eqref{int p cond} in terms of a decay condition
on $\epsilon(r)$ in the closeness condition for $g_0$,
see Theorem \ref{thm conv}, yields
\begin{corollary}
  Let $n\geq3$ and $g\in \Mloc^\infty\big(\R^n,[0,\infty)\big)$ be a
  solution to \eqref{DeTurck flow}, satisfying all the conditions of
  Theorem \ref{thm conv}. If $g_0$ is $\epsilon_0$-close to $h$ for
  some $\epsilon_0=\epsilon_0(n)>0$ sufficiently small, and in
  addition there exist constants $C>0$ and $\zeta>0$ such that $g_0$
  is $Cr^{-1-\zeta}$-close to $h$ on $\R^n\setminus B_r(0)$, then the
  same conclusions as in Theorem \ref{thm two} hold.
\end{corollary}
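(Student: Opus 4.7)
This is essentially a pointwise-to-integral translation followed by a direct appeal to Theorem~\ref{thm two}.

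The plan is as follows. First I would verify that the hypotheses of Theorem~\ref{thm conv} are met: taking $\epsilon_0\leq\tilde\epsilon(n)$ and invoking Theorem~\ref{thm one} produces a solution $g(t)$ that remains $\tilde\epsilon(n)$-close to $h$ for all $t\geq 0$, while the quantitative decay $\epsilon(r):=C r^{-1-\zeta}$ tends to $0$ as $r\to\infty$, giving the asymptotic flatness of $g_0$ required in Theorem~\ref{thm conv}. Thus all the standing hypotheses of Theorem~\ref{thm two} except \eqref{int p cond} are automatic, and it remains only to produce a suitable exponent $p$.

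Second, I would convert the pointwise closeness assumption into a pointwise bound on $\phi_m+\psi_m-2n$. At a point $x$ with $r=|x|$ large (so that $Cr^{-1-\zeta}\leq\tfrac12$, say), the eigenvalues $\lambda_i$ of $(g_0)_{ij}$ with respect to $h_{ij}=\delta_{ij}$ satisfy $|\lambda_i-1|\leq C_1 r^{-1-\zeta}$ and are bounded away from $0$ and $\infty$. Writing $\lambda_i^m-1=(\lambda_i-1)(\lambda_i^{m-1}+\cdots+1)$ and invoking identity \eqref{phi psi 2n sum} gives
\begin{equation*}
\phi_m(x)+\psi_m(x)-2n
=\sum_{i=1}^n\frac{(\lambda_i^m-1)^2}{\lambda_i^m}
\leq C_{m,n}\,r^{-2(1+\zeta)}.
\end{equation*}
On the bounded region $B_1(0)$ the quantity $\phi_m+\psi_m-2n$ is uniformly bounded by $\epsilon_0$-closeness, so only the tail matters for integrability.

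Third, I would raise this bound to the $p$-th power and integrate in polar coordinates. The tail integral is comparable to $\int_1^\infty r^{n-1-2p(1+\zeta)}\,dr$, which converges precisely when $p>\frac{n}{2(1+\zeta)}$. Since $\zeta>0$, this threshold is strictly less than $n/2$; and since $n\geq 3$ we have $n/2>1$, so the interval $[1,n/2)\cap\bigl(\frac{n}{2(1+\zeta)},\infty\bigr)$ is nonempty. Choosing any $p$ in this intersection makes \eqref{int p cond} hold, and Theorem~\ref{thm two} then yields all the stated conclusions.

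I do not anticipate any substantive obstacle: the only nontrivial point is the elementary arithmetic check that a valid $p$ exists in the range prescribed by Theorem~\ref{thm two}, which uses exactly the hypotheses $n\geq 3$ and $\zeta>0$. Everything else is pointwise estimation of the elementary symmetric functions in the eigenvalues $\lambda_i$.
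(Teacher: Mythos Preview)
Your argument is correct and is exactly the approach the paper intends: the Corollary is stated immediately after Theorem~\ref{thm two} with no separate proof, introduced only by the remark that one rewrites the integral condition \eqref{int p cond} as a pointwise decay condition on $\epsilon(r)$. Your eigenvalue estimate via \eqref{phi psi 2n sum} and the polar-coordinate integrability check $p>\tfrac{n}{2(1+\zeta)}$ with $p<\tfrac n2$ is precisely that translation.

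One small redundancy: your first paragraph is unnecessary, since the Corollary already assumes $g$ is a given solution satisfying all conditions of Theorem~\ref{thm conv} (in particular the $\tilde\epsilon(n)$-closeness for all $t$); there is no need to invoke Theorem~\ref{thm one} to produce the solution. Only the verification of \eqref{int p cond} is required, and that you carry out correctly.
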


Note that the solutions constructed in Theorem \ref{thm one} satisfy
the conditions of Theorem \ref{thm conv}, but are not necessarily
$\epsilon(r)$-close to $h$ at infinity.
We point out that these solutions
may depend on the subsequence chosen in the construction 
in Section \ref{existence sec}. Note, however, that it is not
clear, whether solutions to the initial value problem
\eqref{DeTurck flow}
are unique, as in general $g_0$ does not have bounded
curvature or a well-defined Riemannian curvature tensor.
Even if $g(t)$ is $\epsilon(n)$-close to $h$ on $\R^n$
for all $t>0$, we do not know, whether such solutions are
unique. If we study only solutions as constructed in 
Section \ref{existence sec}, we can replace the 
$\epsilon(r)$-closeness condition imposed on $g_0$ by a
considerably weaker integrability condition.

\begin{theorem}\label{thm three}
  Fix $p\geq1$, $m=m(n)\in\N$ sufficiently large, and
  $\epsilon_0=\epsilon_0(n)>0$ sufficiently small.  Let
  $g_0\in\M^0\big(\R^n\big)$ be a Riemannian metric which is
  $\epsilon_0$-close to the standard metric $h$. Let $g \in
  \Mloc^\infty\left(\R^n,(0,\,\infty)\right)$ be a solution to
  \eqref{DeTurck flow} as constructed in Section \ref{existence sec}.
  If for every $\delta>0$ the integral $I^{m,p}_\delta(0)$, as defined
  in Theorem \ref{int delta thm}, is finite, then
$$g(t)\to h\quad\text{in }\M^\infty\left(\R^n\right)
\text{ as }t\to\infty.$$
\end{theorem}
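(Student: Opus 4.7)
The strategy parallels that of Theorem \ref{thm conv}: use the monotonicity of an integral quantity along the $h$-flow to rule out persistent deviation of $g(t)$ from $h$, and then upgrade pointwise decay to $C^\infty$-decay by interior smoothing estimates. The pointwise closeness hypothesis on $g_0$ at infinity is replaced here by the $\delta$-family of integrability conditions $I^{m,p}_\delta(0)<\infty$, and the key input is the monotonicity (with a nontrivial dissipation) of $I^{m,p}_\delta(t)$ along \eqref{DeTurck flow}, asserted in Theorem \ref{int delta thm}.

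By the choice of $\epsilon_0(n)$ and Theorem \ref{thm one}, the constructed solution $g(t)$ stays $\tilde\epsilon(n)$-close to $h$ for all $t\geq 0$. Standard Shi-type interior estimates for \eqref{DeTurck flow}, valid once such closeness holds, then produce uniform bounds $\sup_{\R^n}\left|{\gradh}^k g(t)\right|\leq C(k,n)$ for every $k\in\N$ and every $t\geq 1$. Theorem \ref{int delta thm} supplies, for every $\delta>0$, the uniform bound $I^{m,p}_\delta(t)\leq I^{m,p}_\delta(0)$ together with a differential identity whose dissipation term is integrable on $(0,\infty)$.

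The core claim is that
\[
\sup\nolimits_{\R^n}\,(\phi_m+\psi_m-2n)(\cdot,t)\longrightarrow 0\quad\text{as }t\to\infty,
\]
which I would prove by contradiction. If it fails, there exist $\eta>0$ and sequences $t_j\to\infty$, $x_j\in\R^n$ with $(\phi_m+\psi_m-2n)(x_j,t_j)\geq\eta$. The uniform $C^k$ bounds force the same lower bound, up to a factor $1/2$, on a parabolic cylinder $B_{r_0}(x_j)\times[t_j-\tau,t_j]$ of fixed size. Translating $g_j(x,t):=g(x+x_j,t+t_j)$ and applying Arzel\`a--Ascoli with the uniform $C^k$ estimates, a subsequence converges in $C^\infty_{\text{\it{loc}}}$ to a solution $g_\infty$ of \eqref{DeTurck flow} on $\R^n\times(-\tau,\tau)$, still $\tilde\epsilon$-close to $h$, with $(\phi_m+\psi_m-2n)(0,0)\geq\eta$. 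On the other hand, integrability of the dissipation on $(0,\infty)$ implies that its contribution on $[t_j-\tau,t_j]$ tends to zero, so in the limit the dissipation integrand vanishes identically on bounded space-time cylinders. Choosing $\delta<\eta/4$ and combining this with \eqref{phi psi 2n sum}, one forces $g_\infty\equiv h$ on $B_{r_0}(0)\times\{0\}$, contradicting the lower bound at the origin.

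Once pointwise uniform convergence $g(t)\to h$ has been established, interpolating this $C^0$-decay against the uniform $C^k$ bounds yields $\sup_{\R^n}\left|{\gradh}^k(g(t)-h)\right|\to 0$ for every $k$, giving $g(t)\to h$ in $\M^\infty(\R^n)$. The main obstacle lies in the third paragraph: identifying the precise form of the dissipation integrand in Theorem \ref{int delta thm} whose vanishing in a translated ancient limit forces $g_\infty\equiv h$ near the origin, and verifying that the translation procedure is compatible with any weights or cut-offs entering the definition of $I^{m,p}_\delta$. The global closeness to $h$ supplied by Theorem \ref{thm one} is essential here: it is what makes the translated sequence subsequentially compact and gives a well-defined limit, bridging the gap between the mere $\delta$-family of integrability hypotheses and genuine uniform convergence.
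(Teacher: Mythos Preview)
Your proposal has a genuine gap at the point where you invoke Theorem \ref{int delta thm}. That theorem is stated for solutions with $g_0$ ``as in Theorem \ref{thm conv}'', meaning $g_0$ is $\epsilon(r)$-close to $h$ outside $B_r(0)$ with $\epsilon(r)\to0$. This decay at infinity is what, via Lemma \ref{decay lem}, forces $(\phi_m+\psi_m-2n-\delta)_+$ to have compact spatial support for each $t$, and that compact support is exactly what justifies the integration by parts in the proof of Theorem \ref{int delta thm}. Under the hypotheses of Theorem \ref{thm three} you only know $I^{m,p}_\delta(0)<\infty$; this does \emph{not} imply $\epsilon(r)$-closeness of $g_0$ at infinity (consider $g_0$ with small bumps at points going to infinity whose supports have summable measure), so neither finiteness of $I^{m,p}_\delta(t)$ for $t>0$ nor the monotonicity $I^{m,p}_\delta(t)\le I^{m,p}_\delta(0)$ is available for the limit solution $g$. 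Consequently the dissipation-integrability you use in your translation argument is also unjustified.

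The paper closes this gap by exploiting the phrase ``as constructed in Section \ref{existence sec}'': it works not with $g$ itself but with the approximating Dirichlet solutions ${}^ig$ on balls $B_i$. For these, the boundary condition ${}^ig|_{\partial B_i}=h$ makes $({}^i\phi_m+{}^i\psi_m-2n-\delta)_+$ compactly supported automatically, so the monotonicity ${}^iI^{m,p}_\delta(t)\le{}^iI^{m,p}_\delta(0)$ holds. One then checks, from the explicit construction of ${}^ig_0$ as a convex combination of $g_0$ and $h$, that ${}^iI^{m,p}_\delta(0)\le(1+\epsilon(n))^{2m}I^{m,p}_\delta(0)$ uniformly in $i$. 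The contradiction is obtained by a direct volume count rather than by your compactness--dissipation limit: the interior gradient bound $|\gradh g|\le c/\sqrt t$ spreads the eigenvalue deviation at $(x_k,t_k)$ over a ball of any prescribed radius $R$ once $t_k$ is large, yielding ${}^{\nu_k}I^{m,p}_\delta(t_k)\ge c\,\delta^pR^n$ for suitable approximants, which is incompatible with the uniform bound. Your translation argument, even if the vanishing-dissipation step could be made rigorous, still rests on the unavailable monotonicity for $g$.
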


In the situation of Theorem \ref{thm three}, 
if $I_0^{m,p}(0)$ is finite, we can show that the
diffeomorphisms $\phi_t$ stay bounded and converge in
$C^\infty_\text{{\it loc}}$ to a
limiting diffeomorphism $\phi_\infty$.

\textbf{Outline.}
In order to prove stability of Euclidean $\R^n$ under the
$h$-flow, we proceed as follows. 

If we perturb the metric on $\R^n$, such that the
perturbed metric is $C^0$-close to our original metric and
the perturbation is small enough, this is preserved for all
times. The perturbation is measured in terms of the deviation 
of the eigenvalues $(\lambda_i)$ of the perturbed metric with 
respect to the Euclidean metric from one. We have
$$(1+\epsilon)^{-1}\leq\lambda_i\leq1+\epsilon
\quad\text{for}\quad i=1,\,\ldots,\,n$$
everywhere for all times and $0<\epsilon=\epsilon(n)$ 
provided that initially
$(1+\epsilon_0)^{-1}\leq\lambda_i\leq1+\epsilon_0$ for some
$0<\epsilon_0\leq\epsilon_0(n)$. 

We have interior estimates for the gradient of the metric 
evolving under DeTurck flow. If we assume that 
$\lambda_1\leq\ldots\leq\lambda_n$, we get the estimate
$$|\lambda_i(x,t)-\lambda_i(y,t)|\leq\frac c{\sqrt t} 
\cdot d_\text{Eucl.}(x,y).$$
Thus, for large times, the eigenvalues $\lambda_i$ are
almost spatially constant in any Euclidean ball. 
It is, however, possible that these spatial constants
vary in time. In particular, it is not clear up until now
whether these constants converge as time tends to infinity.

Our perturbation of the Euclidean
metric is small near infinity, i.\,e.{} all eigenvalues
$\lambda_i$ approach one near infinity. Note, however,
that we do not impose a decay rate on our initial metric, 
at which the eigenvalues converge to one as we approach 
spatial infinity. Still, we obtain for all positive times that 
the eigenvalues converge to one at spatial infinity.

In order to get uniform control on the eigenvalues,
we use the quantity considered in \eqref{phi psi 2n sum}
for some $m\in\N$. It vanishes precisely when 
$\lambda_i=1$ for all $i$ and measures the deviation from 
$\lambda_i=1$. The functions $\phi_m$ and $\psi_m$ 
were initially considered by W.-X. Shi in
\cite{ShiJDG1989}.              
The key estimate is to show essentially that
$\int(\phi_m+\psi_m-2n)$ is non-increasing in time. 
Here we integrate over the manifold at a fixed time. For technical
reasons we have to consider a more complicated quantity. For details
we refer to the respective proofs in Section \ref{integral est sec}.

This implies convergence of the eigenvalues $\lambda_i(x,t)$ to $1$ as
$t\to\infty$, uniformly in $x$. For if this were not the case, we
could pick points $(x_k,t_k)$ with $t_k\to\infty$ such that at least
one $\lambda_i(x_k,t_k)$ differs significantly from $1$. If $t_k$ is
large enough, we find a big ball, where at least one eigenvalue
differs considerably from $1$. This yields an arbitrarily large
contribution to $\int(\phi_m+\psi_m-2n)$ if $t_k$ is big
compared to $\max_i|\lambda_i(x_k,t_k)-1|$, contradicting the
monotonicity of $\int(\phi_m+\psi_m-2n)$.  Thus
$\lambda_i(x,t)\to1$, uniformly in $x$ as $t\to\infty$, follows.

\begin{remark}
Note that all the above results also hold if we replace
$\R^n$ by $\mathbb T^k×\R^{n-k}$ for $n-k\geq1$, where
$\mathbb T^k$ is a flat $k$-dimensional torus.   
\end{remark}

In two space dimensions, Ricci flow is a conformal flow given by the
evolution equation
$$\delt g=-Rg,$$
where $R$ is the scalar curvature, see \cite{HamiltonTwo}. In this
situation, we obtain (for details see Appendix \ref{2d sec})
\begin{theorem}
  Let $g_0=e^{-u_0}h$, where $h=\delta$ is the standard Euclidean
  metric, and $u_0\in C^0\left(\R^2\right)$ such that 
$$\sup\limits_{\R^2\setminus B_r(0)}|u|\to0\quad\text{as
}r\to\infty.$$ Then there exists a smooth solution
$(g(t))_{t\in(0,\infty)}$, $g(t)=e^{-u(\cdot,t)}h$, to Ricci flow such
that $u(\cdot,t)\to u_0$ in $C^0_{\text{\it{loc}}}$ as $t \searrow 0$.
Furthermore, as $t\to\infty$, we obtain that $u(\cdot,t)\to0$ in
$C^\infty\left(\R^2\right)$.
\end{theorem}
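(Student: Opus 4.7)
The plan is to reduce two-dimensional Ricci flow to a scalar quasilinear parabolic equation for the conformal factor, establish existence by a routine approximation, and then deduce uniform decay by a parabolic rescaling argument combined with Gaussian bounds for linear uniformly parabolic equations. Writing $g=e^{-u}h$, the conformal invariance of Ricci flow in two dimensions (together with the fact that the DeTurck vector field vanishes for conformally flat metrics when $n=2$) reduces $\delt g=-Rg$ to
\[
\delt u = e^{u}\Delta u\quad\text{on }\R^2\times(0,\infty),
\]
where $\Delta$ is the Euclidean Laplacian; this is uniformly parabolic whenever $u$ is bounded. I approximate $u_0$ in $C^0_{\text{loc}}$ by smooth $u_0^{(k)}$ with $\|u_0^{(k)}\|_\infty\leq\|u_0\|_\infty=:M$, solve for $u^{(k)}$ globally in time by standard parabolic theory, and use the scalar maximum principle to conclude $\|u^{(k)}(\cdot,t)\|_\infty\leq M$, so $e^{u^{(k)}}\in[e^{-M},e^{M}]$ and the equation is uniformly parabolic. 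Interior Schauder estimates then furnish bounds on every derivative on $\R^2\times[\tau,\infty)$, $\tau>0$, independent of $k$; a diagonal subsequence converges in $C^\infty_{\text{loc}}$ to a smooth global solution $u$, and continuity up to $t=0$ in $C^0_{\text{loc}}$ follows from standard local barrier constructions.

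For decay as $t\to\infty$, I argue by contradiction: suppose $|u(x_k,t_k)|\geq\delta>0$ with $t_k\to\infty$. Rescale parabolically by
\[
u_k(y,s):=u\bigl(x_k+\sqrt{t_k}\,y,\ t_k s\bigr),
\]
which again solves $\delt u_k = e^{u_k}\Delta u_k$ with $\|u_k\|_\infty\leq M$ and $|u_k(0,1)|\geq\delta$. The initial data $u_k(y,0)=u_0(x_k+\sqrt{t_k}\,y)$ is dominated by $M$, and after passing to a subsequence so that $x_k/\sqrt{t_k}$ converges in $\R^2\cup\{\infty\}$ it tends to $0$ pointwise for all but at most one $y$, because $u_0\to 0$ at infinity. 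View $u_k$ as a solution of the \emph{linear} uniformly parabolic equation $\delt u_k = a_k\Delta u_k$ with $a_k:=e^{u_k}\in[e^{-M},e^{M}]$; Aronson's two-sided Gaussian bounds on the associated fundamental solution $\Gamma_k$ give the representation $u_k(0,1)=\int \Gamma_k(0,1;y,0)\,u_k(y,0)\,dy$ together with upper bounds on $\Gamma_k$ depending only on $M$, so splitting the integral at $|y|=R$ yields
\[
|u_k(0,1)| \leq M\!\int_{|y|>R}\!\Gamma_k(0,1;y,0)\,dy + C(R)\!\int_{|y|\leq R}\!|u_k(y,0)|\,dy.
\]
For $R$ large the first summand is at most a preassigned $\varepsilon$, uniformly in $k$, while the second tends to zero by dominated convergence. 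This contradicts $|u_k(0,1)|\geq\delta$, proving $\sup_{\R^2}|u(\cdot,t)|\to 0$.

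To promote $C^0$-decay to $C^\infty$-decay I combine uniform interior bounds $\|\grad^{j}u(\cdot,t)\|_\infty\leq C_j$ for $t\geq 1$ with standard Gagliardo--Nirenberg interpolation $\|\grad^{j}u\|_\infty \leq C\,\|u\|_\infty^{1/(j+1)}\|\grad^{j+1}u\|_\infty^{j/(j+1)}$. The main obstacle is the step justifying $u_k(0,1)\to 0$ from only pointwise-a.e.\ vanishing of the rescaled initial data: a static barrier argument in $\R^2$ is obstructed by the absence of positive superharmonic functions with decay at infinity, which forces invocation of Aronson's Gaussian bounds for linear parabolic equations with merely bounded coefficients, legitimate here because $e^{u_k}$ is a smooth, bounded coefficient along each fixed solution.
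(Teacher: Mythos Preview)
Your existence argument (mollify, solve approximate problems, maximum principle for the $L^\infty$-bound, interior Schauder/scaling estimates, diagonal subsequence, local barriers for attaining initial data) is essentially the paper's argument in Appendix~\ref{2d sec}, Theorem~\ref{two d exist thm}.

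Your convergence argument, however, is genuinely different from the paper's. The paper proceeds in exact analogy with Sections~\ref{integral est sec} and~\ref{eigenval conv sec}: it shows that for $p\ge\sup|u_0|+1$ the integral $\int_{\R^2}\tfrac1p(u-\delta)_+^p\,dx$ is nonincreasing in time (a one-line integration by parts on $\{u>\delta\}$, using that $(u-\delta)-p+1\le 0$ there), and similarly for $(-u-\delta)_+$; combined with the interior gradient bound $|\nabla u|\le c/\sqrt t$, any persistent deviation $|u(x_k,t_k)|\ge\delta$ would force the integral at time $t_k$ to be at least $c\,\delta^p\,t_k\to\infty$, a contradiction. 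You instead rescale parabolically, freeze the coefficient $e^{u_k}$ so that $u_k$ solves a \emph{linear} uniformly parabolic equation, and invoke Aronson's two-sided Gaussian bounds together with dominated convergence of the rescaled initial data.

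Both arguments are valid. The paper's route keeps the two-dimensional case structurally parallel to the $n\ge 3$ theory and produces a monotone integral; under an integrability hypothesis on $u_0$ it would immediately yield a quantitative decay rate as in Lemma~\ref{lambda decay with rate}. Your route is more self-contained---it needs neither the interior closeness machinery of Lemma~\ref{interiour fair} nor any finiteness of $I_\delta(0)$, only classical linear parabolic theory---and is arguably cleaner here, though it does not directly give a rate. One small point worth making explicit in your write-up: the representation $u_k(0,1)=\int\Gamma_k(0,1;y,0)\,u_k(y,0)\,dy$ relies on uniqueness of bounded solutions to the linear Cauchy problem, which is standard but should be cited alongside Aronson's estimates.
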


For two dimensional Ricci flow, L.-F. Wu studied long time behavior
and convergence of solutions \cite{LaniWuRtwo}. She studied initial
complete metrics $g_0 = e^{u_0} h$ which have bounded curvature and
satisfy $e^{-u_0}|Du_0|^2 < \infty.$ In this case, she showed that a
long time solution to Ricci flow exists and that it ``converges
smoothly in the sense of modified subsequences'' to a smooth limiting
metric as $t \to \infty.$ A smooth family of metrics $(g(t))_{t \in
  [0,\infty)}$ on $\R^n$ ``converges smoothly in the sense of modified
subsequences'' to a metric $l$ on $\R^n$ as $t\to\infty$ if there
exist diffeomorphisms $\phi_i: \R^n \to \R^n$ and a sequence $t_1 <
t_2 < \ldots$ with $t_i \to \infty$ as $i \to \infty$, such that
$(\phi_i)^* (g(t_i)) \to l$ smoothly on any fixed compact subset of
$\R^n$. In particular, this does not imply uniform convergence on all
of $\R^n$.\par By imposing strong decay conditions on the curvature
tensor at infinity, stability of flat space under Ricci flow was
proved by W.-X.  Shi \cite{ShiJDG1989Konv}.  Stability of compact flat
manifolds under Ricci flow was studied by C. Guenther, J. Isenberg, D.
Knopf and by N.  \v{S}e\v{s}um
\cite{SesumFlatRicciStab,StabCompactRicciFlat}.  In the rotationally
symmetric situation stability of flat Euclidean space was investigated
by T. Oliynyk and E. Woolgar \cite{OliynykWoolgar}.  A.  Chau and the
first author obtained a stability result for the Kähler potential of
stationary rotationally symmetric solitons of positive holomorphic
bisectional curvature under Kähler-Ricci flow \cite{OSAlbert}.
Uniqueness of solutions to Ricci flow with bounded curvature on
non-compact manifolds is discussed in
\cite{ChenZhuRicciUnique,HsuRicciUnique}.  J. Clutterbuck and the
first two authors proved stability of convex rotationally symmetric
translating solutions to mean curvature flow in
\cite{JCOSFSMCFStability}.  Short time existence results for
$C^0$-metrics were shown in \cite{MilesC0} using similar techniques to
this paper.

The rest of the paper is organized as follows. We introduce our
notation in Section \ref{nota sec} and recall some evolution
equations in Section \ref{evol eq sec}. Existence of solutions
for all times is shown, see Section \ref{existence sec}. We prove
interior closeness and a priori estimates in Section \ref{int est sec}.
In Section \ref{integral est sec} we show that an integral quantity 
based on the expression in \eqref{phi psi 2n sum} is monotonically 
decreasing along the flow. Combining this with the interior gradient
estimates, we obtain in Section \ref{eigenval conv sec} that the 
eigenvalues of $g_{ij}(x,t)$ with respect to the background metric
converge uniformly to one as $t\to\infty$. An iteration scheme allows us
to improve our gradient estimates for large times, see Section
\ref{imp est sec}. In Section \ref{const sol sec}, 
we study the diffeomorphisms relating \eqref{ricci flow} and 
\eqref{DeTurck flow}, and show their convergence for large times. 
We address the proof of Theorem \ref{thm three} 
in Section \ref{conv int bounds}. Stability results for Ricci flow in
two dimensions are addressed in Appendix \ref{2d sec}. 

We want to thank Gerhard Huisken for fruitful discussions. 
The third author acknowledges support from SFB 647/B3 during 
his visits in Berlin.  
This paper is part of the research project ``Stability of non-compact
manifolds under curvature flows'' of the first two authors within the
priority program ``Global Differential Geometry'' SPP 1154 of the
German research foundation DFG.

\section{Notation}\label{nota sec}
Let $g=(g_{ij})_{1\leq i,\,j\leq n}$ be a Riemannian metric.
By $(g(t))_{t\in(0,\,\infty)}$, we denote a family of metrics. 
We denote the inverse of $(g_{ij})$ by $\left(g^{ij}\right)$
and use the Einstein summation convention for repeated 
upper and lower indices. If $g_1$ and $g_2$ are two metrics
such that $(g_1)_{ij}\xi^i\xi^j\leq(g_2)_{ij}\xi^i\xi^j$
for all $\left(\xi^i\right)\in\R^n$,
we denote this by $g_1\leq g_2$. Unless otherwise stated,
geometric quantities like covariant derivatives $\nabla_i$ 
and Christoffel symbols $\Gamma^k_{ij}$ are computed with
respect to the evolving metrics $g(t)$. We use indices 
$h$ to denote quantities depending on the metric
$h$, e.\,g.{} $\gradh g$ denotes the 
covariant derivative of $g$ with respect to $h$,
i.\,e.\ a partial derivative. In short formulae, we
also use $\nabla g$ instead. 
With the exception of the beginning of Section \ref{existence sec},
$h$ will always denote the standard metric on $\R^n$. On $\R^n$ we
will always use coordinates such that $h_{ij}=\delta_{ij}$. 
A ball of radius $r$, centered at $x$, is denoted by
$B_r(x)$. We will only use Euclidean balls. 
The norms $|\cdot|$ and $\up h|\cdot|$ are computed with 
respect to the flat background metric $h$.
The letter $c$ denotes generic constants.

In formulae, where we use $\lambda_i$ to denote the eigenvalues
of $g_{ij}$ with respect to $h_{ij}$, we will always assume 
that $h_{ij}=\delta_{ij}$ in the coordinate system chosen.
Several times, we will use that $\uph|g_{ij}(x,t)-g_{ij}(y,t)|\leq A$
implies that $|\lambda_i(x,t)-\lambda_i(y,t)|\leq A$ if we
assume that $\lambda_1(\cdot,t)\leq\ldots\leq\lambda_n(\cdot,t)$.
Similarly, $\uph|g_{ij}(x,t)-h_{ij}|\leq A$
implies $|\lambda_i(x,t)-1|\leq A$.

\section{Evolution Equations}\label{evol eq sec}
In this section, we collect some evolution equations from 
\cite{ShiJDG1989} and state some direct consequences. 

Assume that in appropriate coordinates, we have at a fixed point
and at a fixed time $h_{ij}=\delta_{ij}$, 
$g_{ij}=\diag(\lambda_1,\,\lambda_2,\,\ldots,\,\lambda_n)$,
$\lambda_i>0$.

The evolution equation for the metric, computed with respect to
a flat background metric, is, see 
\cite[Lemma 2.1]{ShiJDG1989},
\begin{align*}
\fracp{}tg_{ij}=&\,g^{ab}\gradh_a\gradh_bg_{ij}\\
&\,+\tfrac12g^{ab}g^{pq}\left(\gradh_i g_{pa}\gradh_jg_{qb}
+2\gradh_ag_{jp}\gradh_qg_{ib}-2\gradh_ag_{jp}
\gradh_bg_{iq}\right.\\
&\,\left.\qquad\qquad\qquad-2\gradh_jg_{pa}\gradh_bg_{iq}
-2\gradh_ig_{pa}\gradh_bg_{jq}\right).
\end{align*}

Consider $\phi_m$ as defined in \eqref{phi def eqn}.
The evolution equation of $\phi_m$ 
for a flat background metric $h_{ij}$ is, 
see \cite[Lemma 2.2 (70)]{ShiJDG1989},
\begin{align*}
\fracp{}t\phi_m=&\,g^{ab}\gradh_a\gradh_b\phi_m\umbruch\\
&\,-m\sum\limits_{i,j,a}\frac1{\lambda_a}
\left[\sum\limits_{k=2}^m
\left(\frac1{\lambda_i}\right)^k\left(\frac1{\lambda_j}\right)^{m+2-k}
\right]\left(\gradh_ag_{ij}\right)^2\umbruch\\
&\,-\sum\limits_{i,q,k}\frac m{2\lambda_i^{m+1}\lambda_q\lambda_k}
\left(\gradh_kg_{iq}+\gradh_qg_{ik}-\gradh_ig_{qk}
\right)^2.
\end{align*}
We deduce for $g(t)$ that is $\epsilon$-close to $h$ that
\begin{align*}
\fracp{}t\phi_m\leq&\,g^{ij}\gradh_i\gradh_j\phi_m\\
&\,-m(m-1)(1+\epsilon)^{-(m+3)}
\sum\limits_{i,\,j,\,k}\left(\gradh_kg_{ij}\right)^2.
\end{align*}

{}From the evolution equation of the metric, we get for 
$\psi_m$, as introduced in \eqref{psi def eqn},
the evolution equation
\begin{align*}
\fracp{}t{\psi_m}=&\,mh^{a_1b_1}g_{b_1a_2}h^{a_2b_2}g_{b_2a_3}\cdots
h^{a_mb_m}\fracp{}tg_{b_ma_1}\umbruch\\
=&\,mh^{a_1b_1}g_{b_1a_2}h^{a_2b_2}g_{b_2a_3}\cdots
h^{a_mb_m}\left(g^{ij}\gradh_i\gradh_jg_{b_ma_1}\right)\\
&\,+c(n)m\underbrace{h^{-1}*\cdots*h^{-1}}_m*
\underbrace{g*\cdots*g}_{m-1}*g^{-1}*g^{-1}
*\gradh g*\gradh g\umbruch\\
=&\,g^{ij}\gradh_i\gradh_j\psi_m\\
&\,-m\sum\limits_{i,\,j,\,a}
\left[\sum\limits_{k=0}^{m-2}\lambda_i^k\lambda_j^{m-2-k}\right]
\frac1{\lambda_a}\left(\gradh_ag_{ij}\right)^2\\
&\,+c(n)m\underbrace{h^{-1}*\cdots*h^{-1}}_m*
\underbrace{g*\cdots*g}_{m-1}*g^{-1}*g^{-1}
*\gradh g*\gradh g,
\end{align*}
where $*$ indicates contractions and linear combinations of
contractions. The factor $c(n)$ indicates that the number of the
respective terms depends only on $n$. Therefore, we deduce
that
\begin{align*}
\fracp{}t\psi_m\leq&\,g^{ij}\gradh_i\gradh_j\psi_m\umbruch\\
&\,-m(m-1)(1+\epsilon)^{-(m-1)}
\sum\limits_{i,\,j,\,k}\left(\gradh_kg_{ij}\right)^2\\
&\,+c(n)m(1+\epsilon)^{m+1}
\sum\limits_{i,\,j,\,k}\left(\gradh_kg_{ij}\right)^2.
\end{align*}

\section{Existence}\label{existence sec}
In this section we prove long time existence of the $h$-flow for a 
smooth initial metric $g_0$ which is $\ep$-close to $h=\delta$.
Short time existence for Ricci-DeTurck flow on non-compact manifolds
 was first proved in 
\cite{ShiJDG1989}.
Short time existence for an arbitrary smooth background metric $h$
with bounded curvature, 
when $g_0$ is $\ep$-close to $h$, was proved in \cite{MilesC0}, 
using similar techniques to those of W.-X. Shi.
We also reprove the short time existence here for completeness. 
(note: some of the citations in \cite{MilesC0} in the proof of 
the short time existence were incorrect.)

\begin{lemma}\label{gradient bounds for Dirichlet solutions}
Let $(g(t))_{t \in [0, T)}$ be a smooth $\ep$-close solution to
the $h$-flow with flat background metric $h=\delta$
on a closed ball $D \subset \R^n$ with
$0<\epsilon\leq\epsilon(n)$ and 
$g(\cdot,t)|_{\boundary D} = h(\cdot)|_{\boundary D}$.
Then
$$\sup\limits_{D×[0,T)}  \Big|\gradh^m g\Big|^2 \leq c(n,m,D,T,g(0)).$$ 
\end{lemma}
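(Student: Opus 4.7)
The plan is to establish the bounds following the Shi strategy, using the maximum principle together with the monotonicity of the auxiliary quantity $\phi_m+\psi_m-2n$, and to handle the lateral boundary $\partial D\times(0,T)$ by parabolic boundary regularity for quasilinear systems.

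First I would derive from the evolution equation for $g_{ij}$ recalled in Section \ref{evol eq sec} a pointwise inequality of the form
\[
\Bigl(\partial_t - g^{ij}\gradh_i\gradh_j\Bigr)\bigl|\gradh g\bigr|^2 \leq -2\bigl|\gradh\gradh g\bigr|^2 + C(n,\epsilon)\bigl|\gradh g\bigr|^4,
\]
where uniform parabolicity comes from the $\epsilon$-closeness of $g$ to $h$. The dangerous term $|\gradh g|^4$ would be absorbed by combining with $\phi_m+\psi_m$. Using the two evolution inequalities from Section \ref{evol eq sec}, one sees that for $m$ chosen large (depending on $\epsilon(n)$) the bounded quantity $\Phi:=\phi_m+\psi_m-2n+K$, with $K>0$ small, satisfies
\[
\Bigl(\partial_t - g^{ij}\gradh_i\gradh_j\Bigr)\Phi \leq -c_0(n,m,\epsilon)\bigl|\gradh g\bigr|^2.
\]
Forming the product $F:=\Phi\,|\gradh g|^2$ and applying Young's inequality to the cross term $\langle\gradh\Phi,\gradh|\gradh g|^2\rangle$, which is pointwise controlled by $c\,|\gradh g|^2|\gradh\gradh g|$, one obtains
\[
\Bigl(\partial_t-g^{ij}\gradh_i\gradh_j\Bigr)F \leq \bigl[C_1(n,\epsilon,m)K + C_2(n,\epsilon)-c_0\bigr]\bigl|\gradh g\bigr|^4 - c_3\bigl|\gradh\gradh g\bigr|^2.
\]
Choosing $m$ large and $K$ small makes the coefficient of $|\gradh g|^4$ negative, so that $F$ is a subsolution of a uniformly parabolic equation on $D\times[0,T)$.

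The main obstacle is that the maximum principle on the bounded cylinder does not rule out the supremum being attained on $\partial D\times(0,T)$: the Dirichlet condition $g=h$ on $\partial D$ controls only tangential derivatives of $g$, not normal derivatives, and so $F$ is not immediately controlled there. To deal with this I would invoke the standard Schauder theory for uniformly parabolic quasilinear systems with smooth time-independent Dirichlet data: the $\epsilon$-closeness gives uniform parabolicity, and smoothness of $g(0)$ together with the compatibility $g(0)|_{\partial D}=h|_{\partial D}$ yields $C^{k,\alpha}$ estimates up to $\partial D\times[0,T)$ depending on $n,k,D,T,g(0)$. Combined with the interior estimate for $F$ from the previous step, this yields the $m=1$ case of the lemma, that is, $\sup_{D\times[0,T)}|\gradh g|^2\leq c(n,D,T,g(0))$.

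For the higher derivatives I would argue by induction on $m$. Differentiating the evolution equation $m-1$ times yields a parabolic inequality
\[
\Bigl(\partial_t - g^{ij}\gradh_i\gradh_j\Bigr)\bigl|\gradh{}^m g\bigr|^2 \leq -2\bigl|\gradh{}^{m+1} g\bigr|^2 + P_m\!\bigl(\gradh g,\ldots,\gradh{}^{m} g\bigr),
\]
where $P_m$ is polynomial in its arguments with coefficients depending on $n$ and on bounds for $g,g^{-1}$. Assuming the estimates have been proved for derivatives of order up to $m-1$, the classical Shi-type test function $F_m:=\bigl(A+|\gradh{}^{m-1}g|^2\bigr)|\gradh{}^m g|^2$ with $A=A(n,m,D,T,g(0))$ large is a subsolution up to lower-order bounded terms; the parabolic boundary Schauder theory handles $\partial D\times(0,T)$ exactly as before, closing the induction and giving the claimed bound.
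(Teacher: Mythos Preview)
Your proposal is a reasonable expansion of the argument, and the overall structure---Bernstein-type test functions plus parabolic boundary regularity, then induction on $m$---is in the spirit of Shi's method that the paper invokes. The paper's own proof, however, is a one-line reference: it says to follow Shi's Lemma~3.1 in \cite{ShiJDG1989} verbatim, replacing Shi's background metric $\tilde g=g(0)$ by $h$, and notes that the explicit dependence on $g(0)$ appears only because in Shi's setting $\tilde g$ and $g(0)$ coincide.

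Two comments on your route versus the paper's. First, once you grant yourself ``standard Schauder theory for uniformly parabolic quasilinear systems with smooth Dirichlet data'' on the bounded cylinder $D\times[0,T)$, you already have the full conclusion of the lemma globally on $D$; the interior maximum-principle machinery with $F=\Phi\,|\gradh g|^2$ then becomes redundant. This global parabolic regularity is essentially what Shi's Lemma~3.1 provides, and it is the same Lady\v{z}enskaja--Solonnikov--Ural'ceva theory the present paper invokes again in the proof of Theorem~\ref{existence thm (Dirichlet)}. Second, your boundary step as phrased is slightly loose: \emph{linear} Schauder theory does not by itself give a priori bounds for a system whose lower-order part is quadratic in $\gradh g$ without first controlling $|\gradh g|$, which is what you are trying to prove. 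What is actually needed---and what Shi uses---is the quasilinear a priori estimate package (boundary gradient barriers together with interior Bernstein, as in Chapter~VII of \cite{LadySoloUral}), which handles the quadratic gradient nonlinearity as part of the scheme rather than presupposing a gradient bound.
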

\begin{proof}
  We proceed exactly as in the proof of Shi \cite[Lemma
  3.1]{ShiJDG1989}: where he uses $\ti g$, we use $h$.  The only other
  minor difference is that the dependence on $g(0)$ does not
  explicitly appear in Shi's paper \cite[Lemma 3.1]{ShiJDG1989}.  This
  is because he has $\ti g = g(0).$
\end{proof}

\begin{lemma}\label{fairness pres lem}
  Let $(g(t))_{t\in[0,T)}$ be a smooth solution to the $h$-flow on
  $B_i$, $i\in\N$, where $h$ is the standard metric on $\R^n$.  Then
  for all $\epsilon>0$ there exists
  $0<\epsilon_0(n,\epsilon)<\epsilon$, such that the following holds.
  If $g(0)$ is $\epsilon_0$-close to $h$ and $g(t)=h$ on $\partial
  B_i×[0,T)$, then $g(t)$ is $\ep$-close to $h$ for all $t\in[0,T)$.
\end{lemma}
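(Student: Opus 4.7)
We plan to prove the lemma by applying the parabolic maximum principle to $Q:=\phi_m+\psi_m$ for a suitable $m=m(n)$ large, together with a continuity/bootstrap argument. The quantity $Q$ is well-suited since, by \eqref{phi psi 2n sum}, it controls how far $g$ is from $h$; on the parabolic boundary $\partial B_i\times[0,T)$ one has $Q\equiv 2n$ (because $g=h$ there), and the $\epsilon_0$-closeness of $g_0$ gives $Q(\cdot,0)\leq 2n(1+\epsilon_0)^m$ on all of $B_i$.

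The key analytic ingredient is that, as long as $g(t)$ is $\tilde\epsilon$-close to $h$, summing the evolution inequalities for $\phi_m$ and $\psi_m$ from Section~\ref{evol eq sec} produces
$$\fracp{}t Q \leq g^{ij}\gradh_i\gradh_j Q + B(m,\tilde\epsilon,n)\sum\limits_{i,j,k}\bigl(\gradh_k g_{ij}\bigr)^2,$$
where
$$B(m,\tilde\epsilon,n) = c(n)\,m\,(1+\tilde\epsilon)^{m+1} - m(m-1)\bigl[(1+\tilde\epsilon)^{-(m+3)}+(1+\tilde\epsilon)^{-(m-1)}\bigr].$$
For $\tilde\epsilon$ small enough that $(1+\tilde\epsilon)^{2m}$ is controlled and $m=m(n)$ sufficiently large, the negative contribution dominates so that $B\leq 0$, and $Q$ becomes a subsolution of a linear uniformly parabolic equation. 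The maximum principle on $B_i\times[0,t]$ then yields $Q(\cdot,t)\leq 2n(1+\epsilon_0)^m$, which via $\lambda_i^{-m}\leq\phi_m\leq Q$ and $\lambda_i^{m}\leq\psi_m\leq Q$ translates to the two-sided eigenvalue bound
$$(2n)^{-1/m}(1+\epsilon_0)^{-1}\leq\lambda_i(\cdot,t)\leq(2n)^{1/m}(1+\epsilon_0).$$

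The argument is closed by a bootstrap. Since the conclusion for smaller $\epsilon$ trivially implies it for larger $\epsilon$, we may restrict to $\epsilon\leq\bar\epsilon(n)$, and then choose $m=m(n,\epsilon)$ large enough that (i) $B(m,\tilde\epsilon,n)\leq0$ for all $\tilde\epsilon\leq\epsilon$, and (ii) $(2n)^{1/m}\leq 1+\epsilon/2$. Finally, pick $\epsilon_0=\epsilon_0(n,\epsilon)\in(0,\epsilon)$ with $(2n)^{1/m}(1+\epsilon_0)<1+\epsilon$. Let $t^*\in[0,T]$ be the supremum of times for which $g(s)$ is $\epsilon$-close to $h$ on $[0,t^*)$; continuity and $\epsilon_0<\epsilon$ give $t^*>0$. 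On $[0,t^*)$ the subsolution property applies, so the maximum principle produces the eigenvalue bound above, which lies strictly inside $[(1+\epsilon)^{-1},1+\epsilon]$. Continuity at $t^*$ then forces $\epsilon$-closeness to persist past $t^*$, contradicting maximality unless $t^*=T$.

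The main obstacle is the positive reaction term $c(n)m(1+\tilde\epsilon)^{m+1}\sum(\gradh g)^2$ appearing in the evolution of $\psi_m$, which must be absorbed by the negative gradient terms; this is what forces the coupled choice of $m$ large and $\epsilon$ small, and is the reason one cannot simply take $m=1$. Beyond balancing these parameters, the argument is a standard maximum principle on a bounded domain with smooth data.
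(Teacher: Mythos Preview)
Your proof is correct and follows essentially the same route as the paper's: both apply the parabolic maximum principle to $\phi_m+\psi_m$ (the paper shifts by $-2n$ so the quantity vanishes on $\partial B_i$), using the evolution inequalities of Section~\ref{evol eq sec} and a continuity/bootstrap argument to close the loop. The only noteworthy difference is how the bound on $Q$ is converted back to eigenvalue bounds: the paper uses the identity \eqref{phi psi 2n sum}, i.e.\ $\Phi=\sum\lambda_i^{-m}(\lambda_i^m-1)^2$, which gives a direct correspondence between $\Phi\leq\delta$ and $\epsilon$-closeness and allows $m=m(n)$ to be fixed once and for all, whereas your cruder bound $\lambda_i^m\leq Q\leq 2n(1+\epsilon_0)^m$ introduces the factor $(2n)^{1/m}$ and forces $m=m(n,\epsilon)$. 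One caution on your phrasing: condition~(i) is \emph{not} satisfied for all large $m$, since for fixed $\epsilon>0$ the term $(1+\epsilon)^{2m}$ eventually dominates $m-1$; the compatibility of (i) and (ii) genuinely requires $\epsilon\leq\bar\epsilon(n)$, and you should say ``there exists $m$'' rather than ``$m$ large enough''.
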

\begin{proof}
Consider
$\Phi:=\phi_m+\psi_m-2n$ as in \eqref{phi psi 2n sum}.
It vanishes on $\partial D×[0,T)$. 
The evolution equations in Section \ref{evol eq sec} imply that
\begin{align}\label{Phi ungl}
\delt\Phi\leq&\,g^{ij}\gradh_i\gradh_j\Phi
\intertext{as long as $g(t)$ is $\tilde\epsilon$-close to $h$ 
for $\tilde\epsilon>0$ satisfying}
c(n)(1+\tilde\epsilon)^{2m}\leq&\,(m-1).\nonumber
\end{align}
Choose $\tilde\epsilon=\tilde\epsilon(n)$ and $m=m(n)$ accordingly.
\par
Fix $\delta>0$ so that $\Phi\leq2\delta$ implies, 
see \eqref{phi psi 2n sum}, 
that the metric is $\epsilon$-close to $h$,
in particular $\tilde\epsilon$-close to $h$.
Now fix $\epsilon_0=\epsilon_0(n)>0$ such that $\epsilon_0<\epsilon$ 
and so that $\Phi\leq\delta$ for every metric which is 
$\epsilon_0$-close to $h$. 
\par
Consider the maximal
time interval $I\subset[0,T)$ on which $\Phi\leq2\delta$.
We may assume that $I=[0,\tau]$ with $\tau<T$.
This implies that $(g(t))_{t\in[0,\tau]}$ is $\tilde\epsilon$-close 
to $h$ (even $\epsilon$-close to $h$).
According to \eqref{Phi ungl}
and the maximum principle,
$\max\limits_{x\in D}\Phi(x,t)$ is non-increasing in $t$
for all $t\in[0,\tau]$.
Thus $\Phi(\tau)\leq\Phi(0)\leq\delta$. 
This contradicts the choice of $\tau$. Therefore 
$I=[0,T)$ and $(g(t))_{t\in[0,T)}$ is $\epsilon$-close to $h$.
\end{proof}

 In the remainder of this section, we will denote with
$\epsilon_0(n,\epsilon)>0$ the constant in Lemma \ref{fairness pres
  lem}. Note that although we allow for arbitrary $\epsilon>0$, it
follows from the proof that $\epsilon_0(n,\epsilon)\le\ol\epsilon$
for some small $\ol\epsilon=\ol\epsilon(n)>0$. 

\begin{theorem}\label{existence thm (Dirichlet)}
  Let $\ep >0$ be given, and $g_0$ be smooth and $\ep_0(\ep,n)$-close
  to $h= \de$ on a closed ball $D \subset \R^n$. Assume that $g_0=h$
  near $\partial D$. Then there exists a unique smooth solution
  $(g(t))_{t \in [0, \infty)}$ to the $h$-flow with $g(0) = g_0$ and
  $g(t) |_{\boundary D} = h|_{\boundary D}$. Furthermore, $(g(t))_{t
    \in [0,\infty)}$ is $\ep$-close to $h$.
\end{theorem}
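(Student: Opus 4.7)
The plan is to combine a short-time existence result for the $h$-flow with the a priori control supplied by Lemmas \ref{fairness pres lem} and \ref{gradient bounds for Dirichlet solutions}, via a standard continuation argument. First I would establish local-in-time existence and uniqueness on $D\times[0,\tau)$ for some $\tau>0$. The $h$-flow, as recorded in Section \ref{evol eq sec}, is a quasilinear system of the schematic form
$$\partial_t g_{ij}=g^{ab}\gradh_a\gradh_b g_{ij}+B(g,\gradh g),$$
which is strictly parabolic whenever $g$ is $\epsilon$-close to $h$, since then $g^{ab}$ is uniformly positive definite and comparable to $h^{ab}$. The hypothesis $g_0=h$ near $\partial D$ makes every parabolic-corner compatibility condition at $\partial D\times\{0\}$ automatic, because $h$ is a stationary solution to the $h$-flow (it is flat, so $\mathrm{Ric}(h)=0$ and $V(h)=0$); hence all time derivatives of $g$ vanish identically near $\partial D$ at $t=0$. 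Standard quasilinear parabolic theory, or the short-time argument of \cite{ShiJDG1989} in the Dirichlet setting, then yields a unique smooth short-time solution.

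Next, let $T^\ast\in(0,\infty]$ denote the maximal time of smooth existence of a solution with the prescribed initial and boundary data, and suppose for contradiction that $T^\ast<\infty$. Since $\epsilon_0=\epsilon_0(n,\epsilon)$ is exactly the constant from Lemma \ref{fairness pres lem}, that lemma gives that $g(t)$ is $\epsilon$-close to $h$ throughout $[0,T^\ast)$, preserving strict parabolicity all the way up to $T^\ast$. Lemma \ref{gradient bounds for Dirichlet solutions} then supplies the bounds
$$\sup_{D\times[0,T^\ast)}\bigl|\gradh^m g\bigr|^2\leq c(n,m,D,T^\ast,g_0)$$
for every $m\in\N$. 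Trading spatial derivatives for time derivatives via the equation, these yield $C^\infty$-convergence of $g(t)$ to a smooth limit $g(T^\ast)$ on $D$ which still satisfies $g(T^\ast)|_{\partial D}=h|_{\partial D}$ and is $\epsilon$-close to $h$. Restarting the $h$-flow at $t=T^\ast$ with initial datum $g(T^\ast)$ then extends the solution to a strictly larger interval, contradicting the maximality of $T^\ast$. Hence $T^\ast=\infty$, and the global $\epsilon$-closeness is immediate from Lemma \ref{fairness pres lem}. Uniqueness on $[0,\infty)$ reduces to uniqueness on each finite subinterval, which comes from the maximum principle applied to the strictly parabolic equation satisfied by the difference of two candidate solutions.

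The main technical obstacle is verifying the parabolic-corner compatibility when restarting at $t=T^\ast$, since at that moment $g(T^\ast)$ need not coincide with $h$ in any full neighborhood of $\partial D$, only on $\partial D$ itself. However, because the existing solution is $C^\infty$ up to and including $t=T^\ast$ and satisfies $g\equiv h$ on $\partial D$ for every $t\in[0,T^\ast]$, all tangential derivatives of $g$ agree with those of $h$ along $\partial D$ at $t=T^\ast$, while successive time differentiation of the evolution equation, combined with the identity $\partial_t g|_{\partial D}=0$, determines the remaining normal-time jets consistently. With this observation the short-time existence result applies at $t=T^\ast$, the continuation step goes through, and the theorem follows.
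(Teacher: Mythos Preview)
Your proposal is correct and uses the same two a priori ingredients as the paper, namely Lemma~\ref{fairness pres lem} for $\epsilon$-closeness and Lemma~\ref{gradient bounds for Dirichlet solutions} for derivative bounds. The difference lies in the existence machinery. You run a short-time existence plus continuation argument: solve locally, push to a maximal time $T^\ast$, use the a priori estimates to extract a smooth limit $g(T^\ast)$, and restart. The paper instead invokes the Leray--Schauder fixed point argument of \cite[Theorem~7.1, Chapter~VII]{LadySoloUral} directly on an arbitrary finite interval $[0,T]$; since the a priori bounds hold uniformly on $[0,T]$ for every $T<\infty$, this yields existence on $[0,\infty)$ in one stroke.

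The practical advantage of the paper's route is that it sidesteps the corner-compatibility issue you had to address when restarting at $T^\ast$, where $g(T^\ast)$ only agrees with $h$ on $\partial D$ rather than in a full neighborhood. Your resolution of that point is fine---the smooth solution on $[0,T^\ast]$ with $g\equiv h$ on $\partial D$ automatically satisfies all parabolic compatibility conditions at $t=T^\ast$---but the Leray--Schauder approach never raises the question. Conversely, your continuation argument is perhaps more transparent and avoids appealing to the somewhat heavy fixed-point theorem.
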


\begin{proof}
  Lemma \ref{fairness pres lem} implies that a prospective solution
  $g(t)$ is $\epsilon$-close to $h$ as long as it exists.
  Therefore, we can apply the a priori bounds of Lemma \ref{gradient
    bounds for Dirichlet solutions} on a bounded time interval.
  We may then use the same arguments as in \cite[Theorem 7.1, Chapter
  VII]{LadySoloUral} to show that a smooth solution exists (the
  argument used there is based on the Leray-Schauder fixed point
  argument of \cite[Theorem 6.1 of Chapter V]{LadySoloUral}). Note
  that this argument works for every finite time interval.
\end{proof}

\begin{theorem}\label{interior estimates}
  Let $g\in\M^\infty\left(B_R,[0,T]\right)$ be $\tilde\ep$-close to
  $h$, solving \eqref{DeTurck flow}, where
  $\tilde\epsilon=\tilde\epsilon(n)>0$ is sufficiently small and $h =
  \de$. Then
$$ \sup_{B_{\frac R 2} \timess [0,T] }\left|\gradh^m g \right|^2 
\leq c(m,n, g(0)|_{B_R},T).$$
\end{theorem}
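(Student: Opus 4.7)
The plan is to run a Bernstein-type gradient estimate in the style of Shi, adapted to use a spatial cutoff in place of the Dirichlet boundary condition employed in Lemma \ref{gradient bounds for Dirichlet solutions}. The $\tilde\epsilon$-closeness of $g$ to $h$ makes the evolution equation in Section \ref{evol eq sec} uniformly parabolic with bounded coefficients throughout $B_R\times[0,T]$, so this is a standard quasilinear parabolic interior estimate.

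For the $m=1$ step, fix $\eta\in C_c^\infty(B_R)$ with $\eta\equiv 1$ on $B_{R/2}$, $0\le\eta\le 1$, and $|\gradh^j\eta|\le c_j R^{-j}$ for $j=1,2$, and consider
$$F_1 := \eta^4\,{\uph|\gradh g|}^2 + A\,(\phi_\mu+\psi_\mu-2n),$$
where $\mu=\mu(n)$ is chosen large so that, exactly as in the proof of Lemma \ref{fairness pres lem}, Section \ref{evol eq sec} gives $(\delt-g^{ab}\gradh_a\gradh_b)(\phi_\mu+\psi_\mu-2n)\le -c_0\sum(\gradh g)^2$, and $A=A(n,R)$ is chosen large. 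Differentiating the $g_{ij}$ evolution equation and using Young's inequality one obtains the schematic bound
$$(\delt-g^{ab}\gradh_a\gradh_b)|\gradh g|^2 \le -|\gradh^2 g|^2 + c\,|\gradh g|^4.$$
Commuting the cutoff through, mixed terms of the form $|\gradh\eta|\,\eta^3|\gradh g|\,|\gradh^2 g|$ are absorbed by the good $-\eta^4|\gradh^2 g|^2$, the linear cutoff error $cR^{-2}|\gradh g|^2$ is absorbed by the $-Ac_0|\gradh g|^2$ contribution from $\phi_\mu+\psi_\mu-2n$, and the quartic bad term $c\,\eta^4|\gradh g|^4$ is absorbed at any interior maximum of $F_1$ by noting $\eta^4|\gradh g|^2\le F_1$ and bootstrapping via a first-time continuity argument. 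The parabolic maximum principle on $B_R\times[0,T]$ then bounds $F_1$ by its values on the parabolic boundary: at $t=0$ by $\sup_{B_R}|\gradh g(0)|^2+A\,C(\tilde\epsilon)$, and on $\partial B_R\times[0,T]$ by $A\,C(\tilde\epsilon)$ (since $\eta=0$ there). This yields a bound on $|\gradh g|$ on $B_{R/2}\times[0,T]$ depending only on $n$ and $g(0)|_{B_R}$.

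For $m\ge 2$, induct. Assume bounds for $|\gradh^k g|$ on $B_{R(1/2+2^{-k})}\times[0,T]$ for $k<m$. Apply the same scheme to
$$F_m := \eta_m^{2\alpha_m}|\gradh^m g|^2 + B_m\,\eta_{m-1}^{2\alpha_{m-1}}|\gradh^{m-1} g|^2,$$
where $\eta_m\in C_c^\infty(B_{R(1/2+2^{-m+1})})$ equals $1$ on $B_{R(1/2+2^{-m})}$ and $\alpha_m$, $B_m$ are chosen appropriately. The evolution of $|\gradh^m g|^2$ has the schematic form $(\delt-g^{ab}\gradh_a\gradh_b)|\gradh^m g|^2\le -|\gradh^{m+1}g|^2+P_m$, with $P_m$ polynomial in the lower derivatives; terms involving only $|\gradh^k g|$ with $k<m$ are controlled by the inductive hypothesis, terms linear in $|\gradh^m g|^2$ are absorbed via Young's inequality using the lower-order barrier $B_m\eta_{m-1}^{2\alpha_{m-1}}|\gradh^{m-1} g|^2$, and the good $-|\gradh^{m+1}g|^2$ term handles cross-cutoff terms as in the $m=1$ step.

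The main obstacle throughout is the bookkeeping required to absorb the bad nonlinear terms using only the Bernstein cutoff together with the good $-c_0\sum(\gradh g)^2$ contribution from $\phi_\mu+\psi_\mu-2n$; this is the same delicate calculation that underlies Shi's original proof as cited in Lemma \ref{gradient bounds for Dirichlet solutions}. The only substantive deviation from that proof is that here the control of the parabolic boundary values comes from the spatial vanishing of the cutoff on $\partial B_R$, together with the global bound on $\phi_\mu+\psi_\mu-2n$ provided by $\tilde\epsilon$-closeness, rather than from the Dirichlet condition $g|_{\partial D}=h$.
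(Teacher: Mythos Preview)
Your proposal is correct and takes essentially the same approach as the paper: the paper's entire proof is the sentence ``This follows using the arguments of Shi \cite[Lemma 4.1 and Lemma 4.2]{ShiJDG1989}'', and your Bernstein-with-cutoff scheme is precisely the content of those lemmas, with the Dirichlet boundary control replaced (as you note) by the vanishing of the cutoff together with the global bound on $\phi_\mu+\psi_\mu-2n$ furnished by $\tilde\epsilon$-closeness. Your continuity argument for the quartic term closes because $\sup(\phi_\mu+\psi_\mu-2n)\le C_{\tilde\epsilon}$ is small when $\tilde\epsilon$ is small, so $F_1(0)\le \sup_{B_R}|\gradh g(0)|^2 + AC_{\tilde\epsilon}$ can be kept below the threshold $(Ac_0-C(R))/c$ by choosing $A$ large; this is the point at which the hypothesis ``$\tilde\epsilon(n)$ sufficiently small'' is genuinely used.
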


\begin{proof}
This follows using  the arguments of Shi 
\cite[Lemma 4.1 and Lemma 4.2]{ShiJDG1989}. 
\end{proof}

\begin{theorem}\label{existence thm}
  Let $\epsilon>0$ be given, and $g_0$ be smooth and
  $\ep_0(n,\ep)$-close to the standard Riemannian metric $h=\de$ on
  $\R^n$.  Then there exists a $\ep$-close solution
  $g\in\Mloc^\infty\left(\R^n, [0, \infty)\right)$ to the $h$-flow
  \eqref{DeTurck flow}.
\end{theorem}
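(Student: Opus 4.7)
The strategy is a standard exhaustion-and-compactness argument, combining the Dirichlet existence result (Theorem \ref{existence thm (Dirichlet)}) with the interior estimates (Theorem \ref{interior estimates}).

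Exhaust $\R^n$ by closed balls $D_i := \overline{B_i(0)}$, $i \in \N$. First I construct, for each $i$, smooth approximate initial data $g_0^{(i)}$ on $D_i$ that agrees with $g_0$ on $B_{i-1}$ and equals $h$ in a neighbourhood of $\boundary D_i$. This is done by choosing a smooth cutoff $\eta_i \colon \R^n \to [0,1]$ with $\eta_i \equiv 1$ on $B_{i-1}$ and $\eta_i \equiv 0$ on a neighbourhood of $\boundary D_i$, and setting
$$g_0^{(i)} := \eta_i \, g_0 + (1-\eta_i)\, h.$$
Since $g_0$ is $\ep_0(n,\ep)$-close to $h$ and the set of metrics that are $\ep_0$-close to $h$ (i.e.\ whose eigenvalues w.r.t.\ $h$ lie in $[(1+\ep_0)^{-1},1+\ep_0]$) is convex in the fibre, $g_0^{(i)}$ is again smooth and $\ep_0$-close to $h$ on $D_i$.

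Next, apply Theorem \ref{existence thm (Dirichlet)} on each $D_i$ to obtain a smooth solution $g_i \in \M^\infty\bigl(D_i,[0,\infty)\bigr)$ to the $h$-flow with $g_i(0) = g_0^{(i)}$, $g_i(t)|_{\boundary D_i} = h|_{\boundary D_i}$, and such that $g_i(t)$ is $\ep$-close to $h$ for all $t \geq 0$. Now fix an arbitrary compact cylinder $\ol{B_R(0)} \timess [0,T]$. For all $i$ large enough that $B_{R+2}(0) \subset B_{i-1}$, one has $g_i(0) = g_0$ on $B_{R+2}(0)$, and $g_i$ is a smooth $\tilde\ep$-close solution of \eqref{DeTurck flow} on $B_{R+2}(0)\timess[0,T]$ (after possibly shrinking $\ep$ once and for all so that $\ep \leq \tilde\ep(n)$). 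Theorem \ref{interior estimates} then yields, for every $m \in \N$,
$$\sup_{B_R(0)\timess[0,T]} \bigl|\gradh^m g_i\bigr|^2 \leq c\bigl(m,n,T,g_0|_{B_{R+2}(0)}\bigr),$$
uniformly in $i$. Combined with the uniform $\ep$-closeness, this provides uniform $C^m$ bounds on $g_i$ on every compact subset of $\R^n \timess [0,\infty)$.

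By Arzel\`a--Ascoli and a standard diagonal subsequence argument I extract a subsequence, still denoted $g_i$, that converges in $C^\infty_{\text{\it{loc}}}(\R^n \timess [0,\infty))$ to a limit $g \in \Mloc^\infty\bigl(\R^n,[0,\infty)\bigr)$. Passing to the limit in \eqref{DeTurck flow} shows that $g$ solves the $h$-flow; the $\ep$-closeness is preserved in the limit; and the initial condition $g(0) = g_0$ holds because for any fixed compact $K \subset \R^n$ and all sufficiently large $i$ one has $g_i(0) \equiv g_0$ on $K$.

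The main technical point is making sure the cutoff construction stays inside the $\ep_0$-close class (so that Theorem \ref{existence thm (Dirichlet)} is actually applicable) and that the interior estimates of Theorem \ref{interior estimates} can be applied uniformly in $i$, i.e.\ on a fixed enlargement of the compact set where one wants convergence. Both are routine once the exhaustion is set up as above; everything else is standard compactness.
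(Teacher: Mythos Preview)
Your proposal is correct and follows essentially the same approach as the paper: cut off $g_0$ to equal $h$ near $\partial B_i$, invoke the Dirichlet existence result (Theorem~\ref{existence thm (Dirichlet)}) on each ball, use the interior estimates (Theorem~\ref{interior estimates}) to get uniform $C^m$ bounds on compact space-time cylinders, and extract a diagonal subsequence. The paper's proof is nearly identical, with only cosmetic differences in the cutoff (they take $\eta_i\equiv1$ on $B_{i-2}$) and the fact that they explicitly record bounds on mixed space-time derivatives $\nabla^m(\partial_t)^k g_i$---which of course follow from your spatial bounds via the evolution equation.
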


\begin{proof}
  Let $B_i$ be the balls of radius $i$ and center $0$ (with respect to
  the Euclidean metric).  Set $\upi g_0 = \eta_i g_0 + (1 - \eta_i)h,$
  where $\eta_i:\R^n \to \R$ is smooth, and satisfies $\eta_i = 0 $ on
  $B_i - B_{i-1}$, $\eta_i = 1$ on $B_{i-2},$ $0 \leq \eta \leq 1$ and
  $|\grad^m \eta_i|^2 \leq c(m,n).$ Note that $\upi g_0$ is
  $\ep_0$-close to $h$. Let $\upi g(t)\in\M^\infty\left(B_i,[0,
    \infty)\right)$ be the solution coming from the local existence
  theorem (Theorem \ref{existence thm (Dirichlet)}) above.  Using the
  interior estimates of Theorem \ref{interior estimates} above, we see
  that the solutions all satisfy
$$\sup_{B_j \timess [0,T]}  \left|\grad^m \left(\partt\right)^k 
  \left(\upi g\right)\right|^2 \leq c(j,g_0|_{B_{2j}},m,k,n,T),$$ for
all $i $ big enough, and so, using a diagonal subsequence argument, we
can find a subsequence which converges to a solution
$g\in\Mloc^\infty\left(\R^n,[0, \infty)\right)$ where the convergence
$\upi g\to g$ is uniform on $B_j \timess [0,T] $ for every
fixed $j \in \N$ and $T \in (0, \infty)$ and $g(0) = g_0.$
\end{proof}

\section{Interior Estimates}\label{int est sec}

Note that the following lemma does not imply
$\lambda_i(x,t)-1\to0$ for $|x|\to\infty$ uniformly in $t$, 
i.\,e. 
$$\lim\limits_{R\to\infty}\sup\limits_{x\in\R^n\setminus B_R}
\sup\limits_{i\in\{1,\,\ldots,\,n\}}\sup\limits_{t\in[0,\infty)}
|\lambda_i(x,t)-1|$$
may be nonzero. 

\begin{lemma}\label{decay lem}
  Let $g\in\Mloc^\infty\left(\R^n,(0,\,\infty)\right)$ be a solution
  to \eqref{DeTurck flow} with $g_0$ as in Theorem \ref{thm conv}.
  Then the eigenvalues $(\lambda_i(x,t))$ of $(g_{ij}(x,t))$ with
  respect to $(h_{ij}(x))$ uniformly tend to $1$ in bounded time
  intervals, 
$$\lim\limits_{|x|\to\infty}\lambda_i(x,t)=1.$$  
\end{lemma}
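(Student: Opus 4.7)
The plan is to use the geometric quantity $\Phi := \phi_m + \psi_m - 2n$ from \eqref{phi psi 2n sum}, which measures the deviation of the eigenvalues $\lambda_i$ from $1$, together with a localized maximum-principle/barrier argument that propagates the spatial decay of the initial data.

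From the evolution computations in Section \ref{evol eq sec}, the $\tilde\epsilon$-closeness of $g(t)$ (granted by the hypotheses of Theorem \ref{thm conv}) together with a sufficiently large choice $m=m(n)$ yields the differential inequality
$$\delt \Phi \leq g^{ij}\,\gradh_i\gradh_j\Phi$$
on $\R^n \times (0,\infty)$, exactly as in the proof of Lemma \ref{fairness pres lem}, and $\Phi$ is globally bounded by some $M_0 = M_0(n,\tilde\epsilon)$. By \eqref{phi psi 2n sum} and the hypothesis that $g_0$ is $\epsilon(r)$-close to $h$ on $\R^n\setminus B_r(0)$ with $\epsilon(r)\to 0$, we also have $\sup_{\R^n \setminus B_r}\Phi(\cdot,0) \to 0$ as $r \to \infty$.

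Fix $T>0$. For a point $x_0\in\R^n$ and a parameter $R>0$ with $|x_0|\ge 2R$, I would compare $\Phi$ on $B_R(x_0)\times[0,T]$ with the supersolution
$$v(y,t) := \sup_{B_R(x_0)}\Phi(\cdot,0) + M_0\,w(y,t),$$
where $w$ solves the linear uniformly parabolic Dirichlet problem $\delt w = g^{ij}\gradh_i\gradh_j w$ on $B_R(x_0)\times(0,T]$ with $w \equiv 1$ on $\partial B_R(x_0)\times[0,T]$ and $w(\cdot,0)\equiv 0$. The maximum principle on this bounded cylinder gives $\Phi\leq v$, since $v$ dominates $\Phi$ on the parabolic boundary. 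Standard Gaussian/Aronson bounds for the uniformly parabolic operator $\delt - g^{ij}\gradh_i\gradh_j$, whose ellipticity constants depend only on $n$ and $\tilde\epsilon$, give
$$w(x_0,t)\leq c_1\exp\bigl(-R^2/(c_2 t)\bigr) \leq c_1\exp\bigl(-R^2/(c_2 T)\bigr)$$
for $t\in(0,T]$, with $c_1,c_2$ depending only on $n,\tilde\epsilon$. Evaluating $v$ at the center yields the key estimate
$$\Phi(x_0,t) \leq \sup_{B_R(x_0)}\Phi(\cdot,0) + c_1 M_0\,\exp\bigl(-R^2/(c_2 T)\bigr),$$
uniformly in $t\in[0,T]$.

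Given $\eta>0$, I first choose $R=R(\eta,T)$ large so that the second term is less than $\eta/2$, then take $|x_0|$ so large that $B_R(x_0)\subset \R^n\setminus B_{r(\eta)}(0)$, where $r(\eta)$ is chosen so that $\sup_{\R^n\setminus B_{r(\eta)}}\Phi(\cdot,0) < \eta/2$. This forces $\Phi(x_0,t)<\eta$ uniformly in $t\in[0,T]$ whenever $|x_0|$ is sufficiently large, which is the required uniform decay. Translating via \eqref{phi psi 2n sum} then yields $\abs{\lambda_i(x,t)-1}\to 0$ as $|x|\to\infty$, uniformly on bounded time intervals. The main technical obstacle is justifying the Gaussian upper bound on $w$ uniformly in $x_0$; this relies only on the uniform parabolicity of $\delt - g^{ij}\gradh_i\gradh_j$ provided by $\tilde\epsilon$-closeness, and can be obtained either by invoking the Aronson machinery or, more directly, by constructing an explicit Gaussian supersolution of the form $e^{-\alpha(R^2-|y-x_0|^2)/t}$ on $B_R(x_0)\times[0,T]$.
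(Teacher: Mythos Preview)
Your approach is correct and reaches the same conclusion, but it is not the route the paper takes. The paper deduces Lemma~\ref{decay lem} directly from the interior closeness estimate of Lemma~\ref{interiour fair}, which is proved by a more elementary localization: one multiplies $\Phi=\phi_m+\psi_m-2n$ by a cutoff $\eta\in C^\infty_c(B_1)$, uses the good gradient term in the evolution inequality to absorb the cross terms, and obtains $\delt(\eta\Phi)\le g^{ab}\gradh_a\gradh_b(\eta\Phi)+c$, so the maximum of $\eta\Phi$ grows at most linearly. Parabolic rescaling ($g^\lambda(x,t)=g(x/\lambda,t/\lambda^2)$) then turns this into the statement that $\epsilon$-closeness of $g_0$ on $B_R(x_0)$ propagates to $2\epsilon$-closeness of $g(t)$ on $B_{R/2}(x_0)$ for $t\le\gamma R^2$. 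This avoids any appeal to heat-kernel or Aronson-type bounds; only the weak maximum principle and scale invariance of the $h$-flow are used.

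Your argument instead bounds $\Phi$ by the caloric measure of the lateral boundary and then invokes Gaussian decay of that measure. This is valid via Aronson's estimates (the ellipticity constants are uniform by $\tilde\epsilon$-closeness), and it packages the conclusion in one step without rescaling. One caveat: the explicit barrier $e^{-\alpha(R^2-|y-x_0|^2)/t}$ you suggest at the end is \emph{not} a supersolution near $\partial B_R(x_0)$, since there the left-hand side of the required inequality tends to zero while the second-order term stays bounded below; so the ``more direct'' route needs a different barrier (or you should simply rely on Aronson). The paper's cutoff-plus-scaling argument sidesteps this issue entirely and is the cleaner choice here.
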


This Lemma is a direct consequence of the following interior closeness
estimate.

\begin{lemma}\label{interiour fair}
  Let $g\in\Mloc^\infty\left(\R^n,(0,\infty)\right)$ be a solution to
  \eqref{DeTurck flow} which is $\tilde\epsilon(n)$-close to the
  Euclidean background metric $h$. Assume that $g_0$ is
  $\epsilon$-close to $h$ on some ball $B_R(x_0)$ for some $\epsilon
  \leq\tilde\epsilon(n)/2$. Then there is a constant $\gamma(n)>0$
  such that $g(t)$ is $2\epsilon$-close to $h$ on $B_{R/2}(x_0)$ for
  $t\in[0,\gamma R^2]$.
\end{lemma}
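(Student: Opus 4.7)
My plan is to apply the maximum principle to the scalar quantity $\Phi := \phi_m + \psi_m - 2n$ from \eqref{phi psi 2n sum}, which pointwise measures the deviation of the eigenvalues $\lambda_i$ of $g$ with respect to $h$ from $1$: $\Phi\geq 0$, with equality exactly when all $\lambda_i=1$. Bounds on $\Phi$ translate into closeness bounds. For $m=m(n)$ as chosen in Lemma \ref{fairness pres lem}, $\epsilon$-closeness on $B_R(x_0)$ gives $\Phi(\cdot,0)\leq\alpha(\epsilon,n)$ there with $\alpha(\epsilon,n)=O(\epsilon^2)$, global $\tilde\epsilon$-closeness gives $\Phi\leq A(\tilde\epsilon,n)$ everywhere, and conversely a bound $\Phi(x,t)\leq\beta(\epsilon,n)$ with $\beta$ sufficiently small implies $2\epsilon$-closeness at $(x,t)$.

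From the evolution computations in Section \ref{evol eq sec} reused in Lemma \ref{fairness pres lem}, for $m=m(n)$ large enough and $g$ globally $\tilde\epsilon(n)$-close to $h$, one has the differential inequality
$$\partial_t\Phi\leq g^{ij}\nabla^h_i\nabla^h_j\Phi\qquad\text{on }\R^n\times[0,\infty),$$
where the operator on the right is uniformly elliptic with constants depending only on $n$. The plan is then to construct an explicit spatial barrier that isolates $B_{R/2}(x_0)$ from the rest of $\R^n$ on a parabolic time scale. Fix a smooth non-decreasing $\chi\in C^\infty([0,\infty);[0,1])$ with $\chi\equiv 0$ on $[0,\tfrac12]$, $\chi\equiv 1$ on $[1,\infty)$, and $|\chi'|+|\chi''|\leq c(n)$, and set
$$\psi(x,t):=\alpha+A\,\chi\!\left(\frac{|x-x_0|}{R}\right)+\frac{c_0(n)A\,t}{R^2}.$$
The chain rule yields $|\nabla^{h,2}[A\chi(|x-x_0|/R)]|\leq c(n)A/R^2$, so by the uniform ellipticity of $g^{ij}$, choosing $c_0(n)$ large enough makes $\psi$ a supersolution of $\partial_t-g^{ij}\nabla^h_i\nabla^h_j$. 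Moreover $\psi(\cdot,0)\geq\Phi(\cdot,0)$ holds on $B_R(x_0)$ because there $\psi\geq\alpha\geq\Phi(\cdot,0)$, and outside $B_R(x_0)$ because $\chi=1$ there, giving $\psi\geq A\geq\Phi(\cdot,0)$.

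The comparison principle on $\R^n$, applied to the bounded functions $\Phi$ and $\psi$ (both of which remain uniformly bounded since $g(t)$ is globally $\tilde\epsilon$-close to $h$), then yields $\Phi\leq\psi$ on $\R^n\times[0,\infty)$. Restricting to $(x,t)\in B_{R/2}(x_0)\times[0,\gamma R^2]$, the spatial cutoff vanishes and we obtain $\Phi(x,t)\leq\alpha+c_0(n)A\gamma$. Choosing $\gamma=\gamma(n)>0$ sufficiently small forces this upper bound below the threshold $\beta(\epsilon,n)$ needed to conclude $2\epsilon$-closeness on $B_{R/2}(x_0)$. The main obstacle will be tracking the quantitative relationship between $\Phi$ and the closeness parameter sharply enough that $\gamma$ can be taken to depend only on $n$; here the constraint $\epsilon\leq\tilde\epsilon(n)/2$ and the fact that for fixed $m=m(n)$ and $\tilde\epsilon=\tilde\epsilon(n)$ small the factor $(1+\tilde\epsilon)^m$ is controlled by $c(n)$ allow the unfavorable powers of $\lambda_i$ to be absorbed. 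The comparison principle on $\R^n$ for bounded sub/supersolutions of uniformly parabolic operators with bounded coefficients is then standard.
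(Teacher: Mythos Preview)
Your approach is correct and closely parallels the paper's, differing only in the localization technique. Both arguments rest on the same scalar $\Phi=\phi_m+\psi_m-2n$ and its evolution inequality from Section~\ref{evol eq sec}. The paper first rescales to $R=1$, then multiplies $\Phi$ by a compactly supported cutoff $\eta\in C^\infty_c(B_1)$ with $\eta\equiv1$ on $B_{1/2}$, and shows that $\zeta:=\eta\,\Phi$ satisfies $\partial_t\zeta\le g^{ab}\gradh_a\gradh_b\zeta+c(n)$; here the good gradient term $-\sum(\gradh_k g_{ij})^2$ in the evolution of $\Phi$ is essential to absorb the cross term $2g^{ab}\gradh_a\eta\,\gradh_b\Phi$ via Young's inequality and $|\nabla\eta|^2/\eta\le C$. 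The maximum principle then gives linear growth of $\max\zeta$. You instead build an explicit spatial barrier and invoke comparison on all of $\R^n$; this is slightly cleaner in that it uses only the weaker inequality $\partial_t\Phi\le g^{ij}\gradh_i\gradh_j\Phi$ (no need for the gradient term), at the price of needing a noncompact comparison principle for bounded functions, which as you note is standard. The paper's approach has the minor technical convenience that $\zeta$ is compactly supported, making the maximum principle on $\R^n$ trivial. Both routes deliver the identical bound $\Phi(x,t)\le\alpha(\epsilon,n)+c(n)\,t/R^2$ on $B_{R/2}(x_0)$, and the paper's proof treats the final conversion of this into $2\epsilon$-closeness with the same brevity that you flag under ``main obstacle''.
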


\begin{proof} 
According to the estimates in Section \ref{evol eq sec}, there
exists $\epsilon(n)>0$ and $m(n) \in \N$ such that
$$\fracp{}t(\phi_m+\psi_m-2n)\leq
g^{ab}\gradh_a\gradh_b(\phi_m+\psi_m
-2n)-\sum\limits_{i,\,j,\,k}\left(\gradh_kg_{ij}\right)^2\ . $$

If the background metric $h$ is the standard flat metric on $\R^n$,
we define for $\lambda>0$ 
$$g^\lambda(x,t)=g\left(\frac x\lambda,\frac t{\lambda^2}\right).$$
If $g$ is a solution to the $h$-flow, $g^\lambda$ is also a
solution to the $h$-flow as $h$ is the standard metric on $\R^n$. 
Moreover, $g^\lambda$ is $\epsilon$-close to $h$, if $g$ is
$\epsilon$-close to $h$. 

By scaling  
with the factor $\lambda=1/R$ and translation we can assume
that $R=1$ and $x_0=0$.  Let $\eta \in C^\infty_c(B_1)$ be such that
$0\leq \eta\leq 1$ and $\eta \equiv 1$ on $B_{1/2}$ and let
$$\zeta:=\eta \cdot (\phi_m+\psi_m-2n)\ .$$
We compute
\begin{align*} \fracp{}t\zeta\leq&\,\ 
g^{ab}\gradh_a\gradh_b \zeta - 2 g^{ab}\gradh_a \eta
\gradh_b(\phi_m+\psi_m-2n) \\
&\,- (\phi_m+\psi_m-2n) g^{ab}\gradh_a\gradh_b \eta
- \eta \sum\limits_{i,\,j,\,k}\left(\gradh_kg_{ij}\right)^2\\
\leq&\,\  g^{ab}\gradh_a\gradh_b \zeta + c\ , 
\end{align*} 
since $\big|\gradh(\phi_m+\psi_m-2n)\big|^2 \leq c
 \sum\limits_{i,\,j,\,k}\left(\gradh_kg_{ij}\right)^2$ and $|\nabla
 \eta|^2/\eta \leq C(n,\Vert\eta\Vert_{C^2})$. So by the
 maximum principle the maximum of $\zeta$ grows at most linearly,
 which implies the stated estimate. \end{proof}

By scaling we want to extend a priori derivative estimates for
the metric to balls of any radius. If the background metric $h$ 
is not necessarily flat, the third author \cite{MilesC0}
obtained the following
\begin{theorem}\label{Miles int est}
  Let $R>0$. Let $h$ be a complete background metric of bounded
  curvature. Fix $T=T(n,R,h)>0$ sufficiently small.  Fix a point
  $x_0$.  Let $g$ be a solution to the $h$-flow on
  ${}^{{}^h}\!B_R(x_0) \timess(0,T)$ which is $\tilde\epsilon$-close
  to the background metric $h$ for
  $\tilde\epsilon=\tilde\epsilon(n)>0$ fixed sufficiently
  small, where ${}^{{}^h}\!B_R(x_0)$ denotes a geodesic ball of radius
  $R$ with respect to the metric $h$. Then
$$\left|\gradh^ig(x,t)\right|\leq\frac{c(n,i,R)}{t^{i/2}}$$
for all $(x,t)\in{}^{{}^h}\!B_{R/2}(x_0)\timess(0,T)$ and all $i\in\N$. 
\end{theorem}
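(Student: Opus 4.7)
The plan is to prove this via Bernstein-type arguments in the style of Shi \cite{ShiJDG1989}, adapted to an arbitrary smooth background metric $h$ of bounded curvature (Shi worked with $h = g(0)$). I proceed by induction on the derivative order $i$. First I would construct an $h$-cutoff function $\eta \in C^\infty_c\bigl({}^{{}^h}\!B_R(x_0)\bigr)$ with $\eta \equiv 1$ on ${}^{{}^h}\!B_{R/2}(x_0)$ and $|\gradh^k \eta| \leq c(k,R,h)$, available because the $h$-distance function is smooth away from $x_0$ and its cut locus, and bounded curvature gives uniform control of its derivatives after regularization. The $\tilde\epsilon$-closeness of $g$ to $h$ is used throughout to keep $g^{ab}\gradh_a\gradh_b$ uniformly elliptic with constants depending only on $n$.

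For the base case $i=1$, I would extend the computation of Section \ref{evol eq sec} to non-flat $h$. Commuting $\gradh$ derivatives and differentiating the flow equation introduces $\Riem(h)$-terms, so schematically
\begin{equation*}
\partial_t |\gradh g|^2 \leq g^{ab}\gradh_a\gradh_b |\gradh g|^2 - 2|\gradh^2 g|^2 + c(n)|\gradh g|^4 + C(n,h)\bigl(|\gradh g|^2 + 1\bigr),
\end{equation*}
while the closeness quantity $\Phi := \phi_m + \psi_m - 2n$ from Section \ref{evol eq sec} satisfies a parallel inequality
\begin{equation*}
\partial_t \Phi \leq g^{ab}\gradh_a\gradh_b \Phi - c_1(n)|\gradh g|^2 + C(n,h).
\end{equation*}
Following Shi, I would then form the test quantity
\begin{equation*}
F := \frac{\eta^2 \, t \, |\gradh g|^2}{(K - \Phi)^2} + A\, \Phi,
\end{equation*}
where $\tilde\epsilon$ is small enough that $K-\Phi \geq K/2$ and $A = A(n)$ is large. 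At an interior space-time maximum with $t > 0$, the bad $|\gradh g|^4$ term is to be absorbed jointly by the negative reaction $-Ac_1 |\gradh g|^2$ and by the monotonicity in the denominator, while the $C(n,h)$ contributions stay tolerable once $T=T(n,R,h)$ is chosen small. The maximum principle on ${}^{{}^h}\!B_R(x_0) \times [0,T]$ then yields $|\gradh g|^2 \leq c(n,R,h)/t$ on ${}^{{}^h}\!B_{R/2}(x_0)$.

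For the induction step, assuming $|\gradh^j g|^2 \leq c_j(n,R,h)/t^j$ for $1 \leq j \leq i-1$, I would derive
\begin{equation*}
\partial_t |\gradh^i g|^2 \leq g^{ab}\gradh_a\gradh_b |\gradh^i g|^2 - 2|\gradh^{i+1} g|^2 + P_i,
\end{equation*}
with $P_i$ polynomial in $g$, $g^{-1}$, $\gradh^j g$ ($j\leq i$) and $\gradh^k\Riem(h)$ ($k \leq i-1$), and at worst quadratic in $\gradh^i g$. The standard Bernstein combination
\begin{equation*}
F_i := \eta^{2}\, t^i\, |\gradh^i g|^2 + B\, t^{i-1}\, |\gradh^{i-1} g|^2,
\end{equation*}
with $B = B(n,i,R,h)$ large, uses the $-2|\gradh^i g|^2$ from the evolution of $|\gradh^{i-1}g|^2$ to absorb the dominant quadratic part of $P_i$, while the inductive hypotheses control all remaining terms. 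The maximum principle then gives $|\gradh^i g|^2 \leq c(n,i,R,h)/t^i$ on ${}^{{}^h}\!B_{R/2}(x_0)$.

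The hard part will be controlling the non-flatness of $h$: at every order $i$, contractions with $\gradh^k\Riem(h)$ enter as lower-order forcing terms whose polynomial accumulation forces a short-time interval $T=T(n,R,h)$. A subtler technical point is the design of the base-case test function: without a rational factor like $(K-\Phi)^{-2}$ (or an equivalent device that converts the bad $|\gradh g|^4$ into a term dominated by $-|\gradh g|^2$ coming from $\Phi$), the maximum-principle argument fails, so calibrating $\tilde\epsilon(n)$, $K$, $A$, $B$, and $T$ consistently across all orders is the bulk of the technical work.
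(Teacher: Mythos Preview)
The paper does not actually prove this theorem: it is quoted from \cite{MilesC0} (``the third author obtained the following''), and the text moves straight to Corollary~\ref{int est} by scaling. So there is no in-paper proof to compare against.

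That said, your proposal is in the right spirit and is essentially the strategy of \cite{MilesC0}: one redoes Shi's Bernstein-type interior estimates \cite{ShiJDG1989} with an arbitrary bounded-curvature background $h$ in place of $h=g(0)$, so that extra lower-order terms involving $\gradh^k\Riem(h)$ appear at each order and force the short-time restriction $T=T(n,R,h)$. The inductive combination $F_i$ with the good $-2|\gradh^i g|^2$ term from the evolution of $|\gradh^{i-1}g|^2$ is exactly the mechanism used there. One point worth flagging: your base-case test quantity $\eta^2 t\,|\gradh g|^2/(K-\Phi)^2 + A\Phi$ is a reasonable device, but it is not literally the one in \cite{MilesC0}; there the argument pairs $t|\gradh g|^2$ with a large multiple of a scalar built from $\phi_m$ (rather than a rational function of $\Phi$) and absorbs the quartic term using the good $-|\gradh g|^2$ reaction in the $\phi_m$-evolution together with $\tilde\epsilon$-closeness. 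Either packaging works, but if you carry yours out you should check carefully that the cross-terms from differentiating $(K-\Phi)^{-2}$ and $\eta^2$ do not regenerate an uncontrolled $|\gradh g|^4$; this is where the calibration you mention actually bites.
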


 Applying Theorem \ref{Miles int est} to $g^\lambda$ as defined
in the proof of Lemma \ref{interiour fair}, we get directly
\begin{corollary}\label{int est}
Let $h$ denote the standard flat metric on $\R^n$ and assume that
$\epsilon=\epsilon(n)>0$ and $\gamma=\gamma(n)>0$ 
are chosen sufficiently small. If 
$g:\R^n\timess(0,T)$ solves the $h$-flow in $B_R(x_0)\timess
\left(0,\gamma R^2\right)$ for some $x_0\in\R^n$, $R>0$, 
and if $g$ is $\epsilon$-close to $h$ on
$B_R(x_0)\timess\left(0,\gamma R^2\right)$, then we
get the a priori estimates 
$$\left|\gradh^ig(x,t)\right|\leq\frac{c(n,i)}
{t^{i/2}}$$
for all $(x,t)\in B_{R/2}(x_0)\timess\left(0,\gamma R^2\right)$ 
and all $i\in\N$. We set $c_1:=c(n,1)$.
\end{corollary}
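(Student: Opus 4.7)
The plan is to reduce to the case $R=1$ by parabolic rescaling and then invoke Theorem \ref{Miles int est} directly, exploiting the scale-invariance of the flat background metric $h=\delta$.

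First I would set $\lambda = 1/R$ and define the rescaled metric $g^\lambda(x,t) = g(x/\lambda, t/\lambda^2) = g(Rx, R^2 t)$, as in the proof of Lemma \ref{interiour fair}. Since $h$ is the standard Euclidean metric, $h$ is invariant under the diffeomorphism $x \mapsto x/\lambda$, so $g^\lambda$ again satisfies the $h$-flow. Moreover, pointwise closeness to $h$ is preserved: $g^\lambda$ is $\epsilon$-close to $h$ on $B_1(x_0/R) \times (0, \gamma)$ (after translating, on $B_1(0) \times (0,\gamma)$).

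Next I would apply Theorem \ref{Miles int est} with $R=1$, the flat $h$, and $T = \gamma$; the theorem requires $T = T(n,R,h)$ sufficiently small, and for this fixed unit configuration it fixes an allowed threshold which I call $\gamma(n)$. This yields, for the rescaled solution,
\begin{equation*}
\left|\gradh^i g^\lambda(x,t)\right| \leq \frac{c(n,i)}{t^{i/2}} \qquad \text{on } B_{1/2}(0) \times (0, \gamma).
\end{equation*}

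Finally I would undo the scaling. Since $\gradh$ is ordinary partial differentiation in the Euclidean chart, the chain rule gives $\gradh^i g^\lambda(x,t) = R^i \, (\gradh^i g)(Rx, R^2 t)$. Writing $(y,s) = (Rx, R^2 t)$, the range $x \in B_{1/2}(0)$, $t\in(0,\gamma)$ corresponds exactly to $y \in B_{R/2}(x_0)$, $s \in (0,\gamma R^2)$, and
\begin{equation*}
\left|\gradh^i g(y,s)\right| = R^{-i} \left|\gradh^i g^\lambda(y/R,\, s/R^2)\right| \leq R^{-i}\cdot\frac{c(n,i)}{(s/R^2)^{i/2}} = \frac{c(n,i)}{s^{i/2}},
\end{equation*}
so the factors $R^{\pm i}$ cancel and one obtains the claimed $R$-independent estimate.

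There is no real obstacle here: the only points to verify are that $h$-flow is preserved under parabolic rescaling (which uses flatness of $h$ crucially) and that the $R$-dependence drops out, both of which are automatic once the rescaling is set up correctly. The whole content of the corollary is packaging Theorem \ref{Miles int est} in a scale-invariant form suitable for the flat target setting.
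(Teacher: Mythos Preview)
Your proposal is correct and is precisely the paper's approach: the paper's proof is the single sentence ``Applying Theorem \ref{Miles int est} to $g^\lambda$ as defined in the proof of Lemma \ref{interiour fair}, we get directly'', and you have spelled out exactly that rescaling argument, including the verification that the $R$-dependence cancels.
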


\begin{proof}[Proof of Theorem \ref{thm one}]
   Let us first assume that
  $g_0\in\Mloc^\infty\left(\R^n\right)$.  Theorem \ref{existence thm}
  implies that an $\epsilon$-close solution
  $g\in\Mloc^\infty\left(\R^n,(0,\infty)\right)$ to \eqref{DeTurck
    flow} exists.  The interior decay estimates of Corollary \ref{int
    est} and Arzelà-Ascoli imply that the metrics $g(t)$ converge
  subsequentially in $\Mloc^\infty$ to a complete flat metric
  as $t\to\infty$.

  We approximate $g_0\in\M_0\left(\R^n\right)$ by smooth metrics $\upi
  g_0$ preserving the $\epsilon_0$-closeness. Thus we obtain
  solutions $\upi g\in\Mloc^\infty$ which are $\epsilon$-close to $h$.
  In view of the interior a priori estimates we obtain a limiting
  solution $g\in\Mloc^\infty\left(\R^n,(0,\infty)\right)$.

  Note that $g(t)\to g_0$ in $\Mloc^0$ as $t\searrow0$: Fix a point
  $x_0\in\R^n$ and use $g_0(x_0)$ as a flat background metric on
  $\R^n$. Thus the metrics $\upi g$ are again solutions to the
  $h$-flow with $h=g_0(x_0)$. Hence we can apply the interior
  closeness estimates of Lemma \ref{interiour fair} in order to see
  that all metrics $\upi g$ attain their initial values uniformly in
  $i$. That implies that $g(t)\to g_0$ as $t\searrow0$ locally
  uniformly.
\end{proof}

\section{Integral Estimates}\label{integral est sec}
In this section, we are once again concerned with the 
quantity $\phi_m+\psi_m-2n$ as introduced in \eqref{phi psi 2n sum}.

\begin{theorem}\label{int delta thm}
  Fix $\delta>0$, $m=m(n)\in\N$, and $p\geq 1$.  Let
  $g\in\Mloc^\infty\left(\R^n,(0,\infty)\right)$ be a solution to
  \eqref{DeTurck flow} which is $\tilde\epsilon(n,m)$-close to the
  standard Euclidean metric $h$ for some
  $1\geq\tilde\epsilon=\tilde\epsilon(n,m)>0$ sufficiently small.
  Let $g_0$ be as in Theorem \ref{thm conv}.  Then the integral
$$I(t)\equiv I^{m,p}_{\delta}(t) 
:=\frac1p\int\limits_{\R^n}(\phi_m+\psi_m-2n-\delta)_+^p\,dx 
\equiv\frac1p\int\limits_{\R^n}\Phi^p_{m,\delta}$$
is non-increasing in time.
\end{theorem}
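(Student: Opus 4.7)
My plan is to differentiate $I(t)$ in time, insert the pointwise evolution inequality for $\Phi := \phi_m+\psi_m-2n$ from Section \ref{evol eq sec}, integrate by parts, and absorb the non-divergence cross term produced thereby into the strongly negative gradient term already present in the evolution of $\Phi$. Combining the evolution inequalities for $\phi_m$ and $\psi_m$ displayed in Section \ref{evol eq sec}, I would first note that for $m=m(n)$ sufficiently large and $\tilde\epsilon=\tilde\epsilon(n,m)$ correspondingly small, the favourable coefficient $m(m-1)(1+\tilde\epsilon)^{-(m+3)}$ in the $\phi_m$-estimate dominates the positive coefficient $c(n)m(1+\tilde\epsilon)^{m+1}$ appearing in the $\psi_m$-estimate, giving
\[
\partial_t\Phi \leq g^{ij}\gradh_i\gradh_j\Phi - C_m\sum_{i,j,k}(\gradh_k g_{ij})^2, \qquad C_m \geq c(n)\,m^2.
\]

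Before manipulating integrals I would verify two finiteness properties. First, Lemma \ref{decay lem} (which applies under the hypotheses of Theorem \ref{thm conv}) gives $\lambda_i(x,t)\to 1$ uniformly on bounded time intervals as $|x|\to\infty$, so $\Phi(x,t)<\delta$ outside a sufficiently large Euclidean ball and $\Phi_{m,\delta} := (\Phi-\delta)_+$ is compactly supported in $x$ for each $t$. Second, the interior gradient estimate of Corollary \ref{int est} furnishes the pointwise $|\gradh g|$ control needed to justify integration by parts and dominated convergence. These facts together make $I(t)<\infty$ and force all boundary contributions at infinity to vanish.

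To handle both $p>1$ and the borderline $p=1$ uniformly, I would approximate $s\mapsto s_+^p/p$ from below by smooth convex functions $f_\eta$ with $f_\eta\nearrow(\cdot)_+^p/p$, $f_\eta'\nearrow(\cdot)_+^{p-1}$ and $f_\eta''\geq 0$. Differentiating $\int f_\eta(\Phi-\delta)$, inserting the evolution inequality, and integrating by parts yields
\[
\frac{d}{dt}\int f_\eta(\Phi-\delta) \leq -\int f_\eta''(\Phi-\delta)\,g^{ij}\gradh_i\Phi\,\gradh_j\Phi - \int f_\eta'(\Phi-\delta)(\gradh_i g^{ij})\gradh_j\Phi - C_m\int f_\eta'(\Phi-\delta)\sum_{i,j,k}(\gradh_k g_{ij})^2.
\]
The first term is $\leq 0$ by convexity of $f_\eta$. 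From the algebraic definitions \eqref{phi def eqn}, \eqref{psi def eqn} and $\tilde\epsilon$-closeness one obtains the pointwise bounds $|\gradh\Phi|\leq c(n)\,m\,|\gradh g|$ and $|\gradh g^{-1}|\leq c(n)\,|\gradh g|$, so the cross term is bounded in absolute value by $c(n)\,m\int f_\eta'(\Phi-\delta)\sum(\gradh_k g_{ij})^2$. Since $C_m\sim m^2$ beats this linear-in-$m$ competitor, fixing $m=m(n)$ large enough forces the right-hand side to be non-positive. Passing $\eta\to 0$ by dominated convergence (majorants supplied by the compact spatial support and Corollary \ref{int est}) gives $\dot I(t)\leq 0$.

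The obstacles are essentially bookkeeping rather than geometric: one has to keep track that $\tilde\epsilon(n,m)$ must shrink as $m$ grows (but since only a single $m=m(n)$ is ultimately fixed, there is no circularity), carefully verify the algebraic pointwise bounds on $\gradh\Phi$ and $\gradh g^{-1}$ that drive the absorption, and carry out the mollification cleanly enough that the $p=1$ endpoint sits on equal footing with $p>1$. Beyond the pointwise evolution inequality from Section \ref{evol eq sec} and the uniform spatial decay from Lemma \ref{decay lem}, no further geometric input is required.
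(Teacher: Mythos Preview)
Your argument is correct and shares the same analytic core as the paper's proof---the pointwise evolution inequality for $\Phi$, the compact support furnished by Lemma \ref{decay lem}, and the absorption of the cross term $(\gradh_i g^{ij})\gradh_j\Phi$ into the strong negative gradient term by choosing $m=m(n)$ large (linear in $m$ beaten by quadratic in $m$). The technical packaging differs: the paper works directly with $(\Phi-\delta)_+^{p-1}$ on the super-level set $U_\delta=\{\Phi>\delta\}$, invoking Sard's theorem to secure a smooth $\partial U_\delta$ for almost every $\delta$, and then handles the boundary term by observing that $\gradh\Phi$ is antiparallel to the outer normal there; the potentially singular factor $\Phi_{m,\delta}^{p-2}$ (for $1\le p<2$) is tamed by a further approximation $\delta_i\searrow\delta$. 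Your route---regularising the convex function $s\mapsto s_+^p/p$ by smooth $f_\eta$---sidesteps both the level-set geometry and the $p-2$ singularity in one stroke, at the modest cost of a monotone/dominated convergence passage in $\eta$. Either device delivers the same inequality; yours is arguably the more streamlined way to treat all $p\ge1$ uniformly, while the paper's has the virtue of making the geometric sign of the boundary contribution explicit.
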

\begin{proof}
  As $g$ is $\tilde\epsilon$-close to $h$, $\phi_m+\psi_m-2n$ is
  uniformly bounded above.  According to Lemma \ref{decay lem}, we see
  that $\phi_m+\psi_m-2n\to0$ for $|x|\to\infty$ and $t$ bounded
  above.  Thus the positive part of $\phi_m+\psi_m-2n-\delta$ is
  contained in a compact set for every bounded time interval and
  $I(t)$ is finite there.

Assume that in appropriate coordinates, we have at a fixed point
$h_{ij}=\delta_{ij}$, 
$g_{ij}=\diag(\lambda_1,\,\lambda_2,\,\ldots,\,\lambda_n)$,
$\lambda_i>0$.

Recall from Section \ref{evol eq sec} that we can estimate
in the evolution equation of $\phi_m+\psi_m$ for metrics
$\epsilon$-close to $h$ as follows
\begin{align*}
\fracp{}t(\phi_m+\psi_m)\leq&\,g^{ij}\gradh_i\gradh_j
(\phi_m+\psi_m)\\
&\,-m(m-1)(1+\epsilon)^{-(m-1)}
\sum\limits_{i,\,j,\,k}\left(\gradh_kg_{ij}\right)^2\\
&\,+c(n)m(1+\epsilon)^{m+1}
\sum\limits_{i,\,j,\,k}\left(\gradh_kg_{ij}\right)^2.
\end{align*}

We now want to show that $I(t)$ is decreasing in time for $m$ and 
$\varepsilon$ chosen properly. Note that $\Phi^p_{m,\delta}$ is 
Lipschitz continuous in space and time, and the support of
$\Phi^p_{m,\delta}$ is contained in $B_R(0)$ for some $R>0$ on bounded
time intervals. This yields that
$I(t)$ is Lipschitz continuous in
time as well and thus also absolutely continuous. By Sard's theorem we 
know that for almost every
$\delta>0$ the sets 
$$U_\delta\equiv 
U_\delta(t):=\{x\in\R^n\ |\ \phi_m(x,t)+\psi_m(x,t)-2n>\delta\}$$ 
have a smooth boundary for almost every $t$. For such a $\delta$ and 
$0\leq t_1< t_2$, we can compute
\begin{align*}
 I(t_2)- I(t_1)= &\, \int\limits_{t_1}^{t_2}\frac1p
\frac{d}{d\tau} \int\limits_{\R^n}\Phi_{m,\delta}^p\,
dx\, d\tau\\
= &\,  \int\limits_{t_1}^{t_2} 
\int\limits_{U_\delta}\Phi_{m,\delta}^{p-1}\fracp{}{\tau}
(\phi_m+\psi_m)\,dx\, 
d\tau\ ,
\end{align*}
as $\Phi_{m,\delta}$ is
compactly supported as long as $t$ is finite. Since the boundary of
$U_\delta$ is smooth we may estimate and integrate by parts
\begin{align*}
 I(t_2)-I(t_1)\leq &\, 
\int\limits_{t_1}^{t_2}\int\limits_{U_\delta}\Phi_{m,\delta}^{p-1}
\left(g^{ij}\gradh_i\gradh_j
(\phi_m+\psi_m)\right)\ dx\, d\tau\umbruch \\
&\,-\int\limits_{t_1}^{t_2}\int\limits_{U_\delta}
m(m-1)(1+\epsilon)^{-(m-1)}\Phi_{m,\delta}^{p-1}\gradsquared\ dx\, 
d\tau\umbruch \\
&\,+\int\limits_{t_1}^{t_2}\int\limits_{U_\delta}
c(n)m(1+\epsilon)^{m+1}\Phi_{m,\delta}^{p-1}\gradsquared\ dx\, 
d\tau\umbruch \\
= &\, \int\limits_{t_1}^{t_2}
\int\limits_{\partial U_\delta} \Phi_{m,\delta}^{p-1} \nu_ig^{ij}\gradh_j
\Phi_{m,\delta}\, d\sigma\,dt \umbruch\\
&\, -\int\limits_{t_1}^{t_2}\int\limits_{U_\delta}(p-1)
\Phi_{m,\delta}^{p-2}g^{ij}\gradh_i\Phi_{m,\delta}\gradh_j
\Phi_{m,\delta}\, dx\, d\tau \umbruch\\
&\, -\int\limits_{t_1}^{t_2}\int\limits_{U_\delta}
\Phi_{m,\delta}^{p-1}\Big(\gradh_i g^{ij}\Big)
\gradh_j\Phi_{m,\delta} \, dx\, d\tau \umbruch \\
&\, + \int\limits_{t_1}^{t_2}\int\limits_{U_\delta}
(-m(m-1))(1+\epsilon)^{-m+1}
\Phi_{m,\delta}^{p-1}\gradsquared\ dx\, d\tau\umbruch \\
&\,+\int\limits_{t_1}^{t_2}\int\limits_{U_\delta}
c(n)m(1+\epsilon)^{m+1}\Phi_{m,\delta}^{p-1}
\gradsquared\ dx\, d\tau\umbruch \\
\leq &\,\ m\int\limits_{t_1}^{t_2}\int\limits_{U_\delta}\Phi_{m,\delta}^{p-1}
\gradsquared\cdot\\
&\,\qquad\qquad\cdot\left(-(m-1)(1+\epsilon)^{-m+1}
+c(n)(1+\epsilon)^{m+3}\right)\, dx\, d\tau\ ,
\end{align*}
where we used that ${}^{\up h}\!\left|\gradh\Phi_{m,\delta}\right|
\leq c\,m\,(1+\epsilon)^{m+1}\cdot {}^{\up h}\!\left|\gradh
  g_{ij}\right|$ and that $\gradh \Phi_{m,\delta}$ is
antiparallel to the outer unit normal $\nu$ of $U_\delta$ along
$\partial U_\delta$. Note that the integration by parts involving the
exponent $p-2$ above is justified by applying the divergence
theorem on sets $U_{\delta_i}$ for a sequence $\delta_i \searrow
\delta$, where we can assume by Sard's theorem that $\partial
U_{\delta_i}$ is smooth for all $i$ and that $\partial U_{\delta_i}\to
\partial U_{\delta}$. 
If $m$ and $\epsilon$ are such that 
$$1+c(n)(1+\epsilon)^{2m+2}\leq m,$$
we obtain that the right hand side is nonpositive and our theorem
follows for such a $\delta$. Fix $\tilde\epsilon>0$
accordingly. Since
$$I_{m,p}^{\delta_i}(t)\to I_{m,p}^{\delta}(t)$$ 
for $\delta_i \to \delta$ we obtain the above monotonicity for all
$\delta$.
\end{proof}
\begin{corollary}
  Let $g\in\Mloc^\infty\left(\R^n,(0,\infty)\right)$ be a solution to
  \eqref{DeTurck flow} with $g_0$ as in Theorem \ref{thm conv}. Let
  $m$, $p$, and $\tilde\epsilon$ be as in Theorem \ref{int
    delta thm}.  If $I_0^{m,p}(0)$ is finite, then $I_0^{m,p}(t)$ is
  non-increasing and thus
$$I^{m,p}(t):=\frac1p\int\limits_{\R^n}
(\phi_m+\psi_m-2n)^p\,dx=
I_0^{m,p}(t)\leq I_0^{m,p}(0)<\infty.$$
\end{corollary}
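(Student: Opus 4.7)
The corollary is essentially a limiting case of Theorem \ref{int delta thm} as $\delta\searrow 0$, so the plan is to pass to the limit via monotone convergence.

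First I would note that for every $\delta>0$, Theorem \ref{int delta thm} gives
$$I^{m,p}_\delta(t_2)\leq I^{m,p}_\delta(t_1)\quad\text{for all }0\leq t_1\leq t_2,$$
and in particular $I^{m,p}_\delta(t)\leq I^{m,p}_\delta(0)$. Since \eqref{phi psi 2n sum} shows $\phi_m+\psi_m-2n\geq 0$, the nonnegative pointwise integrand satisfies
$$(\phi_m+\psi_m-2n-\delta)_+^p\;\nearrow\;(\phi_m+\psi_m-2n)^p\quad\text{as }\delta\searrow 0$$
monotonically at every point $x\in\R^n$ and every time $t\geq 0$.

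Next, I would apply the monotone convergence theorem at each fixed time to conclude
$$\lim_{\delta\searrow 0}I^{m,p}_\delta(t)=\frac{1}{p}\int_{\R^n}(\phi_m+\psi_m-2n)^p\,dx=I^{m,p}(t)=I^{m,p}_0(t),$$
valid as an equality in $[0,\infty]$. In particular the hypothesis $I^{m,p}_0(0)<\infty$ together with $I^{m,p}_\delta(0)\leq I^{m,p}_0(0)$ guarantees that each $I^{m,p}_\delta(0)$ is finite, and hence each $I^{m,p}_\delta(t)$ is finite for all $t\geq 0$ by the monotonicity from Theorem \ref{int delta thm}.

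Finally, I would pass to the limit $\delta\searrow 0$ in the monotonicity inequality. For $0\leq t_1\leq t_2$,
$$I^{m,p}_0(t_2)=\lim_{\delta\searrow 0}I^{m,p}_\delta(t_2)\leq\lim_{\delta\searrow 0}I^{m,p}_\delta(t_1)=I^{m,p}_0(t_1),$$
which is the stated monotonicity. Specializing to $t_1=0$ yields $I^{m,p}(t)=I^{m,p}_0(t)\leq I^{m,p}_0(0)<\infty$.

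There is essentially no obstacle here beyond verifying the setup: the only point requiring care is that the finiteness of $I^{m,p}_0(0)$ (and hence of each $I^{m,p}_\delta(0)$) is needed to apply Theorem \ref{int delta thm} meaningfully, since without it the monotone convergence argument would only produce the trivial bound $\infty\leq\infty$. Once finiteness at $t=0$ is assumed, the monotone convergence argument immediately transfers the Sard-type approximation already carried out in the proof of Theorem \ref{int delta thm} from $\delta>0$ to $\delta=0$.
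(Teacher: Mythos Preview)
Your proof is correct and follows essentially the same approach as the paper: apply Theorem~\ref{int delta thm} for each $\delta>0$ and pass to the limit $\delta\searrow 0$ via monotone convergence. The paper's version is terser (it only records $I^{m,p}_\delta(t)\leq I^{m,p}_\delta(0)\leq I^{m,p}_0(0)$ and then lets $\delta\searrow 0$), while you spell out the monotone convergence step and the full two-time monotonicity, but the substance is identical. One small remark: the finiteness of $I^{m,p}_\delta(0)$ for $\delta>0$ does not actually rely on the hypothesis $I^{m,p}_0(0)<\infty$; it already follows from the decay assumption on $g_0$ (via Lemma~\ref{decay lem}), which is what makes Theorem~\ref{int delta thm} applicable in the first place.
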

\begin{proof}
Theorem \ref{int delta thm} implies that
$$I^{m,p}_\delta(t)\leq I^{m,p}_\delta(0)
\leq I^{m,p}_0(0)$$
for every $\delta>0$. For $\delta\searrow0$, we get
$$I^{m,p}_\delta(t)\to I^{m,p}_0(t)$$
and our claim follows. 
\end{proof}

\section{Convergence of Eigenvalues}\label{eigenval conv sec}
\begin{lemma}\label{lambda decay no rate}
  Let $g\in\Mloc^\infty\left(\R^n,(0,\infty)\right)$ be a solution to
  \eqref{DeTurck flow} with $g_0$ as in Theorem \ref{thm conv}, such
  that $g(t)$ is $\tilde\epsilon(n)$-close to $h$ for all $t$.  Then
  the eigenvalues $(\lambda_i)_{1\leq i\leq n}$ of $g(t)$ with respect
  to the background metric $h$ converge uniformly to one as
  $t\to\infty$,
$$\sup\limits_{x\in\R^n}|\lambda_i(x,t)-1|\to0
\quad\text{as }t\to\infty.$$
\end{lemma}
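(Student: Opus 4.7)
The plan is to argue by contradiction, leveraging the monotonicity of $I^{m,1}_\delta(t)$ from Theorem \ref{int delta thm} together with the interior gradient estimate of Corollary \ref{int est}. The key mechanism is that the bound $\uph|\gradh g(\cdot,t)|\leq c_1/\sqrt{t}$ will promote a pointwise eigenvalue deviation from $1$ into a deviation throughout a Euclidean ball of radius comparable to $\sqrt{t}$; the resulting volume, of order $t^{n/2}$, will outrun any finite bound on $I^{m,1}_\delta$. I fix $m=m(n)$ and take $\tilde\epsilon=\tilde\epsilon(n)$ small enough that both this lemma and Theorem \ref{int delta thm} apply, and I work with $p=1$.

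Suppose the conclusion fails. Then there exist $\eta_0>0$, an index $i$, and sequences $x_k\in\R^n$, $t_k\to\infty$ with $|\lambda_i(x_k,t_k)-1|\geq\eta_0$. Applying Corollary \ref{int est} with center at any point and sufficiently large radius gives the pointwise bound $\uph|\gradh g(\cdot,t)|\leq c_1/\sqrt{t}$ on all of $\R^n$. Integrating along straight segments yields
$$\uph|g(y,t_k)-g(x_k,t_k)|\leq\frac{c_1\,|y-x_k|}{\sqrt{t_k}}.$$
Setting $r_k:=\eta_0\sqrt{t_k}/(4c_1)$ and invoking the eigenvalue Lipschitz bound recalled at the end of Section \ref{nota sec}, I obtain $|\lambda_i(y,t_k)-\lambda_i(x_k,t_k)|\leq\eta_0/4$ for every $y\in B_{r_k}(x_k)$, hence $|\lambda_i(y,t_k)-1|\geq 3\eta_0/4$ there. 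Since $\phi_m+\psi_m-2n=\sum_j\lambda_j^{-m}(\lambda_j^m-1)^2$ and all eigenvalues lie in $[(1+\tilde\epsilon)^{-1},1+\tilde\epsilon]$, this forces $\phi_m(y,t_k)+\psi_m(y,t_k)-2n\geq 2\delta$ on $B_{r_k}(x_k)$ for some $\delta=\delta(\eta_0,m,\tilde\epsilon)>0$, so
$$I^{m,1}_\delta(t_k)\geq\omega_n\,\delta\,r_k^n\longrightarrow\infty.$$

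To close the contradiction I only need $I^{m,1}_\delta$ to be finite at a single positive time. I pick any $t_0>0$: Lemma \ref{decay lem} evaluated at $t_0$ gives $\lambda_j(x,t_0)\to 1$ as $|x|\to\infty$, so the positive part of $\phi_m(\cdot,t_0)+\psi_m(\cdot,t_0)-2n-\delta$ has compact support and $I^{m,1}_\delta(t_0)<\infty$. The monotonicity from Theorem \ref{int delta thm} on $[t_0,\infty)$ then yields $I^{m,1}_\delta(t_k)\leq I^{m,1}_\delta(t_0)<\infty$ for all large $k$, which contradicts the divergence above.

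The main subtlety, and where the coordination really happens, is the matching of length scales: the parabolic gradient estimate delivers a bad ball of Euclidean volume $\sim t_k^{n/2}$, precisely the scaling that outruns any finite integral. Correspondingly, one should not attempt to use $I^{m,1}_\delta(0)$, which need not be finite since no decay rate on $g_0$ is assumed at infinity; the finiteness is only available for $t>0$, and it is exactly Lemma \ref{decay lem} that supplies it at any strictly positive initial time of one's choice.
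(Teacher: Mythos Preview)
Your proof is correct and follows essentially the same contradiction argument as the paper: the gradient estimate from Corollary~\ref{int est} spreads the pointwise eigenvalue deviation over a ball of radius $\sim\sqrt{t_k}$, which forces $I^{m,p}_\delta(t_k)\to\infty$ against the monotonicity of Theorem~\ref{int delta thm}. The paper carries out exactly this computation (with general $p\ge1$ and reference time $t=0$).

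One factual correction to your closing commentary: the paper \emph{does} use $I^{m,p}_\delta(0)$, and it is finite. The hypothesis on $g_0$ in Theorem~\ref{thm conv}---that $g_0$ is $\epsilon(r)$-close to $h$ on $\R^n\setminus B_r(0)$ with $\epsilon(r)\to0$---already guarantees that $\phi_m+\psi_m-2n\to0$ as $|x|\to\infty$ at $t=0$, so for any fixed $\delta>0$ the function $(\phi_m+\psi_m-2n-\delta)_+$ has compact support initially. No decay \emph{rate} is needed for this; mere decay suffices, and that is assumed. Your device of starting the monotonicity at a positive time $t_0$ via Lemma~\ref{decay lem} is a valid alternative, but the stated reason for avoiding $t=0$ is not accurate.
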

\begin{proof}
Denote the eigenvalues of $g_{ij}$ with respect to
$h_{ij}$ by $(\lambda_i)$. It is convenient
to assume that $\lambda_1\leq\ldots\leq\lambda_n$. 

Assume that the lemma were false. Then we could find $\zeta>0$ and a
sequence $(x_k,t_k)_{k\in\N}$ in $\R^n\timess[0,\infty)$ with
$t_k\to\infty$ so that
\begin{equation}\label{eigenval deviate}
\max\limits_{i\in\{1,\,\ldots,\,n\}}|\lambda_i(x,t)-1|\geq2\zeta.
\end{equation}

The strategy of the proof is to use Corollary \ref{int est} with $i=1$
to show that we find a controlled spatial neighborhood of $(x_k,t_k)$
such that \eqref{eigenval deviate} is fulfilled with $2\zeta$ replaced
by $\zeta$, i.\,e. the eigenvalues are not all close to one
there.  Therefore, $\phi_m+\psi_m-2n$ is estimated from below by a
positive constant in that neighborhood. For large values of $t$, these
neighborhoods can be chosen arbitrarily large.  Thus $I^{m,p}_\delta$
becomes large, if $\delta$ is chosen small enough. This contradicts
Theorem \ref{int delta thm}.

Here we present the details: let $B_R(x_k)$ denote a 
Euclidean ball around $x_k$. According to Corollary 
\ref{int est}, applied with $i=1$, we obtain
for $x\in\tilde B_R(x_k)$ that the eigenvalues 
$(\lambda_i(x,t_k))$ differ at most by 
$c\cdot R\cdot t_k^{-1/2}$ from the eigenvalues
$(\lambda_i(x_k,t_k))$, 
$$\sup\limits_{i\in\{1,\,\ldots,\,n\}}|\lambda_i(x_k,t_k)-
\lambda_i(x,t_k)|\leq\frac{c_1 R}{\sqrt{t_k}}.$$

We deduce that in a ball of radius at least $R$ 
around $x_k$ with 
$R:=\frac1{c_1}\zeta\sqrt{t_k}$, we have 
$$\max\limits_{i\in\{1,\,\ldots,\,n\}}|\lambda_i(x,t_k)-1|\geq\zeta.$$
Note that this implies
$$\max\limits_{i\in\{1,\,\ldots,\,n\}}
\left|\lambda_i^m(x,t)-1\right|\geq\zeta.$$

So at least one eigenvalue differs significantly from one in that
ball. As our solution is $\tilde\epsilon(n)$-close to $h$ with
$0<\tilde\epsilon(n)\le1$, we still have
$$\tfrac12\leq(1+\tilde\epsilon)^{-1}\leq\lambda_i,\,\lambda_i^{-1}
\leq1+\tilde\epsilon\leq2.$$

We obtain in $B_R(x_k)\timess\{t_k\}$
$$\phi_m+\psi_m-2n=\sum\limits_{i=1}^m
\frac1{\lambda_i^m}\left(\lambda_i^m-1\right)^2
\geq\frac1{2^m}\zeta^2.$$

We define $\delta:=\frac12\frac1{2^m}\zeta^2$ and get in
$B_R(x_k)\timess\{t_k\}$
\begin{align*}
\phi_m+\psi_m-2n\geq2\delta,\umbruch\\
(\phi_m+\psi_m-2n-\delta)_+\geq\delta.
\end{align*}
This allows to estimate $I^{m,p}_\delta(t_k)$ from below
as follows
\begin{align*}
I^{m,p}_\delta(t_k)\geq\, &\,\quad\tfrac1p\!\!\!\!\!\!\!\!
\int\limits_{B_{\frac1{c_1}\zeta\sqrt{t_k}}(x_k)}\!\!\!\!\!\!\!\!\!  
(\phi_m+\psi_m-2n-\delta)_+^p\umbruch\\
\geq&\,\,\frac1p\cdot\delta^p\cdot c(n)\cdot
\left(\frac1{c_1}\zeta\sqrt{t_k}\right)^n.
\end{align*}
We deduce that $I^{m,p}_\delta(t_k)\to\infty$ for
$t_k\to\infty$. This contradicts Theorem \ref{int delta thm}
which implies that 
$$I^{m,p}_\delta(t_k)\leq I^{m,p}_\delta(0)$$
as our initial decay assumption guarantees that for fixed
$m\in\N_+$ as in Theorem \ref{int delta thm}, 
$p\geq1$, and $\delta>0$, the integral $I^{m,p}_\delta(0)$
is finite. 
\end{proof}

\begin{proof}[Proof of Theorem \ref{thm conv}]
Lemma \ref{lambda decay no rate} implies that $g(t)\to h$
in $C^0$ as $t\to\infty$. The convergence of the derivatives
$\gradh^m g(t)$ for all $m\in\N$ 
is a consequence of Corollary \ref{int est}.
\end{proof}

If $I^{m,p}_0(0)$ is finite for some $m$, $p$ as in Section
\ref{integral est sec}, such a calculation implies even a decay rate
in time of
$$\sup\limits_{x\in\R^n}|\lambda_i(x,t)-1|.$$

\begin{lemma}\label{lambda decay with rate}
  Let $g\in\Mloc^\infty\left(\R^n,(0,\infty)\right)$ be a solution to
  \eqref{DeTurck flow} with $g_0$ as in Theorem \ref{thm conv}, such
  that $g(t)$ is $\tilde\epsilon(n)$-close to $h$ for all $t$.  Then
  the eigenvalues $(\lambda_i)_{1\leq i\leq n}$ of $g(t)$ with respect
  to the flat background metric $h$ converge to $1$ as $t\to\infty$:
$$\sup\limits_{x\in\R^n}\sup\limits_{i\in\{1,\,\ldots,\,n\}}
|\lambda_i(x,t)-1|\leq c(n,m,p,c_1)
\cdot\left(I^{m,p}_0(0)\right)^{\frac1{2p+n}}
\cdot\left(\frac1t\right)^{\frac n{2(2p+n)}}$$
if $I^{m,p}_0(0)<\infty$.
\end{lemma}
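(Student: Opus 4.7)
The plan is to carry out a quantitative version of the argument used for Lemma \ref{lambda decay no rate}, now tracking explicit constants, and to use the monotonicity of $I_0^{m,p}$ (from the corollary at the end of Section \ref{integral est sec}) instead of deriving a contradiction. Set $A(t):=\sup_{x\in\R^n}\sup_i|\lambda_i(x,t)-1|$. Since $g(t)$ is $\tilde\epsilon(n)$-close to $h$, we have $A(t)\le\tilde\epsilon(n)\le 1$, and for each $t>0$ we may pick $x_0=x_0(t)\in\R^n$ with $|\lambda_i(x_0,t)-1|\ge A(t)/2$ for some index $i$ (after reordering so that $\lambda_1\le\ldots\le\lambda_n$).

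First I would invoke Corollary \ref{int est} with $i=1$ to obtain the pointwise estimate $|\lambda_i(x,t)-\lambda_i(x_0,t)|\le c_1\,|x-x_0|/\sqrt{t}$ on any Euclidean ball around $x_0$ where the $\tilde\epsilon$-closeness holds, which is all of $\R^n$ by hypothesis. Consequently, on the ball $B_R(x_0)$ with
\[
R:=\frac{A(t)}{4\,c_1}\sqrt{t}\,,
\]
one still has $|\lambda_i(x,t)-1|\ge A(t)/4$. Using $\frac12\le\lambda_j\le 2$ everywhere, the elementary identity \eqref{phi psi 2n sum} yields a lower bound of the form
\[
\phi_m+\psi_m-2n\ge c(n,m)\,A(t)^2\quad\text{on }B_R(x_0)\times\{t\},
\]
since one summand in \eqref{phi psi 2n sum} is at least $c(m)(\lambda_i^m-1)^2\ge c'(m)A(t)^2$.

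Integrating this bound gives
\[
I_0^{m,p}(t)=\frac1p\int_{\R^n}(\phi_m+\psi_m-2n)^p\,dx
\ge\frac1p\bigl(c(n,m)A(t)^2\bigr)^p\,\vol(B_R(x_0))
\ge c(n,m,p,c_1)\,A(t)^{2p+n}\,t^{n/2}.
\]
On the other hand, the corollary in Section \ref{integral est sec} gives $I_0^{m,p}(t)\le I_0^{m,p}(0)$. Combining these two bounds and solving for $A(t)$ produces
\[
A(t)\le c(n,m,p,c_1)\,\bigl(I_0^{m,p}(0)\bigr)^{1/(2p+n)}\,t^{-n/(2(2p+n))},
\]
which is the claimed estimate.

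The main technical point, rather than an obstacle, is that the interior gradient estimate of Corollary \ref{int est} is applied on a ball whose radius $R=A(t)\sqrt{t}/(4c_1)$ grows with $\sqrt{t}$, and we must check that the hypothesis of $\tilde\epsilon$-closeness on $B_R(x_0)\times(0,\gamma R^2)$ required there is satisfied; this is immediate because $g(t)$ is $\tilde\epsilon(n)$-close to $h$ globally and for all times by assumption, so the rescaled estimate of Corollary \ref{int est} applies at any radius. The supremum in $x$ need not be attained, but choosing any $x_0$ realizing half of $A(t)$ is enough, and taking $A(t)=0$ trivially gives the estimate if the supremum is actually zero; otherwise the above chain of inequalities is valid.
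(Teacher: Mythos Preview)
Your proposal is correct and follows essentially the same approach as the paper: both arguments fix a point where some eigenvalue deviates from $1$ by a definite amount, use the gradient estimate of Corollary \ref{int est} to propagate this deviation to a ball of radius proportional to $\sqrt{t}$, lower-bound $\phi_m+\psi_m-2n$ there via \eqref{phi psi 2n sum}, and compare the resulting lower bound on $I_0^{m,p}(t)$ with the monotonicity $I_0^{m,p}(t)\le I_0^{m,p}(0)$. The only cosmetic difference is that the paper fixes an arbitrary $(x_0,t_0,i_0)$ and bounds $\zeta:=\tfrac12|\lambda_{i_0}(x_0,t_0)-1|$ directly, whereas you work with the supremum $A(t)$ and a near-maximizer; the resulting chain of inequalities is the same.
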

\begin{proof}
Fix $x_0\in\R^n$, $t_0>0$, and $i_0\in\{1,\,\ldots,\,n\}$. Define
$$\zeta:=\tfrac12|\lambda_{i_0}(x_0,t_0)-1|.$$
We want to estimate $\zeta$ from above. As in the proof of Lemma
\ref{lambda decay no rate}, we obtain that
\begin{align*}
\zeta\leq&\,\max\limits_{i\in\{1,\,\ldots,\,n\}}
\max\limits_{x\in B_{\frac1{c_1}\zeta\sqrt{t_0}}(x_0)}
|\lambda_i(x,t_0)-1|,\umbruch\\
\phi_m+\psi_m-2n\geq&\,\frac1{2^m}\zeta^2,\umbruch\\
I^{m,p}_0(0)\geq&\,I^{m,p}_0(t_0)\umbruch\\
\geq&\,\frac1p\int\limits_{B_{\frac1{c_1}\zeta\sqrt{t_0}}(x_0)}  
(\phi_m+\psi_m-2n)^p\\
\geq&\,\frac1p\left(\frac1{2^m}\zeta^2\right)^p\cdot c(n)\cdot
\left(\frac1{c_1}\zeta\sqrt{t_0}\right)^n,\umbruch
\intertext{and thus}
(2\zeta)^{2p+n}\leq&\,c(n,m,p,c_1)\cdot
t_0^{-n/2}\cdot I_0^{m,p}(0).
\end{align*}
Our claim follows.
\end{proof}

\section{Improved $C^1$-Estimates}\label{imp est sec}
Based on an iteration of two steps, we can improve our a priori
estimates. So far, we have obtained a priori estimates of the form
\begin{align}\nonumber
|g-h|_{C^0}\leq&\,\frac{c\left(n,m,p,c_1(n),I^{m,p}_0\right)}
{t^{\frac n{2(2p+n)}}}\equiv\frac{c_0}{t^{\frac n{2(2p+n)}}},
\intertext{see Lemma \ref{lambda decay with rate}, and}
\Big|\gradh^i g\Big|\leq&\,\frac {c(i,n)}{t^{i/2}}
\equiv\frac{c_i}{t^{i/2}}
\label{orig Ci est}
\end{align}
for every $i\in\N$, see Lemma \ref{int est}. 
The two steps to improve these are:

\textit{Step 1:}
Interpolation inequalities of the form
$\Vert Du\Vert_{L^\infty}^2\le c\cdot\Vert u\Vert_{L^\infty}
\cdot\Vert D^2u\Vert_{L^\infty}$ and an 
iteration argument as in \cite[Lemma C.2]{OSKnutAIHP} 
can be applied to a metric $g$ satisfying \eqref{orig Ci est} and
$$|g-h|_{C^0}\leq\frac a{t^\alpha}$$
for some constant $a$.
We obtain for any $0<\beta<\alpha$ the estimate
$$\Big|\gradh^i g\Big|\leq\frac {c((c_k),a,\alpha,\beta)}
{t^{\frac i2+\beta}}.$$

\textit{Step 2:}
Recall that for solutions of the $h$-flow satisfying
$$\left|\gradh g\right|\leq\frac c{t^\gamma}.$$
Arguing as in Lemma \ref{lambda decay with rate} yields
$$|g-h|_{C^0}\leq\frac c{t^{\gamma\frac n{2p+n}}}.$$

\textit{Iteration:} We start with 
$$\left|\gradh g\right|\leq\frac c{t^{\gamma_0}}.$$
According to Corollary \ref{int est} (with $i=1$), we may
take $\gamma_0=\frac12$. So step 2 yields
$$|g-h|_{C^0}\leq\frac {C_1}{t^{\alpha_1}}$$
with $\alpha_1=\frac n{2(2p+n)}$. By combining steps 1 and 2, 
we get
$$|g-h|_{C^0}\leq\frac {C_2(\delta)}{t^{\alpha_2}}$$
with $\alpha_2=\left(\frac12+(1-\delta)\alpha_1\right)
\frac n{2p+n}$ for any fixed $0<\delta<1$. We also get
$$|g-h|_{C^0}\leq\frac {C_k(\delta)}{t^{\alpha_k}}$$
with $\alpha_{k+1}=\left(\frac12+(1-\delta)\alpha_k\right)
\frac n{2p+n}$ for all $k\in\N$.  

It is easy to check that 
$$\frac n{2(2p+n)}=\alpha_1<\alpha_2<...<\frac n{2(2p+\delta n)}.$$
As $\alpha_k$ is increasing in $k$, we see that $\lim\alpha_k$
exists. Passing to the limit in the defining equation for $\alpha_k$
yields 
$$\lim\limits_{k\to\infty}\alpha_k=\frac n{2(2p+\delta n)}.$$
Now $\delta>0$ can be chosen arbitrarily small. So we obtain 
for any $0<\zeta\ll1$
$$|g-h|_{C^0}\leq\frac{c(\zeta)}{t^{\frac n{4p}-\zeta}}.$$
We can finally apply step 1 once again in order to obtain a 
derivative bound.

Thus we have proved the following
\begin{lemma}\label{better decay estimate}
Let $g$ be as in Lemma \ref{lambda decay with rate}. Then we obtain
for every $0<\zeta\ll1$ the estimates
\begin{align}
|g-h|_{C^0}\leq&\,\frac{c(\zeta)}{t^{\frac n{4p}-\zeta}}\nonumber
\intertext{and for every $i\in\N$}
\label{goodestimate} 
\Big|\gradh^i g\Big|\leq&\,\frac{c(\zeta)}{t^{\frac i2+\frac n{4p}-\zeta}}.
\end{align}
\end{lemma}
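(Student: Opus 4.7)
The strategy is a bootstrap between two ingredients that are already available to us. From Corollary \ref{int est} we have the baseline estimate $|\gradh g|\le c_1/\sqrt{t}$, and from Lemma \ref{lambda decay with rate} we have a first power decay of $|g-h|_{C^0}$ at rate $n/(2(2p+n))$. The plan is to iterate a $C^0$-to-$C^1$ step (interpolation) with a $C^1$-to-$C^0$ step (a rerun of the proof of Lemma \ref{lambda decay with rate}), obtaining successively better exponents $\alpha_k$ whose fixed point is $n/(4p)$.

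For \emph{Step 1} I would invoke the standard Nirenberg-type interpolation $\|Du\|_{L^\infty}^2 \le c\,\|u\|_{L^\infty}\,\|D^2 u\|_{L^\infty}$, iterated as in \cite[Lemma C.2]{OSKnutAIHP}, applied to $g-h$. Combined with the already-established bounds $|\gradh^i g|\le c_i/t^{i/2}$ (Corollary \ref{int est}) and the hypothesis $|g-h|_{C^0}\le a/t^\alpha$, this gives, for every $0<\beta<\alpha$ and $i\ge 1$, a bound of the form $|\gradh^i g|\le c/t^{i/2+\beta}$; here the loss from $\alpha$ to $\beta$ arises because the interpolation loses a definite exponent at each step of the iterated argument. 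For \emph{Step 2} I would revisit the proof of Lemma \ref{lambda decay with rate}: if the improved bound $|\gradh g|\le c/t^\gamma$ is known, then in a ball $B_R(x_0)$ of radius $R=C\zeta\,t^\gamma$ (rather than $R=c\zeta\sqrt{t}$) at least one eigenvalue deviates from $1$ by at least $\zeta$, and the integral lower bound on $I^{m,p}_\delta(t)$ becomes $c\,\zeta^{2p}\,R^n$; comparing with the finite $I^{m,p}_0(0)$ gives $|g-h|_{C^0}\le c/t^{\gamma n/(2p+n)}$.

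Now I would combine these. Setting $\gamma_0=\tfrac12$, Step 2 produces $\alpha_1=n/(2(2p+n))$; combining Step 1 (applied with $i=1$ and some small loss $\delta>0$) and Step 2 then yields the recursion
\begin{equation*}
\alpha_{k+1}=\bigl(\tfrac12+(1-\delta)\alpha_k\bigr)\frac{n}{2p+n}.
\end{equation*}
One checks monotonicity: $\alpha_1<\alpha_2<\cdots$, and since the map $x\mapsto (\tfrac12+(1-\delta)x)\,n/(2p+n)$ is a contraction with unique fixed point $n/(2(2p+\delta n))$, $\alpha_k\to n/(2(2p+\delta n))$. Sending $\delta\to 0$ (with the constant $c(\zeta)$ deteriorating accordingly) yields the $C^0$-bound $|g-h|_{C^0}\le c(\zeta)/t^{n/(4p)-\zeta}$ for every $0<\zeta\ll 1$. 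A final application of Step 1 converts this into the derivative estimates \eqref{goodestimate}.

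The main technical obstacle is the honest bookkeeping in Step 1: the interpolation iteration in \cite[Lemma C.2]{OSKnutAIHP} must be carried out uniformly in the base exponent $\alpha$ and with constants that depend on the fixed sequence $(c_i)$ from \eqref{orig Ci est}, so that the small loss $\delta$ and the blow-up of $c(\zeta)$ as $\zeta\to 0$ can be tracked cleanly through infinitely many iterations. Everything else is a soft limiting argument, and Step 2 is literally the proof of Lemma \ref{lambda decay with rate} with the scale $\sqrt{t}$ replaced by $t^{\gamma_k}$.
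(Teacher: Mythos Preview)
Your proposal is correct and follows essentially the same approach as the paper: the same two-step bootstrap (interpolation as in \cite[Lemma C.2]{OSKnutAIHP} for Step~1, a rerun of Lemma~\ref{lambda decay with rate} with the improved gradient scale for Step~2), the same recursion $\alpha_{k+1}=\bigl(\tfrac12+(1-\delta)\alpha_k\bigr)\frac{n}{2p+n}$ with fixed point $n/(2(2p+\delta n))$, and the same final application of Step~1 to obtain \eqref{goodestimate}. Your discussion of the bookkeeping in Step~1 is more explicit than the paper's, but the argument is otherwise identical.
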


\section{Construction of a Solution to Ricci Flow} 
\label{const sol sec}
In this section we construct a solution to Ricci flow for smooth
initial metrics $g_0$ on $\R^n$ which are $\ep$-close to the
standard metric $\de$. It is well known that on a compact manifold, we
can recover the Ricci flow from a solution to the $h$-flow using time
dependent diffeomorphisms \cite{MilesC0,HamiltonSing}. 
We use the a priori estimates of the previous
section and construct diffeomorphisms in our set up which allow us to
construct a solution to the Ricci flow (similar to the compact case). 

\begin{lemma}\label{diffeo lem}
  Let $g\in\Mloc^\infty\left(\R^n,[0,\infty)\right)$ be a solution to
  \eqref{DeTurck flow} which is $\tilde\ep(n)$-close to $h$ (for all
  $t$ with $\tilde\epsilon$ as in Theorem \ref{thm one}). Then there
  exists a smooth family of diffeomorphisms
  $(\phi(\cdot,t))_{t\in[0,\infty)}$,
  $\phi_t\equiv\phi(\cdot,t):\R^n\to\R^n$, with $\phi(x,0)=x$, such
  that $(\phi_t)^*(g(t))$ solves \eqref{ricci flow}.
\end{lemma}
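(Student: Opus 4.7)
The strategy is the standard DeTurck trick, adapted to the non-compact setting of $\R^n$. Define the time-dependent vector field
$$V^i(x,t) := g^{rs}(x,t)\bigl({}^g\Gamma^i_{rs}(x,t) - {}^h\Gamma^i_{rs}(x,t)\bigr),$$
which corresponds to the $1$-form $V_i$ from \eqref{DeTurck flow} after raising an index with $g$. In standard coordinates on $\R^n$ the background Christoffel symbols vanish, so $V^i = g^{rs}\,{}^g\Gamma^i_{rs}$. Since $g$ is smooth on $\R^n\times[0,\infty)$ and $\tilde\epsilon$-close to $h$, both $g$ and $g^{-1}$ are smooth; hence $V$ is a smooth time-dependent vector field on $\R^n\times[0,\infty)$. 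Define $\phi_t$ by
$$\partt\phi_t(x) = -V(\phi_t(x),t),\qquad \phi_0(x)=x.$$

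The decisive estimate is a uniform-in-space bound on $V$. For any $t_0>0$ and $x_0\in\R^n$, Corollary \ref{int est} applied on $B_R(x_0)$ with $R = 2\sqrt{t_0/\gamma}$ and $i=1$ gives $|\gradh g(x_0,t_0)|\le c(n)/\sqrt{t_0}$; combined with the $\tilde\epsilon$-closeness this yields
$$|V(x,t)|\le \frac{C(n)}{\sqrt{t}},\qquad x\in\R^n,\ t>0.$$
Smooth local-in-time solutions of the ODE exist by Picard--Lindel\"of. For global existence in time it suffices that trajectories do not escape to infinity in finite time; integrating along any solution gives
$$|\phi_t(x)-x|\le \int_0^t |V(\phi_s(x),s)|\,ds \le 2C\sqrt{t},$$
so $\phi_t$ is defined and smooth on all of $\R^n\times[0,\infty)$, jointly in $(x,t)$, by standard smooth dependence on initial data.

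To see that each $\phi_t$ is a global diffeomorphism of $\R^n$ I combine properness with covering-space theory. The displacement bound $|\phi_t(x)-x|\le 2C\sqrt{t}$ shows that $\phi_t^{-1}(K)$ is bounded, hence compact, for every compact $K\subset\R^n$, so $\phi_t$ is proper. As the time-$t$ map of the flow of a smooth vector field, $\phi_t$ is also a smooth local diffeomorphism. A proper smooth local diffeomorphism $\R^n\to\R^n$ is a smooth covering map, and since $\R^n$ is simply connected, every such covering is a global diffeomorphism.

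Finally, rewrite \eqref{DeTurck flow} as $\delt g = -2\Ricci(g) + \mathcal{L}_V g$, where $V$ now denotes the vector field with components $V^i$ above. The standard formula for the time derivative of a pulled-back time-dependent tensor along the flow of the time-dependent vector field $-V$ gives, for $\tilde g(t):=\phi_t^* g(t)$,
$$\partt\tilde g = \phi_t^*\bigl(\delt g + \mathcal{L}_{-V}g\bigr) = \phi_t^*\bigl(-2\Ricci(g)\bigr) = -2\Ricci(\tilde g),$$
with $\tilde g(0)=g_0$, so $\tilde g$ solves \eqref{ricci flow}. The main obstacle is the diffeomorphism step: in contrast to the compact case, global existence of the flow $\phi_t$ and the fact that it is bijective onto $\R^n$ are nontrivial, and both rest crucially on the uniform gradient estimate of Corollary \ref{int est}, which controls the displacement by $O(\sqrt t)$ and in particular makes $\phi_t$ proper.
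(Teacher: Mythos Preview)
Your proof is correct, but it follows a genuinely different route from the paper's. The paper never invokes properness or covering-space theory; instead it truncates the vector field, solving $\partial_t\,{}^i\phi=\eta_i V({}^i\phi,t)$ with smooth compactly supported cutoffs $\eta_i$ on an exhaustion $B_i\nearrow\R^n$. Since each $\eta_i V$ has compact support, its flow is automatically a global diffeomorphism, and the same displacement bound $|{}^i\phi(x,t)-x|\le c(n)\sqrt t$ that you derive then shows that ${}^i\phi|_{B_j}$ is eventually independent of $i$, so the limit $\phi$ is well defined and inherits the diffeomorphism property without further argument. Your approach trades this approximation layer for a single topological fact (proper local diffeomorphism of $\R^n$ $\Rightarrow$ covering map $\Rightarrow$ diffeomorphism, since $\R^n$ is simply connected), which is cleaner and more self-contained; one could shorten it further by noting that the backward flow obeys the same $O(\sqrt t)$ displacement bound, giving surjectivity directly, while ODE uniqueness already gives injectivity. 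Note also that you flow along $-V$ and carry out the pullback computation $\partial_t(\phi_t^*g)=\phi_t^*(\partial_t g+\mathcal L_{-V}g)=-2\Ric(\phi_t^*g)$ explicitly, whereas the paper writes the ODE with $+V$ and defers the final step to a citation; your sign is the one consistent with the Lie-derivative identity you invoke.
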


\begin{proof}
 Let ${\upi
\phi}:B_i \timess [0,\infty) \to B_i \subset \R^n$ be
the solutions to the ordinary differential equations 
$$\begin{cases}
\delt {(\upi \phi)}^{\al}(x,t) = \eta_i(\phi(x,t)){V}^{\al}(\phi(x,t),t),&
(x,t)\in B_i×[0,\infty),\\
{\upi \phi}(x,0) = x,&x\in B_i,
\end{cases}$$ where $ {V}^{\al}(y,t) := {g}^{\be \ga} \left({\up{g}
    \Gamma}^{\al}_{\be \ga} - {\uph \Gamma}^{\al}_{\be \ga}\right)
(y,t),$ and $\eta_i: \R^n \to \R$ is a smooth cut-off function $0
\leq\eta\leq 1$ with $\eta|_{B_{\frac i2}} \equiv 1$ and $\eta|_{\R^n
  - B_{i }} \equiv 0.$ Notice that ${}^iV:= \eta_i V$ is smooth and
compactly supported in $B_i$.  Hence the $\phi_i(\cdot,t):B_i \to B_i
$ are diffeomorphisms.

Using again Corollary \ref{int est}
we see that for $x\in B_{i/2}$ and small times
$$\left|\gradh g(x,t)\right|\le\frac{c(n)}{\sqrt{t}.}$$
 This implies that
$$\left|\delt {\left(\upi \phi\right)}^{\al}(x,t)\right| 
= \left|{ \upi V}^{\al}(\upi \phi(x,t),t)\right| \leq {\frac{c(n)}
  {\sqrt{t}}}$$ (here we have used that $g(t)$ is $\ep$-close to
$h$).  In particular, this gives us
\[\left|{\left(\upi \phi\right)}^{\al}(x,t)-x\right|
\leq\int\limits_0^t\left|\frac{\partial}{\partial\tau}
{\left(\upi \phi\right)}^{\al}(x,\tau)\right|\,d\tau\leq c(n)\sqrt t.\]
This implies that ${\upi \phi}(\cdot,t)|_{B_j}$ is
independent of $i$ for $i$ big enough, $j$ fixed and $t$ bounded
above. In particular we obtain, taking a (diagonal
sub-sequence) limit, a well defined solution $\phi:\R^n × [0, \infty)
\to \R^n$ to the equation 
\begin{equation}\label{evoluphi}
\begin{cases}
\delt {(\phi)}^{\al}(x,t) = {V}^{\al}(\phi(x,t),t),&
(x,t)\in \R^n\timess[0,\infty),\\
{\phi}(x,0) = x,&x\in\R^n,
\end{cases}\end{equation}
and thus  $\phi_t = \phi(t, \cdot):\R^n \to \R^n$ is a
diffeomorphism for all $t$. According to \cite{ShiJDG1989}
$(\phi_t)^*(g(t))$ solves \eqref{ricci flow}.
\end{proof}

In the situation that we have the improved decay estimates of Lemma
\ref{better decay estimate} we show that the diffeomorphisms $\phi_t$
converge to a diffeomorphism $\phi_\infty:\R^n \to \R^n$ as $t \to
\infty$ for $n \geq 3$ as long as $p\geq 1$ is small enough.

\begin{theorem}\label{diffeo thm}
  Let $g\in\Mloc^\infty\left(\R^n,[0,\infty)\right)$ be a solution to
  \eqref{DeTurck flow} and $m$, $p$, and $\tilde\epsilon(m,n)$ all be
  as in Theorem \ref{int delta thm}. Assume that \eqref{int p cond}
  holds. For $n\geq3$, $\phi_t$ converges in
  $C^\infty_{\text{\it{loc}}}$ to a diffeomorphism $\phi_\infty:\R^n
  \to \R^n$ as $t \to \infty.$
\end{theorem}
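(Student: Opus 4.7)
The plan is to exhibit $\phi_\infty$ as the $t \to \infty$ limit of the integral curves $\phi_t(x)$ by showing that the driving vector field $V$ is time-integrable uniformly in $x$, to promote convergence to $C^\infty_{\text{\it{loc}}}$ via Gronwall bounds on derivatives, and then to verify the diffeomorphism property via a properness/covering argument. From Lemma \ref{diffeo lem}, $\phi_t$ satisfies $\delt\phi^\alpha(x,t) = V^\alpha(\phi(x,t),t)$ with $V^\alpha = g^{\beta\gamma}\bigl({}^g\Gamma^\alpha_{\beta\gamma} - \Gammah^\alpha_{\beta\gamma}\bigr)$, so $|V| \leq c(n)|\gradh g|$ by $\tilde\epsilon$-closeness. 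Since $g \in \Mloc^\infty(\R^n,[0,\infty))$, $|\gradh g|$ is locally bounded on $\R^n \times [0,1]$; for $t \geq 1$, Lemma \ref{better decay estimate} gives $|\gradh g(\cdot,t)| \leq c(\zeta)\,t^{-1/2 - n/(4p) + \zeta}$. The assumption $p < n/2$ (which requires $n \geq 3$ in order to admit some $p \in [1,n/2)$) lets us fix $\zeta > 0$ so small that $\tfrac12 + \tfrac{n}{4p} - \zeta > 1$, yielding $\sup_{y\in\R^n}|V(y,t)| \in L^1([1,\infty))$; combined with local boundedness on $[0,1]$, $|\phi(x,t)-x|$ is uniformly bounded, and the family $\phi_t$ is Cauchy in $C^0_{\text{\it{loc}}}(\R^n,\R^n)$, converging to a continuous map $\phi_\infty$.

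To upgrade to $C^\infty_{\text{\it{loc}}}$-convergence, differentiate the defining ODE in $x$: the Jacobian $J_t := D\phi_t$ satisfies the linear ODE $\delt J_t = DV(\phi_t,t)\,J_t$ with $J_0 = \mathrm{Id}$, and higher derivatives $D^k\phi_t$ satisfy analogous linear ODEs whose forcings are polynomial combinations of $D^j\phi_t$ ($j \leq k$) and $\gradh^l g$ ($l \leq k+1$) evaluated along $\phi_t$. Lemma \ref{better decay estimate} gives $|\gradh^l g(\cdot,t)| \leq c(\zeta)\,t^{-l/2 - n/(4p) + \zeta}$, which is time-integrable on $[1,\infty)$ for every $l \geq 1$ (using $n/(4p) - \zeta > 0$) and locally bounded on $[0,1]$. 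Applying Gronwall iteratively produces uniform-in-$t$ bounds on $\sup_t \|D^k\phi_t\|_{C^0(K)}$ for every compact $K \subset \R^n$ and every $k$; feeding these bounds back into the linear ODEs shows that each $D^k\phi_t$ is Cauchy in $L^\infty_{\text{\it{loc}}}$ as $t \to \infty$, so $\phi_t \to \phi_\infty$ in $C^\infty_{\text{\it{loc}}}$.

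The same Gronwall argument applied to $\delt J_t^{-1} = -J_t^{-1}\,DV(\phi_t,t)$ produces a uniform bound on $|J_t^{-1}|$, so $D\phi_\infty$ is everywhere invertible and $\phi_\infty$ is a local $C^\infty$-diffeomorphism. The uniform bound $\sup_x|\phi_\infty(x)-x| < \infty$ forces $\phi_\infty$ to be proper, hence a covering map; by simple connectedness of $\R^n$ the cover is trivial and $\phi_\infty$ is a global diffeomorphism. The main obstacle is securing the time-integrability of $V$: the naive decay $|\gradh g| \leq c/\sqrt t$ from Corollary \ref{int est} is exactly borderline and fails to be integrable, so one must invoke the improved rate $t^{-1/2 - n/(4p) + \zeta}$ of Lemma \ref{better decay estimate}, and it is precisely this upgrade that imposes $p < n/2$ and hence the dimensional restriction $n \geq 3$.
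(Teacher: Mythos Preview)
Your proof is correct and the first two stages---time-integrability of $V$ from Lemma~\ref{better decay estimate} under the constraint $p<n/2$, and the inductive Gronwall argument for $D^k\phi_t$---are essentially the paper's argument, only with Gronwall stated explicitly rather than via the $\log\bigl(1+|\nabla^m\phi_t|^2\bigr)$ device the paper uses. Where you genuinely diverge is in establishing that $\phi_\infty$ is a diffeomorphism. The paper does not run Gronwall on $J_t^{-1}$; instead it passes to the pulled-back metric $l(t)=\phi_t^*g(t)$, observes that $l$ solves Ricci flow with $|\Ric(l)|\le c\,t^{-3/2}$ for $t\ge1$, so $l(t)\to l_\infty$ locally uniformly and stays uniformly equivalent to $h$, which forces $\det(D\phi_t)^2\ge c>0$ for all $t$. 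Bijectivity is then obtained by showing that the inverses $F_t=\phi_t^{-1}$ themselves converge in $C^\infty_{\text{\it{loc}}}$ to a map $F_\infty$ with $F_\infty\circ\phi_\infty=\mathrm{id}$.

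Your route---bounding $J_t^{-1}$ directly via $\delt J_t^{-1}=-J_t^{-1}\,DV(\phi_t,t)$ and then invoking properness (from $\sup_x|\phi_\infty(x)-x|<\infty$) plus the covering-space classification over simply connected $\R^n$---is a clean, self-contained ODE/topology argument that never touches Ricci flow. The paper's route is more geometric and yields as a by-product the convergence of the Ricci flow metric $l(t)$, which feeds directly into the proof of Theorem~\ref{thm two}; your approach would have to recover that separately. Either way the dimensional restriction $n\ge3$ enters exactly as you identify: it is what allows $p\in[1,n/2)$ and hence the integrable decay $t^{-1/2-n/(4p)+\zeta}$ that the bare estimate $t^{-1/2}$ from Corollary~\ref{int est} just misses.
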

\begin{proof}
\textit{Step 1:}
Fix $p$ with $\frac n2>p\geq1$ and $\zeta>0$ such that
$\alpha:=\frac12+\frac n{4p}-\zeta>1$. 
In the following, our constants $c$ may depend on
$n$, $p$, $\zeta$, $m$, and $I^{m,p}_0$ as defined in
Theorem \ref{int delta thm}.
\par 
We wish to show that
$$|\phi_t(x) - x | \leq c \quad 
\forall \ t \in [0, \infty).$$
Corollary \ref{int est} implies that 
$$\left|\partt \phi_t(x)\right|\leq \frac c  {t^{1/2}}.$$
Hence $|\phi_t(x)-x|\leq c$ for all $0\leq t\leq1$. 
So we may assume that $t>1$.
The evolution equation \eqref{evoluphi} for $\phi_t$ and the 
fact that $|\grad g| \leq \frac c  {t^{\al}}$ imply that
$$\left|\partt \phi_t(x)\right|\leq \frac c  {t^{\al}}$$
and so we see that (upon integrating from $1$ to $t>1$)
$|\phi_1(x)-\phi_t(x)|\leq c$.
By the triangle inequality we get
$$| \phi_t(x) - x | \leq | \phi_t(x) - \phi_1(x) | + |\phi_1(x) -x |
\leq c.$$ For later use, note that this implies for $F_t:\R^n\to\R^n$
defined by $F_t:=\phi_t^{-1}$
$$| F_t(x) - x | \leq c.$$

\textit{Step 2:} Using the estimate $\left|\partt \phi_t(x)\right|\leq
\frac c {t^{\al}}$ from Step 1, we see that for fixed
$x\in\R^n$, $\phi_t(x) \to y$ as $t \to \infty$ for some $y
\in \R^n.$ We define $\phi_\infty(x) = \lim_{t \to \infty} \phi_t(x).$
All the derivatives of $\phi_t$ (for $t >1$) satisfy
$$\delt\log\left(1+\left|\uph\grad^m \phi_t(x)\right|^2\right) 
\leq\frac{c\left(\phi_1|_{B_r(y)},n,m,p,\zeta,
    (c_k),I^{m,p}_0\right)}{t^{\al}}\quad \forall \ x \in B_r(y) $$ for
some $\al > 1$ and
$$\sup_{x \in B_r(y)} \left|\uph\grad^m \phi_t(x)\right|^2 
\leq c\left(\phi_1|_{B_r(y)},n,m,p,\zeta,(c_k),I^{m,p}_0\right).$$
This may be seen as follows (for the rest of this argument, we set
$\grad:= \uph \grad$).  Differentiating gives for $V$ as in
\eqref{DeTurck flow}
\begin{align}
\left|\nabla^mV(x,t)\right|\leq&
\,c_m\sum\limits_{k=1}^{m+1}\sum\limits_{
\genfrac{}{}{0pt}{}{i_1+\cdots+i_k=m+1}
{1\leq i_1,\,\ldots,\,i_k\le m+1}}
\left|\nabla^{i_1}g\right|\cdots\left|\nabla^{i_k}g\right|,\label{oney}\\
\left|\nabla^m(V(\phi(x,t),t))\right|\leq&
\,c_m\sum\limits_{i=1}^m
\sum\limits_{\genfrac{}{}{0pt}{}{j_1+\cdots+j_i=m}
{1\leq j_1,\,\ldots,\,j_i\le m}}
\left|\left(\nabla^iV\right)(\phi(x,t),t)\right|
\cdot\left|\nabla^{j_1}\phi\right|
\cdots\left|\nabla^{j_i}\phi\right|.\label{twoey}  
\end{align}
Then
\begin{align*}
  \delt\log\left(1+\left|\grad^m \phi_t(x)\right|^2\right)=&\,
  \frac{\delt\left|\grad^m \phi_t(x)\right|^2}
  {\left(1+\left|\grad^m \phi_t(x)\right|^2\right)} \\
  \leq&\,c\,\frac{\left|\grad^m \phi_t(x)\right|\left| \grad^m \delt
      \phi_t(x)\right|}
  { \left(1+\left|\grad^m \phi_t(x)\right|^2\right)}\\
  =&\,c\, \frac{\left|\grad^m \phi_t(x)\right|\left| \grad^m
      (V(\phi_t,t))(x)\right|} { \left(1+\left|\grad^m
        \phi_t(x)\right|^2\right)}.
\end{align*}
Substituting \eqref{oney} and \eqref{twoey} into the above, using the
estimates for $\left|\nabla^k g\right|$ and arguing inductively (for
$m$) leads to the estimate.

Hence $\phi_\infty:\R^n \to \R^n$ is $C^{\infty}$ and
$\phi_t\to\phi_\infty$ converges in $C^\infty_{\text{\it{loc}}}$
as $t\to\infty$.

\textit{Step 3:}
We wish to show that  $\phi_\infty$ is a diffeomorphism.
Let $F_t$ be the inverse of $\phi_t.$
Letting $l(t):= \phi_t^{*} g(t),$ we know that
$l$ solves Ricci flow on $\R^n$.
But then we get in view of \eqref{goodestimate}
\[{}^{\displaystyle{}^{\up l }}\!\!\left|\delt l\right|(x,t) 
=2\cdot{\up l |}\Ricci(l)|(x,t)
 =2\cdot{\up g |}\Ricci(g)|(\phi_t(x),t)
\leq\frac c  { t^{\frac 3 2}} 
\quad\text{for }t\geq1,\]
which is integrable (from $1$ to $\infty$).
This implies that $l(t)$ converges
locally uniformly to a well defined continuous Riemannian metric
$l_{\infty}$ as $t \to \infty$ in view of \cite[Lemma
14.2]{HamiltonThree}.  In particular, there must exist a $c$ with $
{\frac 1 c}h\leq l(x,t) \leq c\,h$ for all $t \in
[0,\infty)$.

Now using the definition of ${l}$ we get
\begin{align*}
{\frac 1 c} \de_{\al \be} \leq l_{\al \be}(y,t)
&\,=       \parti{\phi^s}{y^{\al}}(y,t)  
\parti{\phi^k}{y^{\be}}(y,t)
\,{g}_{s k}(\phi_t(y),t) \\ 
&\,\leq  (1 +\tilde\ep)  \parti{\phi^s}{y^{\al}}
\parti{\phi^k}{y^{\be}}(y,t)
\delta_{sk}\\
&\,=  (1 +\tilde\ep)\left(D\phi\right) 
\left(D\phi\right)^T(y,t).
\end{align*}
In particular, we see that $\det\left(D\phi_t\right)^2(y)
\geq\frac1{(1 +\tilde\ep)c}>0$, where $Df$ is the Jacobian of $f$.
Taking the limit as $t \to \infty$ we get
$\det\left(D\phi_\infty\right)^2(y) \geq{\frac1{(1 +\tilde\ep)c}}>0$.
As $\det\,(D\phi_0)=1$, we see that
$\det\,(D\phi_{\infty})(y)\ge\frac1{\sqrt{(1+\tilde\epsilon)c}}>0$ for
all $y \in \R^n.$ Hence $\phi_{\infty}$ is an immersion.  In view of
the derivative estimates for $\left|\nabla^m\phi_t\right|$, the
function $\phi_\infty:\R^n\to\R^n$ is a smooth diffeomorphism if it is
bijective. Recall that $F_t\equiv F(\cdot,t)=
{\left(\phi_t\right)}^{-1}$ for $t<\infty$.  The estimates for
$\phi_t$ above ensure that there exists a function
$F_\infty:\R^n\to\R^n$ such that $F_t\to F_\infty$ in
$C^\infty_{\text{\it{loc}}}$ as $t\to\infty$.

As $\phi_t$ and $F_t$ are
smooth diffeomorphisms with estimated derivatives,
$\phi_t\to\phi_\infty$ and $F_t\to F_\infty$, we obtain
$$ (F_{\infty} \of \phi_{\infty}) (x) =  \lim_{t \to \infty} 
\left({F}_t \of { \phi}_t\right) (x) = x\quad\text{for all
}x\in\R^n.$$ Thus $\phi_\infty$ is injective. Similarly, we see that
$\phi_\infty$ is surjective. Hence $\phi_\infty$ is a diffeomorphism.
\end{proof}

\begin{proof}[Proof of Theorem \ref{thm two}]
In Lemma \ref{diffeo lem} we have constructed diffeomorphisms
$\phi_t$ so that $(\phi_t^*g(t))_{t\in[0,\infty)}$ solves
\eqref{ricci flow} if $(g(t))_{t\in[0,\infty)}$ solves
\eqref{DeTurck flow}. Define $\tilde g(t):=\phi_t^*g(t)$. 
According to Theorem \ref{thm conv}, we
get $g(t)\to h$. Theorem \ref{diffeo thm} implies that the
diffeomorphisms $\phi_t$ converge to a diffeomorphism
$\phi_\infty$ for $t\to\infty$. 
Thus $\tilde g(t)\to\phi_\infty^*h$ in $C^\infty$
as $t\to\infty$. 
\end{proof}

We can also show that the diffeomorphisms $\phi_t$ are 
close to the identity near infinity, uniformly in $t$.
\begin{lemma}
  Let $g\in\Mloc^\infty\left(\R^n,[0,\infty)\right)$ be a solution to
  \eqref{DeTurck flow} as in Theorem \ref{diffeo thm} and
  $(\phi_t)_{t\in[0,\infty]}$ be the diffeomorphisms of $\R^n$
  constructed before. Then for every $\eta> 0$ there exists an $R>0$
  such that
$$ \sup_{{\R}^n\setminus B_{R}(0)}|\phi_t(x)-x| \leq \eta $$
for all $t\in [0,\infty]$. 
\end{lemma}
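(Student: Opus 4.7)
The strategy is to estimate $|\phi_t(x) - x|$ by
$$|\phi_t(x) - x| \le \int_0^t |V(\phi_s(x), s)|\, ds,$$
where $V$ is the DeTurck vector field from \eqref{DeTurck flow}, and to make the right-hand side uniformly small in $t$ once $|x|$ is large. Since $g(t)$ is $\tilde\epsilon$-close to the flat $h$, we have $|V| \le c\,|\gradh g|$ pointwise. I will split the integral at some $t_0 \ge 1$ and bound the two resulting pieces separately.

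For the \emph{tail} on $[t_0, \infty)$, I invoke Lemma \ref{better decay estimate}. With the choice $1 \le p < n/2$ from Theorem \ref{diffeo thm} and $\zeta > 0$ sufficiently small, we have $|\gradh g(y, s)| \le c\,s^{-\alpha}$ uniformly in $y$, where $\alpha := \tfrac12 + \tfrac{n}{4p} - \zeta > 1$. Consequently
$$\int_{t_0}^\infty |V(\phi_s(x), s)|\, ds \le \frac{c}{(\alpha - 1)\,t_0^{\alpha-1}} < \eta/2,$$
provided $t_0$ is chosen large, independently of $x$.

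For the \emph{initial segment} $[0, t_0]$, Step 1 of the proof of Theorem \ref{diffeo thm} supplies a universal constant $C_0$ with $|\phi_s(x) - x| \le C_0$ for all $s \ge 0$ and all $x \in \R^n$. Hence, whenever $|x|$ is large, the entire trajectory $\{\phi_s(x) : s \in [0, t_0]\}$ lies in $B_{C_0}(x) \subset \R^n \setminus B_R(0)$ for a suitable large $R$. The hypothesis on $g_0$ in Theorem \ref{thm conv} says that $g_0$ is $\epsilon(r)$-close to $h$ on $\R^n \setminus B_r(0)$ with $\epsilon(r) \to 0$, so $g_0$ is very close to $h$ on a ball of radius $R$ around $x$. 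Choosing $R$ so that $\gamma R^2 \ge t_0$, Lemma \ref{interiour fair} propagates this to $|g(s) - h| \le 2\epsilon(R)$ throughout a neighbourhood of the trajectory for $s \in [0, t_0]$.

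To upgrade this $C^0$-smallness to smallness of $\gradh g$ I combine it with the interior bound $|\gradh^2 g| \le c/s$ from Corollary \ref{int est} via the interpolation inequality $\|\gradh u\|_{L^\infty}^2 \le c\,\|u\|_{L^\infty}\,\|\gradh^2 u\|_{L^\infty}$ on nested subballs (the same type used in Lemma \ref{better decay estimate}). This yields $|\gradh g(z, s)| \le c\sqrt{\epsilon(R)/s}$ along the trajectory for $s \in (0, t_0]$, and therefore
$$\int_0^{t_0} |V(\phi_s(x), s)|\, ds \le c\,\sqrt{\epsilon(R)\, t_0} < \eta/2$$
by taking $R$, hence $|x|$, sufficiently large. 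Combining the two bounds gives the lemma. The main obstacle is the initial segment: the raw interior estimate $|\gradh g| \le c/\sqrt s$ of Corollary \ref{int est} is only integrable in $s$ and carries no decay in $|x|$, so producing such spatial decay on $[0, t_0]$ by combining Lemma \ref{interiour fair} with interpolation is the heart of the argument.
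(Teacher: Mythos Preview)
Your proof is correct and follows essentially the same route as the paper: split the time integral, control the tail via Lemma~\ref{better decay estimate}, and control the initial segment by combining the interior closeness estimate (Lemma~\ref{interiour fair}) with interpolation against Corollary~\ref{int est} to obtain $|\gradh g|\le c\sqrt{\epsilon(R)}/\sqrt{s}$ near infinity. The only structural difference is that the paper first establishes $\eta'$-closeness of $g(t)$ to $h$ on $\R^n\setminus B_{R_1}(0)$ for \emph{all} $t\ge0$ (using Lemma~\ref{lambda decay with rate} for $t\ge T$ and Lemma~\ref{interiour fair} for $t\le T$) before interpolating, whereas you only need and prove this closeness on $[0,t_0]$; your version is marginally more economical.
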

\begin{proof} We first want to show that for any given $\eta^\prime>0$
  there exists an $R_1>0$ such that $g(t)$ is $\eta^\prime$-close to
  $h=\delta$ on $\R^n\setminus B_{R_1}(0)$ for all $t$.\\
  By Lemma \ref{lambda decay with rate} we can choose a $T>0$ such
  that $g(t)$ is $\eta^\prime$-close to $h$ on all of $\R^n$ for $t
  \geq T$. Now choose $R_0>0$ such that $g(0)$ is
  $\eta^\prime/2$-close to $h$ on $\R^n\setminus B_{R_0}(0)$. Applying
  the interior closeness estimates, Lemma \ref{interiour fair}, we see
  that we can find $R_1>R_0$ such that $g(t)$ is $\eta^\prime$-close
  on $(\R^n\setminus B_{R_1}(0))× [0,T]$. Note that this implies
$$\sup_{\R^n\setminus B_{R_1}(0)}|g(t)-\delta| \leq \eta^\prime $$
for all $t$. By Lemma \ref{better decay estimate} we have that
$$ \big|\uph \nabla g\big| \leq \frac{c}
{t^{\frac{1}{2}+\frac{n}{4p}-\zeta}}$$
for $t>0$. Now fix $t_1>0$ such that 
$$\int_{t_1}^\infty \frac{c}
{t^{\frac{1}{2}+\frac{n}{4p}-\zeta}}\, dt \leq \frac{\eta}{2}\ .$$
By interpolation we deduce from Lemma
\ref{int est} that 
$$ \big|\uph \nabla g\big| \leq \frac{\delta(\eta^\prime)}{\sqrt{t}} 
\quad\text{on }\R^n\setminus B_{R_1}(0)$$ for $t>0$, and in particular
for $0< t \leq t_1$, where $\delta(\eta)\to 0$ as $\eta\to 0$.
Arguing as in Step 1 of the proof of Theorem \ref{diffeo thm} we
obtain the claimed estimate for some $R>R_1$.
\end{proof}

\section{Convergence Based on Integral Bounds}
\label{conv int bounds}
\begin{proof}[Proof of Theorem \ref{thm three}]
  In view of Corollary  \ref{int est} and interpolation
  inequalities of the form $\Vert \nabla v\Vert_{C^0}\leq c\cdot \Vert
  v\Vert_{C^0}\cdot\Vert v\Vert_{C^2}$, it suffices to prove that
  $g(t)\to h$ in $C^0$ as $t\to\infty$.
\par
Assume the conclusions were false. Then we could find
$\delta>0$ and points
$$(x_k,\,t_k,\,l_k)\in\R^n×(0,\,\infty)×\{1,\,
\ldots,\,n\},$$ 
$k\in\N$, such that $t_k\to\infty$
as $k\to\infty$ and 
$$|\lambda_{l_k}(x_k,\,t_k)-1|\geq4\delta.$$
By the construction of the solution $g(t)$ in Section
\ref{existence sec} from solutions ${}^ig$, we find sequences
$(i^k_j)_{j\in\N}$ with $i^k_j\to\infty$ as $j\to\infty$, such
that for the eigenvalues $({}^i\lambda_l)_{1\leq l\leq n}$ of
${}^ig$, we have
$$\left|{}^{i^k_j}\!\lambda_{l_k}(x_k,\,t_k)-1\right|\geq3\delta.$$
Given any $R>0$, Corollary \ref{int est} implies for
$k$ sufficiently large so that $t_k$ is
sufficiently large and for $j=j(k)$ sufficiently large so that
$\nu_k-|x_k|\equiv i^k_j-|x_k|$ is sufficiently large that
\begin{equation}\label{two delta approx bound}
\inf\limits_{x\in B_R(x_k)}\sup\limits_{1\leq l\leq n}
\left|{}^{{}^{\nu_k}}\!\lambda_l(x,\,t_k)-1\right|\geq2\delta.  
\end{equation}
The condition that $\nu_k-|x_k|$ is sufficiently large
ensures that the solutions ${}^{\nu_k}g$ 
are defined on all of $B_R(x_k)$.
Define
$${}^iI^{m,p}_\delta(t):=\frac1p\int\limits_{B_i(0)}
\left({}^i\phi_m+{}^i\psi_m-2n-\delta\right)^p_+\,dx,$$
where
$${}^i\phi_m(x,t):=\sum\limits_{l=1}^n
\frac1{{}^i\lambda_l^m(x,t)}\quad\text{and}\quad
{}^i\psi_m(x,t):=\sum\limits_{l=1}^n
{}^i\lambda_l^m(x,t).$$
By construction of the initial metrics ${}^ig_0$, we have for
the eigenvalues $(\lambda_l(x))_{1\leq l\leq n}$ of the
metric $g_0$ with respect to $h$ which we label such that 
$\lambda_1(x)\leq\ldots\leq\lambda_n(x)$ and similarly 
for the eigenvalues
${}^i\lambda_1(x)\leq\ldots\leq{}^i\lambda_n(x)$ of ${}^ig_0$
for $|x|\leq i$ the inequalities
$$\begin{cases}
\lambda_l(x)\leq{}^i\lambda_l(x)\leq1&\text{if }\lambda_l(x)\leq1,\\
1\leq{}^i\lambda_l(x)\leq\lambda_l(x)&\text{if }1\leq\lambda_l(x),
\end{cases}$$
which implies in particular the estimate $\left({}^i\lambda_l^m-1\right)^2
\leq\left(\lambda_l^m-1\right)^2$ for all $i,\,l,\,m\in\N$.
We deduce that
\begin{align*}
{}^i\phi_m+{}^i\psi_m-2n=&\,\sum\limits_{l=1}^n\frac1{{}^i\lambda_l^m}
\left({}^i\lambda_l^m-1\right)^2\umbruch\\
\leq&\,\sum\limits_{l=1}^n\frac{\lambda_l^m}{\lambda_l^m}
(1+\epsilon(n))^m\left(\lambda^m_l-1\right)^2\umbruch\\
\leq&\,(1+\epsilon(n))^{2m}\sum\limits_{l=1}^n
\frac1{\lambda_l^m}\left(\lambda_l^m-1\right)^2\umbruch\\
=&\,(1+\epsilon(n))^{2m}(\phi_m+\psi_m-2n).
\end{align*}
Thus
\begin{align}\label{iImpd0 endl}
\begin{split}
{}^iI^{m,p}_\delta(0)=&\,\frac1p\int\limits_{B_i(0)}
\left({}^i\phi_m+{}^i\psi_m-2n-\delta\right)^p_+\,dx\bigg|_{t=0}\\
\leq&\,(1+\epsilon(n))^{2m}\frac1p\int\limits_{B_i(0)}
\left(\phi_m+\psi_m-2n-\delta\right)^p_+\,dx\bigg|_{t=0}\\
\leq&\,(1+\epsilon(n))^{2m}\frac1p\int\limits_{\R^n}
\left(\phi_m+\psi_m-2n-\delta\right)^p_+\,dx\bigg|_{t=0}\\
=&\,(1+\epsilon(n))^{2m}I^{m,p}_\delta(0)<\infty
\end{split}
\end{align}
is uniformly bounded in $i$.
\par
The proof of Theorem \ref{int delta thm} extends directly to our
situation and ensures that 
$${}^iI^{m,p}_\delta(t)\leq{}^iI^{m,p}_\delta(0)\quad\text{for }t\geq0.$$ 
Here, $\left({}^i\phi_m+{}^i\psi_m-2n-\delta\right)_+$ is compactly
supported in $B_i(0)$ due to the boundary values imposed on ${}^ig$ in
Theorem \ref{existence thm (Dirichlet)}. Therefore, we do not have to
use the local closeness estimates of Lemma \ref{interiour fair} in
order to justify the integration by parts employed in the proof of
Theorem \ref{int delta thm}.
\par
Using \eqref{two delta approx bound} and
\eqref{iImpd0 endl}, we estimate
for $\epsilon$ and $m$ such that $(1+\epsilon(n))^m\leq2$
as follows
\begin{align*}
  (1+\epsilon(n))^{2m}I^{m,p}_\delta(0)\geq&\,
  {}^{{}^{\nu_k}}\!I^{m,p}_\delta(0)\umbruch\\
  \geq&\,{}^{{}^{\nu_k}}\!I^{m,p}_\delta(t_k)\umbruch\\
  =&\,\frac1p\int\limits_{B_{\nu_k}\!(0)}
  \left({}^{\nu_k}\phi(x,t_k)+{}^{\nu_k}\psi(x,t_k)
    -2n-\delta\right)^p_+\,dx\umbruch\\
  \geq&\,\frac1p\int\limits_{x\in B_R(x_k)}\left(\sum\limits_{l=1}^n
    \frac1{{}^{{}^{\nu_k}}\!\lambda_l^m(x,t_k)}
    \left({}^{{}^{\nu_k}}\!\lambda^m_l(x,t_k)-1\right)^2
    -\delta\right)_+^p\,dx
  \umbruch\\
  \geq&\,\frac1p\int\limits_{B_R(x_k)}\left(\frac1{(1+\epsilon(n))^m}
    4\delta-\delta\right)^p\umbruch\\
  \geq&\,\frac1p\int\limits_{B_R(x_k)}\delta^p\umbruch\\
  =&\,\frac{\delta^p}pc(n)R^n.
\end{align*}
As $R\geq0$ can be chosen arbitrarily large, we arrive at a
contradiction. This finishes the proof of Theorem \ref{thm three}.
\end{proof}

\begin{appendix}
  \section{Conformal Ricci Flow in Two Dimensions} 
  \label{2d sec}
  Studying Ricci flow on $\R^2$ for conformally flat metrics
  $g(x,t)=e^{u(x,t)}\delta$ is equivalent
  \cite{LaniWuRtwo,HamiltonTwo} to considering the evolution equation
\begin{equation}
  \label{two d ricci}
  \delt u=\Delta_gu=e^{-u}\Delta_\delta u \quad 
  \text{on }\R^2\times(0,\infty).
\end{equation}
We are especially interested in initial conditions $u_0:\R^2\to\R$
fulfilling
\begin{equation}
  \label{u0 decay}
  \sup\limits_{\R^2\setminus B_r(0)}|u_0|\to0\quad
  \text{as }r\to\infty. 
\end{equation}
We wish to point out that we do not assume in the following that $u_0$
fulfills an $\epsilon_0$-closeness condition for a small constant
$\epsilon_0>0$, corresponding to $\sup|u_0|$ being small. It suffices
to have $\sup|u_0|$ bounded. Note also that in two dimensions, we do
not have to study Ricci harmonic map heat flow first before we can
obtain results for Ricci flow.
\par
In this situation, we get the following existence result.
\begin{theorem}\label{two d exist thm}
  Let $u_0\in C^0\left(\R^2\right)$ satisfy $\Vert
  u_0\Vert_{L^\infty}<\infty$. Then there exists $u\in
  C^\infty(\R^2\times(0,\infty))$ solving \eqref{two d ricci} such
  that $u(\cdot,t)\to u_0$ in $C^0_{\text{\it{loc}}}\left(\R^2\right)$
  as $t\searrow0$. As $t\to\infty$, $u(\cdot,t)$ converges
  subsequentially in $C^\infty_{\it{loc}}$ to a complete flat metric.
\end{theorem}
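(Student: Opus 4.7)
The strategy follows the existence scheme of Sections \ref{existence sec}--\ref{int est sec}, adapted to the scalar quasilinear equation \eqref{two d ricci}. First I would mollify $u_0$ to obtain smooth approximations $u_0^i:\R^2\to\R$ with $\|u_0^i\|_{L^\infty}\le\|u_0\|_{L^\infty}$ and $u_0^i\to u_0$ locally uniformly, and solve \eqref{two d ricci} on $B_i(0)$ with initial data $u_0^i$ and suitable time-independent Dirichlet boundary data. Classical theory for uniformly parabolic quasilinear equations produces smooth solutions $u^i$ on $B_i(0)\times[0,\infty)$.

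The maximum principle applied to \eqref{two d ricci} yields $\|u^i(\cdot,t)\|_{L^\infty}\le\|u_0\|_{L^\infty}$ uniformly in $i$ and $t\ge 0$: at a spatial interior maximum of $u^i$, $\Delta_\delta u^i\le 0$ forces $\partial_t u^i\le 0$, and analogously at a minimum. Consequently $e^{-u^i}$ is bounded uniformly above and below, so \eqref{two d ricci} is uniformly parabolic with bounds depending only on $\|u_0\|_{L^\infty}$. Krylov--Safonov followed by parabolic Schauder theory then yields interior estimates
\[\Big|(\gradh)^k u^i(x,t)\Big|\le\frac{c(k,\|u_0\|_{L^\infty})}{t^{k/2}}\]
on compact subsets of $B_i(0)\times(0,\infty)$, and a diagonal subsequence extracts a smooth solution $u\in C^\infty(\R^2\times(0,\infty))$ to \eqref{two d ricci}.

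To prove the initial trace $u(\cdot,t)\to u_0$ in $C^0_{\text{\it{loc}}}$, I would adapt the local closeness argument of Lemma \ref{interiour fair}. Fixing $x_0\in\R^2$ and a radius $R$, one forms the compactly supported quantity $\zeta:=\eta\,(u^i(x,t)-u_0^i(x_0))^2$ with $\eta\equiv1$ on $B_{R/2}(x_0)$, $\eta\in C_c^\infty(B_R(x_0))$, and derives a parabolic differential inequality of the schematic form $\partial_t\zeta\le e^{-u^i}\Delta_\delta\zeta+c$. The resulting maximum principle then forces $\sup_{B_{R/2}(x_0)}|u^i(\cdot,t)-u_0^i(x_0)|\to 0$ as $t\searrow 0$ uniformly in $i$; combined with the locally uniform convergence $u_0^i\to u_0$ and the continuity of $u_0$ at $x_0$, this gives $u(\cdot,t)\to u_0$ in $C^0_{\text{\it{loc}}}$.

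Finally, the long-time statement follows directly from the interior estimates: for every $k\ge 1$, $|(\gradh)^k u(x,t)|\le c(k)\,t^{-k/2}\to 0$ as $t\to\infty$ on compact sets, while $\|u(\cdot,t)\|_{L^\infty}$ stays bounded. Arzel\`a--Ascoli then produces, along any sequence $t_j\to\infty$, a subsequence along which $u(\cdot,t_j)$ converges in $C^\infty_{\text{\it{loc}}}$ to a function $u_\infty$ whose spatial derivatives all vanish; hence $u_\infty$ is constant and $e^{-u_\infty}\delta$ is a complete flat metric. The main technical obstacle is the attainment of the merely $C^0$ initial data, as the derivative estimates degenerate at $t=0$; the remainder reduces to standard quasilinear parabolic theory applied to an equation whose uniform parabolicity is preserved for free by the maximum principle.
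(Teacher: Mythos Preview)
Your proposal is correct and follows essentially the same scheme as the paper's own ``Idea of Proof'': mollify $u_0$, obtain smooth solutions with the uniform $L^\infty$ bound $|u^i|\le\sup|u_0|$ via the maximum principle, derive interior estimates $|\gradh^k u^i|\le c_k\,t^{-k/2}$ through Krylov--Safonov and Schauder theory together with parabolic scaling, pass to a subsequential limit, recover the initial trace by the cutoff argument of Lemma~\ref{interiour fair} applied to $\psi^2=(u^i-u^i_0(x_0))^2$, and use Arzel\`a--Ascoli for the long-time statement.

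The only substantive difference is in how existence for the smooth approximants is obtained. The paper invokes Wu's result \cite{LaniWuRtwo} to produce, for each smooth bounded $u_0^i$, a global-in-space solution $u^i$ on $\R^2\times[0,\infty)$ directly. You instead solve Dirichlet problems on the exhausting balls $B_i(0)$ and pass to the limit, mirroring the higher-dimensional construction of Section~\ref{existence sec}. Both routes are valid; yours is more self-contained but requires a word about compatibility of the initial and boundary data (e.g.\ cutting off $u_0^i$ to vanish near $\partial B_i$, as done with the metrics $\upi g_0$ in Theorem~\ref{existence thm}), so that the corner condition is met and the maximum principle still yields $\|u^i(\cdot,t)\|_{L^\infty}\le\|u_0\|_{L^\infty}$.
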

\begin{proof}[Idea of Proof]
  Mollifying $u_0$, we get a sequence $u^i_0\in
  C^\infty\left(\R^2\right)$, $i\in\N$, such that $u^i_0\to u_0$ in
  $C^0_{\text{\it{loc}}}$ as $i\to\infty$, $\left\Vert
    u^i_0\right\Vert_{L^\infty}\le\Vert u_0\Vert_{L^\infty}$, and
  $\left\Vert D^2u^i_0\right\Vert_{L^\infty}+\left\Vert
    Du^i_0\right\Vert_{L^\infty}\le c_i$. According to
  \cite{LaniWuRtwo}, there exist smooth solutions $u^i$ for $t\ge0$
  solving \eqref{two d ricci} with $u^i(\cdot,0)=u^i_0$ such that
  $\left|u^i(x,t)\right|\le\sup|u_0|$. The techniques of
  Krylov-Safonov, Schauder, and scaling imply
  $\left|D^ku^i\right|\le\frac{c_k}{t^{k/2}}$. Thus we find a smooth
  solution $u$ to \eqref{two d ricci}. Considering
  $\psi=u^i(x,t)-u^i(x_0)$, we can argue as in Lemma \ref{interiour
    fair}  (using $\psi^2$ instead of $\phi_m+\psi_m-2n$) to
  see that for any $\delta>0$ we find $i_0=i_0(\delta,u_0)$ and
  $\zeta>0$ such that $\left|u^i(x,t)-u^i(x_0)\right|\le\delta$ for
  $|x-x_0|^2+t\le \zeta$ and $i\ge i_0$. Thus $u(\cdot,t)\to u_0$ in
  $C^0_{\text{\it{loc}}}$  as $t\searrow0$. Note that in
  the case of \eqref{u0 decay}, this convergence is uniform in
  space. The remaining details are similar to the higher
  dimensional case.
\end{proof}

\begin{theorem}
  Let $u_0$ be as in Theorem \ref{two d exist thm} and assume that
  \eqref{u0 decay} is fulfilled. Let $u$ be the solution to \eqref{two
    d ricci} obtained there. Then $u(\cdot,t)\to0$ in $C^\infty$ 
as $t\to\infty$.
\end{theorem}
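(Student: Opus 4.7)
The plan is to mirror the higher-dimensional argument. The natural replacement for $\phi_m+\psi_m-2n$ in the conformal setting is the quantity
\[
\Phi(x,t):=e^{u(x,t)}+e^{-u(x,t)}-2\ge 0,
\]
which equals $(e^{u/2}-e^{-u/2})^2$ and hence vanishes precisely when $u=0$. Since $\|u(\cdot,t)\|_{L^\infty}\le\|u_0\|_{L^\infty}$ by the maximum principle applied to \eqref{two d ricci}, $\Phi$ is comparable to $u^2$ with uniform constants. A direct computation using $\partial_t u=e^{-u}\Delta u$ yields
\[
\partial_t\Phi \;=\; e^{-u}\Delta\Phi \;-\; e^{-u}(e^u+e^{-u})\,|\nabla u|^2 \;\le\; e^{-u}\Delta\Phi - c\,|\nabla u|^2,
\]
where $\nabla$ and $\Delta$ are Euclidean and $c=c(\|u_0\|_{L^\infty})>0$.

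First I would establish the 2D analogue of Lemma \ref{interiour fair}: for $u^i$ as in the proof of Theorem \ref{two d exist thm}, multiplying $\Phi$ by a Euclidean cut-off $\eta$ supported in $B_R(x_0)$ and using the above inequality together with $|\nabla\Phi|^2\le c\,|\nabla u|^2$, the maximum principle gives $\sup_{B_{R/2}(x_0)\times[0,\gamma R^2]}|u|\le 2\sup_{B_R(x_0)}|u_0|$ for some $\gamma=\gamma(\|u_0\|_{L^\infty})>0$. Combined with the decay hypothesis \eqref{u0 decay}, this shows that for each fixed $T<\infty$ we have $u(x,t)\to 0$ as $|x|\to\infty$ uniformly in $t\in[0,T]$. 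The interior estimates $|\nabla^k u(x,t)|\le c_k/t^{k/2}$ from the proof of Theorem \ref{two d exist thm} will also be needed and are already available.

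Next, I would prove the 2D monotonicity analogue of Theorem \ref{int delta thm}: for every $\delta>0$ and $p\ge 1$,
\[
J_\delta(t):=\frac{1}{p}\int_{\R^2}(\Phi-\delta)_+^p\,dx
\]
is non-increasing in $t$. The previous step ensures that the integrand has compact support at each $t$, and since $\Phi$ is uniformly bounded, $J_\delta(t)$ is finite. Multiplying the differential inequality for $\Phi$ by $(\Phi-\delta)_+^{p-1}$, integrating over $\R^2$, and integrating by parts exactly as in the proof of Theorem \ref{int delta thm} (Sard's theorem to handle the boundary of $\{\Phi>\delta\}$ for a.e.\ $\delta$; approximation in $\delta$ afterwards) produces a nonpositive right-hand side because of the good $-c|\nabla u|^2$ term and the bound on $\nabla\Phi$ in terms of $\nabla u$.

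Finally, I would run the contradiction argument of Lemma \ref{lambda decay no rate}. If $\sup_{\R^2}|u(\cdot,t)|\not\to 0$, pick $(x_k,t_k)$ with $t_k\to\infty$ and $|u(x_k,t_k)|\ge 4\zeta$ for some $\zeta>0$. The gradient estimate $|\nabla u(x,t)|\le c_1/\sqrt{t}$ shows that $|u(\cdot,t_k)|\ge 2\zeta$ on the ball $B_{R_k}(x_k)$ with $R_k:=\zeta\sqrt{t_k}/c_1$; therefore $\Phi(\cdot,t_k)\ge c_0\zeta^2$ on that ball for some $c_0>0$. Choosing $\delta:=c_0\zeta^2/2$ yields
\[
J_\delta(t_k)\ \ge\ \frac{1}{p}\,(c_0\zeta^2/2)^p\,\pi R_k^2\ =\ C(\zeta)\, t_k\ \longrightarrow\ \infty,
\]
contradicting the monotonicity $J_\delta(t_k)\le J_\delta(0)<\infty$. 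Hence $\sup_{\R^2}|u(\cdot,t)|\to 0$, and the $C^\infty$ convergence then follows immediately from the interior estimates $|\nabla^k u|\le c_k/t^{k/2}$ combined with the standard interpolation bounds $\|\nabla^k u\|_{C^0}\le c\,\|u\|_{C^0}^{\theta}\|u\|_{C^{2k}}^{1-\theta}$.

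The main obstacle is the integration-by-parts justification in the monotonicity step, exactly as in Theorem \ref{int delta thm}; however the decay proved in the first step guarantees compact support of $(\Phi-\delta)_+^p$ on every bounded time interval, so the argument of Section \ref{integral est sec} applies essentially verbatim. The other computations (evolution of $\Phi$ and the ball-volume lower bound) are straightforward because $n=2$ makes $R_k^n=R_k^2\sim\zeta^2 t_k$, which still blows up.
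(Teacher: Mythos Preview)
Your proposal is correct and follows the same overall architecture as the paper's proof---interior closeness at spatial infinity for bounded times, an integral monotonicity, and the ball-growth contradiction of Lemma~\ref{lambda decay no rate}. The only genuine difference is the choice of monotone quantity. You use $\Phi=e^{u}+e^{-u}-2$, the literal specialization of $\phi_m+\psi_m-2n$ to the conformal setting, and show $J_\delta(t)=\tfrac1p\int(\Phi-\delta)_+^p$ is non-increasing for every $p\ge1$. The paper instead works with $u$ directly: for $p\ge\sup|u_0|+1$ one integration by parts gives
\[
\frac{d}{dt}\int_{\R^2}\tfrac1p(u-\delta)_+^p
=\int_{\{u>\delta\}}(u-\delta)^{p-2}\bigl((u-\delta)-(p-1)\bigr)e^{-u}|\nabla u|^2\le0,
\]
and the negative part of $u$ is handled symmetrically. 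Your route has the advantage that it treats both signs of $u$ simultaneously and needs no size condition on $p$; the paper's route is a line shorter because it skips the computation of $\partial_t\Phi$ and exploits the fact that in the conformal case one can manipulate $u$ itself rather than a nonlinear function of it. After this step the two arguments coincide.
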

\begin{proof}[Idea of Proof]
  The proof is similar to the proof of Theorem \ref{thm conv}. Instead
  of Theorem \ref{int delta thm}, however, we use the following
  estimate for $p\ge\sup|u_0|+1$ (and a similar argument for the
  negative part of $u$)
  \begin{align*}
    \dt\int\limits_{\R^2}\tfrac1p(u-\delta)_+^p\,dx=
    &\,\int\limits_{\{u>\delta\}}(u-\delta)^{p-1}e^{-u}\Delta_\delta
    u\,dx\\
    =&\,\int\limits_{\{u>\delta\}}(u-\delta)^{p-2}
    ((u-\delta)-p+1)e^{-u}|\nabla u|^2\,dx\le0.
  \end{align*}
\end{proof}

\end{appendix}

\bibliographystyle{amsplain}
\def\weg#1{} \def\cprime{$'$}
\providecommand{\bysame}{\leavevmode\hbox to3em{\hrulefill}\thinspace}
\providecommand{\MR}{\relax\ifhmode\unskip\space\fi MR }
\providecommand{\MRhref}[2]{%
  \href{http://www.ams.org/mathscinet-getitem?mr=#1}{#2}
}
\providecommand{\href}[2]{#2}

\end{document}